\begin{document}

\author{Daniel Smania }

\address{Departamento de Matem\'atica \\
   ICMC/USP - S\~ao Carlos \\
               Caixa Postal 668 \\ S\~ao Carlos-SP \\ CEP 13560-970 \\ Brazil.}
\email{smania@icmc.usp.br} \urladdr{\href{http://www.icmc.usp.br/pessoas/smania/}{www.icmc.usp.br/pessoas/smania/}}

\date{\today}
\title[Shy shadows of $\infty$-dim. partially hyperbolic invariant sets]{ 
Shy shadows of infinite-dimensional partially hyperbolic invariant sets}

\begin{abstract}   Let $\mathcal{R}$ be a  strongly compact $C^2$ map defined in an open subset of an infinite-dimensional  Banach space  such that  the image of its derivative $D_F \mathcal{R}$ is dense for  every $F$. Let $\Omega$ be a compact,  forward invariant and partially hyperbolic set of $\mathcal{R}$  such that $\mathcal{R}\colon \Omega\rightarrow \Omega$ is onto. The $\delta$-shadow $W^s_\delta(\Omega)$ of $\Omega$ is the union of the sets 
$$W^s_\delta(G)= \{F\colon dist(\mathcal{R}^iF,  \mathcal{R}^iG) \leq \delta, \ for \ every \  i\geq 0  \},$$
where $G \in \Omega$.  Suppose that $W^s_\delta(\Omega)$ has transversal empty interior, that is,   for every   $C^{1+Lip}$ $n$-dimensional  manifold $M$  transversal   to the distribution of dominated directions of $\Omega$  and sufficiently close to $W^s_\delta(\Omega)$ we have that $M\cap W^s_\delta(\Omega)$ has empty interior in $M$. Here $n$ is the finite dimension of the strong unstable direction. We show that  if $\delta'$ is small enough then $$\cup_{i\geq 0}\mathcal{R}^{-i}W^s_{\delta'} (\Omega)$$ intercepts a  $C^k$-generic  finite dimensional curve inside the Banach space in a set of parameters with zero Lebesgue measure, for every $k\geq 0$.  This extends to infinite-dimensional dynamical systems previous  studies  on  the Lebesgue measure of stable laminations of invariants sets.  \end{abstract}

\subjclass[2010]{37D30,37D20,  37L45, 46T20,   37D25} \keywords{  null set, shy set, prevalence, prevalent, haar null set, horseshoe,  infinite dimension,  invariant set, infinite-dimensional dynamical system, partially hyperbolic, hyperbolic, compact map, Banach space, invariant cones, dense image, dominated splitting}

\thanks{D.S. was partially supported by CNPq 470957/2006-9, 310964/2006-7,  472316/03-6,  303669/2009-8, 305537/2012-1, 307617/2016-5 and FAPESP 03/03107-9,  2008/02841-4, 2010/08654-1.}

\maketitle
\newcommand{\co}{\mathbb{C}}
\newcommand{\incl}[1]{i_{U_{#1}-Q_{#1},V_{#1}-P_{#1}}}
\newcommand{\inclu}[1]{i_{V_{#1}-P_{#1},\co-P}}
\newcommand{\func}[3]{#1\colon #2 \rightarrow #3}
\newcommand{\norm}[1]{\left\lVert#1\right\rVert}
\newcommand{\norma}[2]{\left\lVert#1\right\rVert_{#2}}
\newcommand{\hiper}[3]{\left \lVert#1\right\rVert_{#2,#3}}
\newcommand{\hip}[2]{\left \lVert#1\right\rVert_{U_{#2} - Q_{#2},V_{#2} -
P_{#2}}}
\newtheorem{prop}{Proposition}[section]

\newtheorem{lem}[prop]{Lemma}

\newtheorem{rem}[prop]{Remark}

\newtheorem*{mth}{Main Theorem}
\newtheorem{thm}{Theorem}

\newtheorem{cor}[prop]{Corollary}

\setcounter{tocdepth}{1}
\tableofcontents

\section{Introduction.}

In many  areas of Mathematics, when we need to classify/study  objects in a large family, it is quite often the case that one can not understand {\it all} objects in this family, but just {\it most}  of them.  If this family of objects  is an open subset of  a Banach space (or a Banach manifold), as for instance in the study of the typical behaviour of a large family of smooth discrete dynamical systems,  the meaning of  ``most''   it is not at all obvious, since   infinite dimensional spaces do not carry  a natural class of regular measures (in  finite dimensional linear spaces  the  class of Lebesgue measures  is  such class). Sometimes by ``most''  one mean either an open and dense set or a residual set. But even in the finite dimensional case those are not equivalent with  full Lebesgue measure sets.  Moreover  it may happens   in  the study of dynamical systems that the topological generic behaviour does not coincide with the typical measure-theoretical   behaviour. See  Hunt, Sauer and Yorke \cite{hunt} and Hunt and Kaloshin \cite{hk}   for a large number of examples.

In dynamical systems  it came  into  favor the idea suggested by   Kolmogorov \cite{kolmo}  that  it is good enough  to understand the dynamical behaviour  at {\it  almost every parameter}  in a smooth finite-dimensional family of dynamical systems that belongs  to a {\it residual set of families}. This approach was very  successful, specially in one-dimensional dynamics. See for instance the  Palis'  conjectures  \cite{palis} \cite{palis2},  the results in Avila, Lyubich and de Melo \cite{alm} and  Avila and Moreira \cite{am} and the  survey by Hunt and Kaloshin \cite{hk} on prevalence. Kaloshin \cite{k}(see also \cite{hk}) defined a {\it nonlinear prevalence}  on $C^r(M,N)$, where $M$ and $N$ are manifolds, in the spirit of  Kolmogorov's suggestion. 

It turns out that to understand the behaviour of most one-dimensional dynamical systems it is  often necessary  (Avila, Lyubich and de Melo \cite{alm}) to understand the dynamics of a highly non-linear, complex analytic compact operator acting on a Banach space of complex analytic dynamical systems, the  {\it renormalization operator}. See Lyubich  \cite{lyu} and  de Faria, de Melo and Pinto\cite{fma} for the unimodal case, and   \cite{sm} for the definition of renormalization in the multimodal case. The action of this operator acting on unimodal maps is hyperbolic on  its omega limit set  \cite{lyu} \cite{fma}. The stable lamination of this omega limit set consists of  all infinitely renormalizable maps. A family  of dynamical systems is a   finite-dimensional smooth curve inside this Banach space, so if we want to know how large is the set of  parameters in this family that corresponds to infinitely renormalizable maps, we need to know how such  curve intercepts the stable lamination. So the typical  behaviour in the {\it parameter } space of  one-dimensional  multimodal maps is closely connected with the typical behaviour  in the infinite-dimensional  {\it phase} space of  the renormalization operator. 

In the case of renormalization theory of unimodal maps, it was proven by  Avila, Lyubich and de Melo \cite{alm} that for  typical $1$-dimensional  real-analytic family of unimodal maps the set of parameters corresponding to infinitely renormalizable maps  has zero Lebesgue measure. An essential step of the proof is to show  that   the stable lamination extends to a codimension-one complex analytic  lamination of the whole space of of maps (except maybe for a few maps with very  specific combinatorics)  and as a consequence the holonomy of this lamination  is a quasisymmetric map. This quite special regularity  can be exploited to conclude the result.  Indeed, using that  the stable lamination has codimension  one  de Faria, de Melo and Pinto\cite{fma} proved that the holonomy of the stable lamination  is  $C^{1+\epsilon}$. The renormalization operator  of multimodal maps has a   stable lamination with  higher codimension and such approach does not seem to be applicable in this case, so we need new tools.  

Questions about  the measure of invariant laminations started with  the work of Bowen  \cite{bowen} (see also Bowen and Ruelle \cite{br} for a similar  result for flows), that proved  that the stable lamination of  a basic hyperbolic set of a $C^2$ diffeomorphism in a finite-dimensional manifold  has zero Lebesgue measure  if and only if it has empty interior. That result was generalized by Alves and Pinheiro \cite{ap} for horseshoe-like partially hyperbolic invariant sets. To the best of our knowledge, there is no result of this kind in the literature for infinite dimensional dynamical
systems. The literature for measure-theoretical/ergodic  theory of infinite-dimensional maps as partially hyperbolic maps considered here is indeed   scarce. We should mention the  work of Ma\~ne \cite{mane}, where he extends  Oseledec theorem to  compact smooth maps acting on Banach spaces   and the works of  Lian and Young \cite{ly1} \cite{ly2} and Lian, Young and Zeng \cite{lyz} on infinite-dimensional dynamical systems.

We will show that for certain  compact  partially hyperbolic set  on infinite dimensional  Banach spaces a generic curve intercepts  the  stable lamination in a  set of zero Lebesgue measure. In \cite{smania2} we use this result to show that in a generic finite-dimensional family of real-analytic multimodal maps the set of parameters of maps that are infinitely renormalizable with bounded combinatorics has zero Lebesgue measure.  However this   does not exhaust the applications of our results. Indeed, one can expect their application in other fields, as the study of dissipative  PDEs.

We  adopted an elementary approach, both in methods and statements of the results. In particular  we did  not need to prove (or to use previous results on) the existence of invariant laminations for  a partially hyperbolic invariant set. 

There are many   difficulties  dealing with the general Banach space setting. Firstly, we need to conceive a  notion of ``thin'' set that could be suitable and useful in this setting, since  the notion of ``zero Lebesgue measure'' does not make sense anymore. There are many notions for  ``thin'' sets in Banach spaces.  Haar null sets (shy sets) were introduced by Christensen \cite{haarnull} and  reintroduced by Hunt, Sauer and Yorke \cite{hunt}.   J. Lindenstrauss and D. Preiss \cite{lind}  introduced the  $\Gamma$-null sets in the study of the generalization to the Banach space setting of Rademacher's  Theorem on the almost everywhere differentiability of Lipchitz functions.     There are also stronger notions of null sets, as cubic, Gaussian and Aronszajn null sets. Those stronger notions coincide in separable Banach spaces, see Cs{\"o}rnyei \cite{coincide}. We are going to  introduce  the concept of $\Gamma^k$-null set (see Section \ref{nullsection}), that somehow unifies  Kolmogorov's  idea and  Lindenstrauss \& Preiss's  approaches in the abstract setting of Banach spaces rather than  $C^r(M,N)$. Indeed even when $C^r(M,N)$ is a Banach space  the  set of smooth curves  in $C^r(M,N)$ considered by  Kaloshin, as well the topology on this set,  are  somehow different of  those used here. 

Secondly, we need of a ``strong compactness''  assumption (see Section \ref{strong_c}) to prove the pre-compactness of a sequence of  curves transversal to the central stable direction  obtained iterating our dynamical system (and cutting) on a given transversal curve.   This is similar to the   graph method approach for the proof of the Stable Manifold Theorem and the $\lambda$-Lemma.  The lack of contraction in the center-stable direction  does not allow us to prove the contraction of this "graph map"-like process, however the pre-compactness will be enough to our purposes. Such pre-compactness is obtained in Section \ref{action_t} with  careful estimates using both ``strong'' $|\cdot|_1$  and ``weak'' $|\cdot|_0$ norms that appears in the ``strong compactness''  assumption.  Such a difficulty does not appear in the previous finite dimensional results, since  in this setting every $C^2$ map is obviously strongly compact. 

Finally, as usual in smooth ergodic theory, it is essential to be able to control how measures on finite-dimensional curves are perturbed by  the iteration of the dynamics. This is sometimes called ``bounded distortion control'' and in the finite-dimensional setting the jacobian and the classical formulas for  change of variables of integrals plays a crucial role to obtain these estimates. In the Banach space setting we need to consider Lebesgue measures on finite-dimensional smooth curves inside the Banach space. There is not a canonical way to do this. One may prefer either the  full-dimensional Haussdorf measure induced by  the norm of the Banach space restricted to such curve in certain situations, that has the advantage of being  coordinate-free,  or a Lebesgue measure induced by some ad-hoc inner product on the (finite-dimensional) parameter space of the curve, that could be more handy for  bounded distortion estimates. All these measures will play a role in the proof of the main result and we need to be careful when we change the measures on consideration along our arguments. This is done in Section \ref{lower_bound} and Section \ref{dballs}.

\section{Statement of the main results.} \label{strong_c}

Let $\mathcal{B}_0$ be a  Banach space, either real or complex, with norm $|\cdot|_0$.  We say that a  $C^2$ {F}r\'echet  differentiable  map  $$\mathcal{R}\colon V \rightarrow \mathcal{B}_0,$$
where $V \subset \mathcal{B}_0$ is an open set, is a {\bf strongly compact  $C^2$  map} if  
\begin{itemize}
\item[A.] There is a subspace $\mathcal{B}_1\subset \mathcal{B}_0$, that endowed with a norm $|\cdot|_{1}$ is a Banach space,  such that  the inclusion 
$$i\colon (\mathcal{B}_1, |\cdot|_1) \mapsto (\mathcal{B}_0, |\cdot|_0)$$
is a compact linear map, and 
\item[B.] We can write 
$$\mathcal{R}= i\circ \tilde{\mathcal{R}},$$
where $\tilde{\mathcal{R}}$ is a  $C^2$ {F}r\'echet  differentiable  map  
$$\tilde{\mathcal{R}} \colon (V, |\cdot|_0) \rightarrow (\mathcal{B}_1, |\cdot|_1).$$
\end{itemize}
Note that if $\mathcal{B}_0$ has infinite dimension then $\mathcal{R}$  is not  a diffeomorphism. 
 
We will say that a forward $\mathcal{R}$-invariant compact subset  $\Omega \subset \mathcal{B}_1$ is a {\bf partially hyperbolic set}  of $\mathcal{R}$ if there are  two continuous $\mathcal{R}$-invariant distributions of subspaces of $\mathcal{B}_0$, the {\bf horizontal directions}
$$G \in \Omega \mapsto E^{h}_G,$$
 and  the  {\bf unstable directions } 
$$G \in \Omega\mapsto E^u_G,$$
such that $\mathcal{B}_0= E^h_G \oplus E^u_G$  and moreover 
\begin{itemize}
\item[A1.] There exist $n_0$,  $C > 0$, $\delta_1> 0$ and $\lambda_G > 0$ such that if  $ v\in E^{h}_G$ then  
$$|D_F\mathcal{R}^{n_0}\cdot v|_{0}\leq \lambda_G  |v|_{0}.$$
\item[A2.] There exists $\theta > 1$ and  for every $G \in \Omega$  there exists $\theta_G > \theta \max \{ 1,  \lambda_G \}$ 
$$|D_F\mathcal{R}^{n_0}\cdot v|_{0}\geq  \theta_G  |v|_{0}. $$ 
\end{itemize}

For each $G \in \Omega$, define the $\delta$-shadow of $G$ as
$$W^s_\delta(G)= \{F \in V \colon \ dist_{\mathcal{B}_0}(\mathcal{R}^k F,  \mathcal{R}^k G)\leq \delta, \ for \ every \ k\geq 0 \}.$$ 
and the $\delta$-shadow of $\Omega$ as
$$W^s_\delta(\Omega)=\cup_{G \in \Omega} W^s_\delta(G).$$
It is easy to see that $W^s_\delta(\Omega)$ is a closed subset of $V$ (in $\mathcal{B}_0$ topology). 

Since $\mathcal{R}$ is a compact map we have that $E^u_F$ has finite dimension. Without loss of generality we are going to assume that $n= \dim_{\mathbb{R}} E^u_G$ does not depend on $G \in \Omega$. We say that $W^s_\delta(\Omega)$ has {\bf real transversal empty interior},  if   there is $\delta' > 0$ with the following property. For every embedded  {\it real } $C^{1+Lip}$  $n$-dimensional manifold  $M \subset V$ satisfying 
$$diam \ M + dist_{\mathcal{B}_0}(M,G) < \delta'$$
for some $G \in \Omega$ and such that $M$ is transversal to $E^h_G$ ($T_x M \oplus  E^h_G=\mathcal{B}_0$ for every $x \in M$)  we have that $M\cap W^s_\delta(\Omega)$ has empty interior in $M$. If $\mathcal{B}_0$ is a complex Banach space and $\mathcal{R}$ is complex analytic with $n= \dim_{\mathbb{C}} E^u_G$ then we say that  $W^s_\delta(\Omega)$ has {\bf complex  transversal empty interior} if there exists some $\delta' > 0$ such that for every embedded  {\it complex analytic }  $n$-dimensional manifold  $M \subset V$ the above property holds.

Using a notation similar to  J. Lindenstrauss and D. Preiss \cite{lind} \cite{lind2}, let $\mathcal{B}$ be a Banach space. Consider   $I=[-1,1]$ with the normalized Lebesgue measure $m(a,b)=(b-a)/2$. Endow $T=I^{\mathbb{N}}$ with the product topology and the product Lebesgue measure $m$. Let $\Gamma^k(\mathcal{B})$, $k \in \mathbb{N}\cup \{\infty\}$,  be the set of all continuous functions 
$$\gamma\colon T \mapsto \mathcal{B}$$  with continuous partial derivatives
$$\lambda\in T \mapsto \partial^j_{i_1 i_2 \cdots i_j} \gamma(\lambda)$$
for every $j\leq k$, with  $i_1, \dots,i_j \in \mathbb{N}$ and $\lambda=(\lambda_i)_{i\in \mathbb{N}} \in T $ satisfying $\lambda_{i_p}  \in (-1,1)$ for every  $p\leq j$. Moreover the partial derivatives extends continuously to a function in $T$. The pseudo-norms given  by the supremum of $\gamma$ and its partial derivatives up to order $k$ on $T$ endow $\Gamma^k(\mathcal{B})$ with the structure of a  Fr\'echet space. We have that $\Gamma^k(\mathcal{B})$ is  a Polish space, and consequently a Baire Space. Note that $\Gamma^1(\mathcal{B})$ coincides with $\Gamma(\mathcal{B})$ as defined in \cite{lind}. 

If $\mathcal{B}$ is a complex  Banach space, we will denote by  $C^{\omega}(\overline{\mathbb{D}}^j,\mathcal{B})$ the set of complex  analytic functions 
$$\gamma\colon \mathbb{D}^j \rightarrow \mathcal{B},$$ 
that have  a continuous extension to $\overline{\mathbb{D}}^j$. Endowed with the sup norm on $\overline{\mathbb{D}}^j$ we have that $C^{\omega}(\overline{\mathbb{D}}^j,\mathcal{B})$ is a complex Banach space. 

If $\mathcal{B}$ is  a complex Banach space, we want to consider $\Gamma^{\omega}(\mathcal{B})$. To do this, replace in the definition of $T$ the interval $[-1,1]$ by $\overline{\mathbb{D}}=\{ z \in \mathbb{C}\colon \  |z|\leq 1\}$ endowed with the normalized Lebesgue measure, obtaining $T_{\mathbb{C}}$. Then $\Gamma^{\omega}(\mathcal{B})$ is the space of all continuous functions $\gamma\colon T_{\mathbb{C}} \mapsto \mathcal{B}$ that are holomorphic when we fix all except a finite number of entries of $\lambda \in T_{\mathbb{C}}$ and $|\lambda_i| < 1$ for the remaining ones. The sup norm on $\gamma$ turns $\Gamma^{\omega}(\mathcal{B})$ into a complex Banach space. 

Finally we would like to consider real analytic families  into a real Banach space  $\mathcal{B}$. Denote by $\mathcal{B}_\mathbb{C}$ the  complex Banach space that is a complexification of $\mathcal{B}$ endowed with   a {\it desirable} norm as defined in Kirwan \cite{desirable} (see also Mu{\~n}oz,  Sarantopoulos and  Tonge \cite{normcomp}). We will denote by  $C^{\omega_{\mathbb{R}}}([-1,1]^j,\mathcal{B})$ the set of real analytic functions (see Bochnak and Siciak \cite{complex}).
$$\gamma\colon (-1,1)^j \rightarrow \mathcal{B}$$ 
that can be extended to a complex analytic function  
$$\gamma\colon \mathbb{D}^j \rightarrow \mathcal{B}_\mathbb{C},$$ 
and moreover $\gamma$ has a continuous extension to $\overline{\mathbb{D}}^j$. Endowed with the sup norm on $\overline{\mathbb{D}}^j$ we have that $C^{\omega_{\mathbb{R}}}([-1,1]^j,\mathcal{B})$ is a real Banach space.  

We define $\Gamma^{\omega_{\mathbb{R}}}(\mathcal{B})$ as the {\it real } Banach space  that consists of the restrictions to $T=[-1,1]^{\mathbb{N}}$ of the  functions $\gamma \in \Gamma^{\omega}(\mathcal{B}_{\mathbb{C}})$ that satisfy $\gamma(\overline{\lambda})= \overline{\gamma(\lambda)}$.

Our first main result is

\begin{thm}\label{main} Let $\mathcal{R}$ be a strongly compact $C^2$ map  on a real Banach space $\mathcal{B}_0$ with a compact invariant partially hyperbolic set $\Omega$  such that $\mathcal{R}\colon \Omega\rightarrow \Omega$ is onto. If  \begin{itemize}
\item[A.] There is $\delta > 0$   such that $W^s_\delta(\Omega)$ has  real transversal empty interior,
\item[B.]  For every $i\geq 0$ and every $ x\in \mathcal{R}^{-i}W^s_\delta(\Omega)$ we have that $D_x \mathcal{R}^i$ has dense image,
\end{itemize}
then there exists $\delta_5 >0$ such that $\cup_{i\in \mathbb{N}} \mathcal{R}^{-i} W^s_{\delta_5}(\Omega)$ is a  $\Gamma^k(\mathcal{B}_0)$-null set, for every $k\in \mathbb{N}\cup\{\infty, \omega_{\mathbb{R}}\}$. Indeed  for every $j$ and $k\in \mathbb{N}\cup\{\infty, \omega_{\mathbb{R}}\}$   there exists a residual set of  $C^k$  maps 
$$\gamma\colon [-1,1]^j\rightarrow \mathcal{B}_0$$ such that 
$$m(t \in [-1,1]^j \colon \ \gamma(t) \in \cup_{i\in \mathbb{N}} \mathcal{R}^{-i} W^s_{\delta_5}(\Omega))=0.$$
Here $m$ denotes the normalized Lebesgue measure on $[-1,1]^j$.  \end{thm}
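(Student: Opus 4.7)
The plan is a Bowen--Ruelle contradiction scheme, adapted to the two-norm setting of a strongly compact map. By countable additivity over $i$, it is enough to show, for each fixed $i_0\geq 0$ and a residual $\gamma$, that $(\mathcal{R}^{i_0}\circ\gamma)^{-1}(W^s_{\delta_5}(\Omega))$ has zero Lebesgue measure, so I fix $i_0$ and study $\psi:=\mathcal{R}^{i_0}\circ\gamma$ on a small neighborhood $U\subset[-1,1]^j$ whose $\psi$-image lies in a prescribed neighborhood of $\Omega$. Hypothesis (B), dense image of $D_x\mathcal{R}^{i_0}$, combined with the invariant unstable cones around $\Omega$ coming from (A1)--(A2), allows me to perturb $\gamma$ in $\Gamma^k(\mathcal{B}_0)$ (resp.\ in $C^k([-1,1]^j,\mathcal{B}_0)$) so that on $U$ one can select $n$ coordinate directions whose $D\psi$-images lie inside the unstable cone, hence span an $n$-plane transversal to $E^h$. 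A Fubini argument over the remaining $j-n$ coordinates (or, when $j<n$, a direct transversality argument that generic $j$-dimensional families already fail to meet the codimension-$n$ shadow except in measure zero) reduces the theorem to the following claim: if $M\subset\mathcal{B}_0$ is a $C^{1+\mathrm{Lip}}$ $n$-dimensional manifold transversal to $E^h$ near $\Omega$, then $M\cap W^s_{\delta_5}(\Omega)$ has zero $n$-dimensional Lebesgue measure.

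Assume for contradiction that such an $M$ meets $W^s_{\delta_5}(\Omega)$ in a set of positive Lebesgue measure, and pick a density point $z_0$. The core dynamical step is to iterate $M$ forward: partial hyperbolicity provides an invariant cone field near $\Omega$, so each image $\mathcal{R}^N M$ contains through $\mathcal{R}^N z_0$ a $C^{1+\mathrm{Lip}}$ $n$-dimensional piece $M_N$, still transversal to $E^h$ and uniformly expanded in the unstable direction at a rate controlled by $\theta$. Writing $\mathcal{R}=i\circ\tilde{\mathcal{R}}$, the strong compactness of $\mathcal{R}$ yields pre-compactness of $\{M_N\}$ in the strong $|\cdot|_1$-topology; simultaneously keeping uniform $C^{1+\mathrm{Lip}}$ bounds and bounded-distortion estimates in the weak $|\cdot|_0$-norm (in which the partial hyperbolicity is stated) produces a subsequential limit $M_\infty$: a $C^{1+\mathrm{Lip}}$ $n$-dimensional manifold through a point of $\Omega$ and transversal to $E^h$.

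The bounded-distortion estimates for the Jacobian of $\mathcal{R}^N$ along these transversal discs, obtained from the $C^2$ regularity of $\tilde{\mathcal{R}}$ and the uniform unstable expansion, then ensure that the density-$1$ property at $z_0$ is preserved under iteration and passes in the limit to a point of $M_\infty$ where $W^s_\delta(\Omega)$ has full Lebesgue density. Since $W^s_\delta(\Omega)$ is closed, an $M_\infty$-open ball of full density must lie entirely inside $W^s_\delta(\Omega)$, contradicting hypothesis (A). This establishes the measure-zero claim slice by slice; the residual/genericity conclusion follows by a standard Baire argument, since the transversality and positivity properties used above are open and dense in each of $\Gamma^k(\mathcal{B}_0)$ and $C^k([-1,1]^j,\mathcal{B}_0)$.

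The \emph{main obstacle} is the extraction and regularity control of the limit manifold $M_\infty$. Because $\mathcal{R}$ is non-invertible and there is no contraction in the horizontal direction, the usual graph-transform contraction that produces stable manifolds is unavailable. Strong compactness is used as a substitute, but only delivers pre-compactness in $|\cdot|_1$, whereas the hyperbolicity estimates and the empty-interior hypothesis live in $|\cdot|_0$. Maintaining the balance between these two norms uniformly across arbitrarily long iteration times, while at the same time preserving the bounded distortion of the induced Lebesgue-type measures on the transversal iterates $M_N$, is the most delicate part of the proof.
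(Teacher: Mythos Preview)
Your proposal is essentially correct and follows the paper's route: reduce to $n$-dimensional transversal graphs, then run a Bowen-type argument combining bounded distortion with a compactness extraction of a limit graph (using strong compactness) to contradict the transversal empty interior hypothesis; you also correctly identify the two-norm limit extraction as the principal technical difficulty. Two organizational differences deserve comment. First, the paper separates the compactness step from the covering step: Proposition~\ref{measureh} first uses the limit extraction you describe to prove a \emph{uniform lower bound} on the complement-of-shadow measure valid for every transversal graph with controlled $C^2$ data, and then Proposition~\ref{zerohip} plugs this bound into a Bowen $(k,\delta)$-separated covering rather than iterating a single density point. Your phrasing ``full Lebesgue density at a point of $M_\infty$'' is slightly off --- what the bounded-distortion step actually delivers, and what is needed, is full \emph{measure} of the shadow in a fixed-radius ball of $M_\infty$, after which closedness of $W^s_\delta(\Omega)$ forces the whole ball into the shadow. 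Second, your reduction to $n$-dimensional slices by ``selecting $n$ coordinate directions'' tacitly assumes $j\geq n$ and treats $j<n$ with a handwave about codimension (but the shadow is not a submanifold); the paper sidesteps this by verifying hypothesis~(H) of Proposition~\ref{generic}: for each point $x$ it builds an $n$-dimensional affine probe $\alpha$ with $m_{\alpha_z}(\Theta)=0$ for small translates $z$, and the Fubini argument of Proposition~\ref{generic} together with Proposition~\ref{mdense} then handles arbitrary $j$ uniformly.
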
 

If $\mathcal{R}$ is complex analytic and $W^s_\delta(\Omega)$ has {\it real } transversal empty interior we can apply Theorem \ref{main}.  We do not know if { \it complex } transversal empty interior implies the { \it real } transversal empty interior property. This is the motivation to 

\begin{thm}\label{main2} Let $\mathcal{R}$ be a strongly compact, complex analytic map  on a complex  Banach space $\mathcal{B}_0$ with a compact invariant partially hyperbolic set $\Omega$  such that $\mathcal{R}\colon \Omega\rightarrow \Omega$ is onto. If  \begin{itemize}
\item[A.] There is $\delta > 0$   such that $W^s_\delta(\Omega)$ has complex  transversal empty interior,
\item[B.]  For every $i\geq 0$ and every $ x\in \mathcal{R}^{-i}W^s_\delta(\Omega)$ we have that $D_x \mathcal{R}^i$ has dense image,
\end{itemize}
then there exists $\delta_5 >0$ such that $\cup_{i\in \mathbb{N}} \mathcal{R}^{-i} W^s_{\delta_5}(\Omega)$ is a  $\Gamma^\omega(\mathcal{B}_0)$-null set. Indeed  for every $j$    there exists a residual set of  maps $\gamma \in C^{\omega}(\overline{\mathbb{D}}^j,\mathcal{B}_0)$   such that 
$$m(t \in \overline{\mathbb{D}}^j \colon \ \gamma(t) \in \cup_{i\in \mathbb{N}} \mathcal{R}^{-i} W^s_{\delta_5}(\Omega))=0.$$
Here $m$ denotes the Lebesgue measure on $\overline{\mathbb{D}}^j$. Moreover, if we consider  $\mathcal{B}_0$  as a real Banach space then all conclusions of Theorem \ref{main} holds.\end{thm}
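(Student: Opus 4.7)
The plan is to retrace the entire argument of Theorem \ref{main}, systematically replacing ``real $C^{1+Lip}$ $n$-dimensional transversal manifold'' by ``complex analytic $n$-dimensional transversal manifold'' at every appearance in the graph-transform/cutting procedure of Sections \ref{action_t}--\ref{dballs}. Because $\mathcal{R}$ is holomorphic, this substitution is closed under the dynamics: the implicit function theorem applied to $\mathcal{R}$ inside the unstable cones produces holomorphic graphs over $E^u_G$ whenever it is seeded with a holomorphic disk, and the pre-compactness of the resulting sequence of graphs is unaffected, since the strong compactness hypothesis is a purely metric statement about the inclusion $(\mathcal{B}_1,|\cdot|_1)\hookrightarrow(\mathcal{B}_0,|\cdot|_0)$. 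Complex transversal empty interior (hypothesis A) is then invoked at the unique Bowen-style step where, in the proof of Theorem \ref{main}, accumulation of the iterated transversal disks would otherwise produce a limiting transversal $n$-disk meeting $W^s_\delta(\Omega)$ in an open set.

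Next I would redo the Baire/category/Fubini step in $C^{\omega}(\overline{\mathbb{D}}^j,\mathcal{B}_0)$ and in $\Gamma^{\omega}(\mathcal{B}_0)$. A candidate family $\gamma$ must be perturbed along directions that can be realized as $D_x\mathcal{R}^i \cdot u$ for some preimage $x\in \mathcal{R}^{-i}W^s_\delta(\Omega)$; by the dense image hypothesis (B), every target vector in $\mathcal{B}_0$ can be so approximated. I would lift such a vector $v$ to a perturbation $\gamma\mapsto \gamma+\varphi(t)\cdot v$, where $\varphi\colon\overline{\mathbb{D}}^j\to\mathbb{C}$ is a holomorphic bump — thus the perturbation lives inside $C^{\omega}(\overline{\mathbb{D}}^j,\mathcal{B}_0)$ (respectively $\Gamma^\omega(\mathcal{B}_0)$). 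Combined with the transversal Fubini-style counting from Theorem \ref{main} and the distortion control already obtained in Section \ref{lower_bound}, this yields the residual-set-of-curves and $\Gamma^\omega$-null statements.

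For the last sentence asserting that Theorem \ref{main}'s conclusions hold when $\mathcal{B}_0$ is viewed as a real Banach space, the point is that the null set $\cup_i \mathcal{R}^{-i}W^s_{\delta_5}(\Omega)$ itself is independent of the scalar field; only its smallness under real $\Gamma^k$ families must be re-verified. For $k=\omega_{\mathbb{R}}$ this is essentially by definition, since $\Gamma^{\omega_{\mathbb{R}}}((\mathcal{B}_0)_{\mathbb{R}})$ is built from restrictions of elements of $\Gamma^{\omega}(\mathcal{B}_{0,\mathbb{C}})$. For $k\in \mathbb{N}\cup\{\infty\}$ I would rerun the Baire-category perturbation scheme above but inside $\Gamma^k((\mathcal{B}_0)_{\mathbb{R}})$; the perturbing bumps $\varphi(t)\cdot v$ constructed in the complex step are smooth in the real variable $t\in [-1,1]^j$, hence automatically available in every $\Gamma^k$, while the geometric input — cone-invariance, distortion, and complex transversal empty interior — is exactly the one already established.

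The main obstacle will be the first step: carrying the entire cutting/graph-transform machinery of Sections \ref{action_t}--\ref{dballs} through the holomorphic category while simultaneously tracking the ``strong'' norm $|\cdot|_1$ and the ``weak'' norm $|\cdot|_0$. One has to ensure that the holomorphic disks produced at each iterate do not shrink their domains in an uncontrolled way, so that Cauchy-type bounds remain usable for the distortion estimates of Section \ref{lower_bound}; this is where the interplay between holomorphy, strong compactness, and partial hyperbolicity is most delicate, and where hypothesis (A) of Theorem \ref{main2} must be matched with the precise shape of the limiting disks produced by the argument.
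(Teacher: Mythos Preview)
Your overall plan is correct and matches the paper's approach: Sections \ref{action_t}--\ref{dballs} are already written to cover both the real $C^2$ and the complex analytic cases simultaneously (see the remark after Proposition \ref{trans}), and the final deduction is obtained by applying the abstract Proposition \ref{generic} to $\mathcal{M}=\mathcal{R}^i$ via an \emph{affine} perturbation $\alpha(\lambda_1,\dots,\lambda_n)=x+\sum_j\gamma\lambda_j v_j$ with $v_j$ chosen so that $\{\pi^u_G D_x\mathcal{M}\cdot v_j\}$ is a basis of $E^u_G$. Two remarks, however.

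First, what you identify as ``the main obstacle'' --- carrying the graph-transform through the holomorphic category with two norms --- is in fact the \emph{easier} case. Because a $C^{1+Lip}$ complex differentiable map is automatically complex analytic, the second-derivative bookkeeping in Proposition \ref{trans} and Corollary \ref{cordon} becomes redundant: Cauchy estimates give uniform bounds on all derivatives from the $C^0$ bounds on the iterated graphs, so no separate control of $|D^2\mathcal{H}_k|$ is needed. The paper states this explicitly. Your plan is not wrong here, just more laborious than necessary.

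Second, your argument for the last sentence (that the real conclusions of Theorem \ref{main} hold) has a genuine gap. It is \emph{not} ``essentially by definition'' for $k=\omega_{\mathbb R}$: the space $\Gamma^{\omega_{\mathbb R}}$ is built from the complexification $\mathcal{B}_{\mathbb C}$ of $\mathcal{B}_0$ \emph{viewed as a real Banach space}, and this complexification is a strictly larger space than the original complex $\mathcal{B}_0$. Likewise for finite $k$: when $\mathcal{B}_0$ is viewed as real, $E^u_G$ has real dimension $2n$, so the affine perturbation verifying hypothesis (H) of Proposition \ref{generic} must be $2n$-dimensional, not $n$-dimensional. The paper's device (Proposition \ref{mdense2}) is to take the complex affine $\alpha\colon\mathbb C^n\to\mathcal{B}_0$ already constructed, regard it as a real affine map $\mathbb R^{2n}\to(\mathcal{B}_0)_{\mathbb R}$ via $\lambda_m=\lambda_m^a+i\lambda_m^b$, observe that the zero-measure statement on $\mathbb C^n$ is literally the same as on $\mathbb R^{2n}$, and then (for $k=\omega_{\mathbb R}$) extend this real affine map complex-linearly to $\mathbb C^{2n}\to(\mathcal{B}_0)_{\mathbb C}$. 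Your ``smooth bump'' perturbations do not obviously supply the missing $n$ real directions, and your appeal to the definition of $\Gamma^{\omega_{\mathbb R}}$ conflates $\mathcal{B}_0$ with $(\mathcal{B}_0)_{\mathbb C}$.
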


\begin{rem} We will replace the transversal empty condition for a weaker assumption that is a little more technical. We postpone the description of this condition to the end of Section \ref{adapted}.  \end{rem} 

\begin{rem} If $\Omega$ is a hyperbolic invariant set for $\mathcal{R}$ and $\delta_5$ is small enough then $\cup_{i\in \mathbb{N}} \mathcal{R}^{-i} W^s_{\delta_5}(\Omega) $ is  the global stable lamination $W^s(\Omega)$.  \end{rem}

One can  ask if  we could extend our results to more general contexts that already appeared in the finite-dimensional setting. For instance, we could certainly generalize  our result assuming that $\mathcal{R}$ satisfies the  nonuniformly expanding condition on the centre-unstable direction as in Alves and Pinheiro \cite{ap}. However we feel that the most crucial adaptations to the infinite dimensional setting appeared  already in the present case and that the additional modifications of  their methods to achieve these results would be minor.

\section{Null sets in Banach spaces.} \label{nullsection}

Define $\Gamma^k_j(\mathcal{B}) \subset \Gamma^k(\mathcal{B})$ as the subset of all functions $\gamma$  where $\gamma(\lambda)$  depends only on the first $j$ entries of $\lambda$.  Then 
$$\cup_{j \in \mathbb{N}} \Gamma^k_j$$
is dense in $\Gamma^k(\mathcal{B})$. The proof is left to the reader (see the case $k=1$ in \cite{lind}).  If $\Theta \subset \mathcal{B}$ is a borelian set and $\gamma \in \Gamma^k(\mathcal{B})$, denote
$$m_\gamma(\Theta)=m(\lambda \in T\colon \gamma(\lambda) \in \Theta),$$
where $m$ is the product Lebesgue measure on $T$. Note that $m$ and $m_\gamma$  are regular Borel measures. We say that $\Theta$ is a {\it $\Gamma^k(\mathcal{B})$-null set}  is there exists a residual subset $\mathcal{F} \subset \Gamma^k(\mathcal{B})$ such that $m_\gamma(\Theta)=0$ for every $\gamma \in \mathcal{F}$. J. Lindenstrauss and D. Preiss \cite{lind} observed  that if $\mathcal{B}$ has finite dimension then a borelian $\Theta$ is a  $\Gamma^1(\mathcal{B})$-null set if and only if $\Theta$ has zero Lebesgue measure.

All results of this section hold for every $k \in \mathbb{N} \cup \{ \infty, \omega,\omega_{\mathbb{R}}\}$, but keep in mind that whenever we consider  $k=\omega$ the space  $\mathcal{B}$ is a complex Banach space and in the case $k=\omega_{\mathbb{R}}$ the real  Banach space $\mathcal{B}$ is the real trace of a complex Banach space $\mathcal{B}_\mathbb{C}$ and the definition of $\Gamma^{\omega_{\mathbb{R}}}(\mathcal{B})$ depends on  $\mathcal{B}_\mathbb{C}$. We omit the proofs of  Lemma \ref{densenull}, Lemma \ref{densenull2} and Proposition \ref{densenull3}  below for $k=\omega, \omega_\mathbb{R}$. The proofs in these cases are quite similar and indeed easier, since we do not need to deal with partial derivatives.

\begin{lem}\label{densenull} Let $\Theta$ be a countable union of closed subsets  of the Banach space $\mathcal{B}$. If there is a dense  subset $\mathcal{S} \subset \Gamma^k(\mathcal{B})$ such that $m_\gamma(\Theta)=0$ for $\gamma \in \mathcal{S}$ then $\Theta$ is a $\Gamma^k(\mathcal{B})$-null set.
\end{lem}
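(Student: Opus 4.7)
The plan is to exploit the Baire-category structure of $\Gamma^k(\mathcal{B})$ together with upper semicontinuity of $\gamma \mapsto m_\gamma(F)$ for closed $F$. Write $\Theta = \bigcup_{n \in \mathbb{N}} \Theta_n$ with each $\Theta_n$ closed. Since $\Gamma^k(\mathcal{B})$ is Polish and hence Baire, a countable intersection of residual subsets is residual, so it suffices to produce, for each $n$ and each $p \in \mathbb{N}$, an open dense set $\mathcal{U}_{n,p}$ on which $m_\gamma(\Theta_n) < 1/p$, and then take $\mathcal{F} = \bigcap_{n,p} \mathcal{U}_{n,p}$. For $\gamma \in \mathcal{F}$ one has $m_\gamma(\Theta_n) = 0$ for every $n$, and hence $m_\gamma(\Theta) \leq \sum_n m_\gamma(\Theta_n) = 0$.

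First I would verify that the function $F_n\colon \gamma \mapsto m_\gamma(\Theta_n)$ is upper semicontinuous on $\Gamma^k(\mathcal{B})$. The Fréchet (respectively, Banach) topology on $\Gamma^k(\mathcal{B})$ controls the sup norm on the compact space $T$, so any convergent sequence $\gamma_j \to \gamma$ in $\Gamma^k(\mathcal{B})$ converges uniformly on $T$. If $\lambda \in T$ is such that $\gamma(\lambda) \notin \Theta_n$, then, by closedness of $\Theta_n$, the point $\gamma(\lambda)$ has positive distance from $\Theta_n$, and uniform convergence forces $\gamma_j(\lambda) \notin \Theta_n$ for all sufficiently large $j$. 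This yields the pointwise bound
$$\limsup_{j \to \infty} \chi_{\gamma_j^{-1}(\Theta_n)}(\lambda) \leq \chi_{\gamma^{-1}(\Theta_n)}(\lambda) \quad \text{for every } \lambda \in T.$$
Since $m(T) = 1 < \infty$ and the characteristic functions are uniformly bounded by $1$, the reverse Fatou lemma gives
$$\limsup_{j \to \infty} m_{\gamma_j}(\Theta_n) \leq m_\gamma(\Theta_n),$$
which is exactly the upper semicontinuity of $F_n$.

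Consequently, $\mathcal{U}_{n,p} = \{\gamma \in \Gamma^k(\mathcal{B}) : F_n(\gamma) < 1/p\}$ is open. It is also dense: by hypothesis $\mathcal{S}$ is dense in $\Gamma^k(\mathcal{B})$ and satisfies $m_\gamma(\Theta_n) \leq m_\gamma(\Theta) = 0$ for every $\gamma \in \mathcal{S}$, so $\mathcal{S} \subset \mathcal{U}_{n,p}$. Intersecting over $n$ and $p$ produces the desired residual set $\mathcal{F}$ and completes the proof.

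The argument is short and the only delicate point is the upper semicontinuity of $F_n$. What makes it work is that (i) the topology of $\Gamma^k(\mathcal{B})$ is strong enough to force uniform convergence on the parameter space $T$, and (ii) the parameter space $T$ has finite total mass, so the reverse Fatou lemma applies. Both ingredients are built into the definitions, and this is why one can treat the cases $k \in \mathbb{N} \cup \{\infty, \omega, \omega_{\mathbb{R}}\}$ uniformly: the proof uses only the sup-norm piece of the topology and the Baire property, not the higher-order derivative seminorms.
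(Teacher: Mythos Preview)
Your proof is correct and follows essentially the same strategy as the paper: reduce to a single closed piece, show that the sublevel sets $\{\gamma : m_\gamma(\Theta_n) < 1/p\}$ are open and contain the dense set $\mathcal{S}$, and intersect. The only cosmetic difference is in how openness is verified: the paper argues directly by outer regularity of $m$ and a positive-distance estimate $\delta = \operatorname{dist}_{\mathcal{B}}(\gamma(O^c),\Theta) > 0$, whereas you package the same idea as upper semicontinuity of $\gamma \mapsto m_\gamma(\Theta_n)$ via reverse Fatou; both rely on exactly the same facts (closedness of $\Theta_n$, uniform convergence on $T$, and $m(T)<\infty$).
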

\begin{proof} Since a countable union of $\Gamma^k(\mathcal{B})$-null sets is a $\Gamma^k(\mathcal{B})$-null set, it is enough to prove the lemma assuming that $\Theta$ is a closed set. Let $\gamma \in \mathcal{S}$. Then 
$$\Theta_\gamma = \{ \lambda \in T \colon \gamma(\lambda) \in \Theta\}$$
is a compact set with zero product Lebesgue measure. Let $O \subset T$ be an open subset such that $\Theta_\gamma \subset O$ and $m(O) < \epsilon$.  Let $\delta = dist_{\mathcal{B}}(\gamma(O^c), \Theta) > 0.$  If $\tilde{\gamma} \in \Gamma^k(\mathcal{B})$ satisfies 
$$\sup_{\lambda \in T} |\gamma(\lambda)-\tilde{\gamma}(\lambda)| < \delta/2$$
then $ \Theta_{\tilde{\gamma}} \subset O$, so $m_{\tilde{\gamma}}(\Theta)< \epsilon$. In particular
$$\mathcal{S}_\epsilon = \{ \tilde{\gamma} \in \Gamma^k(\mathcal{B})\colon \ m_{\tilde{\gamma}}(\Theta)< \epsilon \}$$
contains an open and dense  subset of $\Gamma^k(\mathcal{B})$. Since
$$\mathcal{F}= \{  \tilde{\gamma} \in \Gamma^k(\mathcal{B})\colon \ m_{\tilde{\gamma}}(\Theta)=0 \}= \cap_{n \in \mathbb{N}^\star} \mathcal{S}_{1/n}$$
we conclude that $\mathcal{F}$ is a residual subset of  $\Gamma^k(\mathcal{B})$, so $\Theta$ is a $\Gamma^k(\mathcal{B})$-null set.
\end{proof}

\begin{lem}\label{densenull2}  Let $\Theta$ be  $\Gamma^k(\mathcal{B})$-null  borelian set. Then  for every $j$  there is dense  subset $\mathcal{S}_j \subset \Gamma^k_j(\mathcal{B})$ such that $m_\gamma(\Theta)=0$ for  every $\gamma \in \mathcal{S}_j$.
\end{lem}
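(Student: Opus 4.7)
The plan is a slicing argument based on Fubini's theorem that reduces the density statement in $\Gamma^k_j(\mathcal{B})$ to the already-granted density of $\mathcal{F}=\{\gamma\in\Gamma^k(\mathcal{B}):\ m_\gamma(\Theta)=0\}$ in the larger space $\Gamma^k(\mathcal{B})$.

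Fix $j\ge 1$, $\gamma_0\in\Gamma^k_j(\mathcal{B})$, and a basic open neighborhood $U$ of $\gamma_0$ in $\Gamma^k_j(\mathcal{B})$; the goal is to produce $\tilde\gamma\in U$ with $m_{\tilde\gamma}(\Theta)=0$. View $\gamma_0$ as an element $\hat\gamma_0\in\Gamma^k(\mathcal{B})$ which is independent of the coordinates $\lambda_{j+1},\lambda_{j+2},\dots$, and let $\hat U\subset\Gamma^k(\mathcal{B})$ denote the basic neighborhood of $\hat\gamma_0$ cut out by the same finite collection of partial-derivative sup-seminorms (all with indices in $\{1,\dots,j\}$) as those defining $U$. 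The key elementary observation is that for every $\gamma\in\hat U$ and every fixed tail $\lambda^{(2)}\in I^{\mathbb{N}\setminus\{1,\dots,j\}}$, the sliced function $\lambda^{(1)}\mapsto\gamma(\lambda^{(1)},\lambda^{(2)})$, regarded as an element of $\Gamma^k_j(\mathcal{B})$, lies in $U$; this is because partial differentiation in the first $j$ coordinates commutes with fixing $\lambda^{(2)}$ and, for any multi-index with entries in $\{1,\dots,j\}$,
$$\sup_{\lambda^{(1)}}|\partial^l_{i_1,\dots,i_l}(\gamma-\hat\gamma_0)(\lambda^{(1)},\lambda^{(2)})|\ \le\ \|\partial^l_{i_1,\dots,i_l}(\gamma-\hat\gamma_0)\|_\infty,$$
and this bound is uniform in $\lambda^{(2)}$.

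Since $\Theta$ is a $\Gamma^k(\mathcal{B})$-null set, the set $\mathcal{F}$ is residual, hence dense, in the Baire space $\Gamma^k(\mathcal{B})$, so one can select $\Gamma'\in \mathcal{F}\cap\hat U$. Splitting $T\simeq I^j\times T'$ with $T'=I^{\mathbb{N}\setminus\{1,\dots,j\}}$, Fubini applies to $m_{\Gamma'}(\Theta)=0$ (justified because $\Gamma'$ is continuous and $\Theta$ is borelian, so $(\Gamma')^{-1}(\Theta)$ is borelian in $T$) and yields
$$0\ =\ m_{\Gamma'}(\Theta)\ =\ \int_{T'} m_{\Gamma'(\cdot,\lambda^{(2)})}(\Theta)\,dm'(\lambda^{(2)}),$$
so $m_{\Gamma'(\cdot,\lambda^{(2)})}(\Theta)=0$ for $m'$-almost every $\lambda^{(2)}$. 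Pick any such $\lambda^{(2)}$ and set $\tilde\gamma:=\Gamma'(\cdot,\lambda^{(2)})\in\Gamma^k_j(\mathcal{B})$; by construction $\tilde\gamma\in U$ and $m_{\tilde\gamma}(\Theta)=0$, so $\mathcal{S}_j=\{\gamma\in\Gamma^k_j(\mathcal{B}):\ m_\gamma(\Theta)=0\}$ is dense in $\Gamma^k_j(\mathcal{B})$.

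There is no serious obstacle: the only point requiring a moment of care is the uniform-in-$\lambda^{(2)}$ control of the sliced restriction inside a basic neighborhood of $\Gamma^k_j(\mathcal{B})$, handled by the sup inequality above. The argument is insensitive to the regularity class: for $k=\omega$ and $k=\omega_{\mathbb{R}}$ one simply drops the partial derivatives and compares sup norms, and the same slicing step produces the desired analytic slice inside the prescribed sup-norm neighborhood of $\gamma_0$.
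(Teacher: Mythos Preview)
Your proof is correct and follows essentially the same approach as the paper: both arguments pick an element of $\Gamma^k_j(\mathcal{B})$, lift a basic neighborhood to $\Gamma^k(\mathcal{B})$ via the same finite collection of seminorms, use residuality of $\mathcal{F}$ to find a nearby $\alpha$ with $m_\alpha(\Theta)=0$, and then apply Fubini to produce a slice $\alpha_\omega\in\Gamma^k_j(\mathcal{B})$ in the original neighborhood with $m_{\alpha_\omega}(\Theta)=0$. Your write-up is a bit more explicit than the paper's about why the slice remains in the neighborhood (the sup inequality), but the underlying idea is identical.
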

\begin{proof} Let $A \subset   \Gamma^k(\mathcal{B})$ be an open set such that $A\cap \Gamma^k_j(\mathcal{B})\neq \emptyset $. Let $\gamma \in A\cap \Gamma^k_j(\mathcal{B})$. Then there exists $n$  vectors $a_\ell=(i_1^\ell, i_2^\ell, \cdots , i^\ell_{j_\ell})$, with $j_\ell \leq k$, $\ell\leq n$, such that  if 
 $$\partial_{a_\ell}=\partial^{j_\ell}_{i_1^\ell i_2^\ell \cdots i_{j_\ell}^\ell} $$ then
 $$A'=\{\beta \in \Gamma^k(\mathcal{B})\colon \ |\partial_{a_\ell} (\beta-\gamma)|_\infty < \epsilon, \  for \ every \  \ell\leq  n   \} \subset A.$$
Since $\Theta$ is a $\Gamma^k(\mathcal{B})$-null set there exists $\alpha \in A'$ such that $m_\alpha(\Theta)=0$. For every $\omega=(t_{j+1},t_{j+2},\dots)\in I^{\mathbb{N}}$ define the  finite dimensional subfamily $\alpha_{\omega} \in \Gamma^k_j(\mathcal{B})$  as
$$\alpha_{\omega}(t_1,\dots,t_j)= \alpha(t_1,\dots, t_j,t_{j+1},\dots).$$
Note that  $\alpha_{\omega} \in A'$. By the Fubini's Theorem for almost every $ \omega \in I^{\mathbb{N}}$ we have that $m_{\alpha_\omega}(\Theta)=0$.  This proves the lemma. 
\end{proof}

It was noted  by J.~Lindenstrauss, D.~Preiss, and J.~Ti{\v{s}}er \cite{lind2} in the case $k=1$ that if $\Theta$ is a countable union of closed subsets we have that various concepts of null-sets coincides. Indeed for arbitrary $k \in \mathbb{N} \cup \{ \omega,\omega_{\mathbb{R}}\}$  we have 
\begin{prop}\label{densenull3} Let $\Theta$ be a countable union of closed subsets  in the Banach space $\mathcal{B}$. The following statements are equivalent. 
\begin{itemize}
\item[A.] The set  $\Theta$ is a $\Gamma^k(\mathcal{B})$-null  set.
\item[B.]  For every $j$  there is a dense  subset $\mathcal{S}_j \subset \Gamma^k_j(\mathcal{B})$ such that $m_\gamma(\Theta)=0$ for  every $\gamma \in \mathcal{S}_j$.
\item[C.] For every $j$  there is a  residual   subset $\mathcal{S}_j \subset \Gamma^k_j(\mathcal{B})$ such that $m_\gamma(\Theta)=0$ for  every $\gamma \in \mathcal{S}_j$.
\end{itemize}
\end{prop}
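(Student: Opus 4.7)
The plan is to close the cycle $(A)\Rightarrow (B)\Rightarrow (C)\Rightarrow (B)$ and $(B)\Rightarrow (A)$ using the two preceding lemmas together with the density of $\bigcup_j\Gamma^k_j(\mathcal{B})$ in $\Gamma^k(\mathcal{B})$ and the fact that each $\Gamma^k_j(\mathcal{B})$, being a closed linear subspace of the Polish space $\Gamma^k(\mathcal{B})$, is itself Polish and therefore a Baire space.

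First I would handle the easy implications. The implication $(A)\Rightarrow (B)$ is exactly the content of Lemma~\ref{densenull2}, so there is nothing to add. The implication $(C)\Rightarrow (B)$ is immediate, since a residual subset of a Baire space is dense; this applies to $\mathcal{S}_j\subset \Gamma^k_j(\mathcal{B})$ because $\Gamma^k_j(\mathcal{B})$ is Polish. For $(B)\Rightarrow (A)$, I would observe that since $\bigcup_j \Gamma^k_j(\mathcal{B})$ is dense in $\Gamma^k(\mathcal{B})$ and $\mathcal{S}_j$ is dense in $\Gamma^k_j(\mathcal{B})$ for every $j$, the union $\bigcup_j \mathcal{S}_j$ is dense in $\Gamma^k(\mathcal{B})$ and consists of functions $\gamma$ with $m_\gamma(\Theta)=0$. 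Applying Lemma~\ref{densenull} (which uses in a crucial way that $\Theta$ is a countable union of closed sets) then gives that $\Theta$ is a $\Gamma^k(\mathcal{B})$-null set.

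The remaining implication $(B)\Rightarrow (C)$ is where the work happens, and the strategy is simply to re-run the proof of Lemma~\ref{densenull} inside the closed subspace $\Gamma^k_j(\mathcal{B})$ rather than inside the full space $\Gamma^k(\mathcal{B})$. Reducing to a single closed $\Theta$ by a countable union argument, for any $\gamma\in \mathcal{S}_j$ the set $\Theta_\gamma=\{\lambda\in T:\gamma(\lambda)\in\Theta\}$ is closed in $T$ (so compact) with $m(\Theta_\gamma)=0$; engulfing it in an open $O\subset T$ with $m(O)<\epsilon$ and setting $\delta=\mathrm{dist}_{\mathcal{B}}(\gamma(O^c),\Theta)>0$, any $\tilde\gamma\in\Gamma^k_j(\mathcal{B})$ satisfying $\sup_{\lambda\in T}|\gamma(\lambda)-\tilde\gamma(\lambda)|<\delta/2$ satisfies $\Theta_{\tilde\gamma}\subset O$ and hence $m_{\tilde\gamma}(\Theta)<\epsilon$. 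Therefore
$$\mathcal{S}_j^{(n)}=\{\tilde\gamma\in\Gamma^k_j(\mathcal{B}):\ m_{\tilde\gamma}(\Theta)<1/n\}$$
contains an open set that meets every neighbourhood of every $\gamma\in\mathcal{S}_j$, so by the density of $\mathcal{S}_j$ in $\Gamma^k_j(\mathcal{B})$ the set $\mathcal{S}_j^{(n)}$ contains an open and dense subset of $\Gamma^k_j(\mathcal{B})$. The intersection $\bigcap_{n\geq 1}\mathcal{S}_j^{(n)}$ is then the desired residual subset on which $m_\gamma(\Theta)=0$.

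The only real subtleties I expect are bookkeeping rather than conceptual: verifying that $\Gamma^k_j(\mathcal{B})$ is genuinely a Baire space in each of the regularity classes $k\in\mathbb{N}\cup\{\infty,\omega,\omega_\mathbb{R}\}$ (which is automatic because it is a closed subspace of the Polish/Banach space $\Gamma^k(\mathcal{B})$), and checking that the single-closed-set reduction in the $(B)\Rightarrow (C)$ step is compatible with the quantification ``for every $j$'' — but a countable union of residual sets in $\Gamma^k_j(\mathcal{B})$ is residual, so this causes no trouble. The holomorphic and real-analytic cases $k=\omega,\omega_\mathbb{R}$ require no new ideas since the argument above only uses the uniform norm on $\gamma$, matching the definitions of $\Gamma^\omega(\mathcal{B})$ and $\Gamma^{\omega_\mathbb{R}}(\mathcal{B})$ given earlier.
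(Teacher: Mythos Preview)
Your proposal is correct and follows essentially the same route as the paper: $(A)\Rightarrow(B)$ via Lemma~\ref{densenull2}, $(B)\Rightarrow(C)$ by rerunning the proof of Lemma~\ref{densenull} inside $\Gamma^k_j(\mathcal{B})$, and the passage back to $(A)$ via the density of $\bigcup_j\Gamma^k_j(\mathcal{B})$ and Lemma~\ref{densenull}. One small slip: in your last paragraph you want the countable \emph{intersection} of residual sets to be residual (to handle $\Theta=\bigcup_n\Theta_n$), not the union.
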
 
\begin{proof} The implication $(A)\Rightarrow (B)$ follows from Lemma \ref{densenull2}. Using the same argument of the proof of Lemma \ref{densenull} (replacing $\Gamma^k(\mathcal{B})$ by $\Gamma^k_{j}(\mathcal{B})$) one can easily show that $(B)\Rightarrow (C)$.  To show that $(C)\Rightarrow (A)$, note that since $\cup_j \Gamma^k_j(\mathcal{B})$ is dense in $\Gamma^k(\mathcal{B})$, item $C$ implies that there is a dense  subset $\mathcal{S} \subset \Gamma^k(\mathcal{B})$ such that $m_\gamma(\Theta)=0$ for $\gamma \in \mathcal{S}$. By Lemma \ref{densenull} we get $A$. 

\end{proof}

\begin{prop}\label{generic} Suppose that 
\begin{itemize} 
\item[(H)] The set $\Theta$ is  a borelian  subset  of the Banach space $\mathcal{B}$ such that for every $x \in \mathcal{B}$ there exists $\delta=\delta(x) > 0$ and a finite dimensional  family $\alpha \in \cup_j \Gamma^k_j(\mathcal{B})$ with $\alpha(0)=x$ such that for every $z\in \mathcal{B}$  satisfying $|z|< \delta$ we have $m_{\alpha_z}(\Theta)=0$, where 
$$\alpha_z(\lambda) = \alpha(\lambda)+ z,$$ 
\end{itemize} 
Then for every $j\geq 0$ and every $k \in \mathbb{N}\cup \{\infty, \omega, \omega_\mathbb{R}\}$ there is a dense subset  $\mathcal{S}_{j}$ of  $\Gamma^k_{j}(\mathcal{B})$ such that $m_\gamma(\Theta)=0$ for every $\gamma \in \mathcal{S}_{j}$.  
\end{prop}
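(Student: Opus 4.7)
The plan is a local-to-global construction using Fubini and hypothesis (H). Given $\beta \in \Gamma^k_j(\mathcal{B})$ and $\epsilon > 0$, I aim to produce $\gamma \in \Gamma^k_j(\mathcal{B})$ with $\|\gamma - \beta\|_{C^k} < \epsilon$ and $m_\gamma(\Theta) = 0$, which yields the density of $\mathcal{S}_j$.

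First I would handle the local case in which $\beta(I^j) \subset B(x_0, \delta(x_0)/2)$ for a single $x_0$. Letting $\alpha = \alpha^{(x_0)}$ be the finite-dimensional family from (H), I form the augmented family $\tilde{\gamma}(\lambda', \mu) = \beta(\lambda') + \alpha(\mu) - x_0$. For each fixed $\lambda'$, the vector $z := \beta(\lambda') - x_0$ satisfies $|z| < \delta(x_0)$, so (H) gives $m(\mu : \tilde{\gamma}(\lambda', \mu) \in \Theta) = 0$. Fubini then yields $m((\lambda', \mu) : \tilde{\gamma}(\lambda', \mu) \in \Theta) = 0$, so for $m$-a.e.\ small $\mu$ the shift $\gamma_\mu := \beta + (\alpha(\mu) - x_0)$ lies in $\Gamma^k_j(\mathcal{B})$, is arbitrarily close to $\beta$ in $C^k$ by continuity of $\alpha$ at $0$, and satisfies $m_{\gamma_\mu}(\Theta) = 0$.

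For general $\beta$, compactness of $\beta(I^j) \subset \mathcal{B}$ allows a finite cover by balls $B(x_i, \delta_i/2)$, $i = 1, \ldots, N$, with associated families $\alpha^{(i)}$ from (H); these pull back to an open cover $\{U_i = \beta^{-1}(B(x_i, \delta_i/2))\}$ of $I^j$. With a $C^k$ partition of unity $\{\phi_i\}$ subordinate to $\{U_i\}$, I would form the combined family
$$
\tilde{\gamma}(\lambda', \mu^{(1)}, \ldots, \mu^{(N)}) = \beta(\lambda') + \sum_{i=1}^N \phi_i(\lambda')\bigl(\alpha^{(i)}(\mu^{(i)}) - x_i\bigr)
$$
and apply Fubini over the combined parameter space to extract a small $\mu$ producing the desired $\gamma$.

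The main obstacle is the transition zones between balls. On interior points of $U_i$ where $\phi_i \equiv 1$ and $\phi_j \equiv 0$ for $j \neq i$, the $\mu^{(i)}$-slice of $\tilde{\gamma}$ is a pure translate of $\alpha^{(i)}$ and (H) applies directly. But on transitions where $\phi_i(\lambda') \in (0,1)$, the slice becomes $\phi_i(\lambda')\alpha^{(i)}(\mu^{(i)}) + \text{const}$, a \emph{scaled} rather than translated copy of $\alpha^{(i)}$, which (H) does not control. I would resolve this either by refining the covering and invoking (H) at additional base points inside the transitions, or by a sequential inductive perturbation that processes the balls one at a time while keeping each subsequent perturbation small enough to preserve the null properties obtained in previous steps. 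For the analytic cases $k \in \{\omega, \omega_{\mathbb{R}}\}$, partitions of unity are unavailable, so the patching step must be replaced; one natural substitute is a Runge-type global analytic approximation producing a holomorphic shift field that matches the prescribed local shifts on compact pieces.
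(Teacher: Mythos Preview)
Your local case is correct and matches the paper. The gap is in the global patching: the partition of unity is the wrong tool here, and none of your proposed repairs (refining the cover, sequential perturbation, Runge approximation) is needed. The paper avoids the scaling problem you identified by a much simpler device: it drops the weights entirely and uses the augmented family
\[
\tilde{\gamma}\bigl(\lambda',\mu^{(1)},\ldots,\mu^{(N)}\bigr)=\beta(\lambda')+\sum_{i=1}^N\bigl(\alpha^{(i)}(\mu^{(i)})-x_i\bigr),
\]
restricting the $\mu^{(i)}$ to a small cube $|\mu^{(i)}|<\eta$ so that $\sum_i|\alpha^{(i)}(\mu^{(i)})-x_i|<\tfrac12\min_i\delta_i$. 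Now for any $\lambda'\in U_{i_0}$ and any fixed values of $\mu^{(i)}$, $i\neq i_0$, the $\mu^{(i_0)}$-slice is exactly $\alpha^{(i_0)}(\mu^{(i_0)})+z$ with $|z|<\delta_{i_0}$, a pure translate, so (H) applies directly. Fubini over $\mu^{(i_0)}$ kills the measure on each slice, hence on $U_{i_0}\times\{|\mu|<\eta\}$; since the $U_{i_0}$ cover $I^j$, the full product has measure zero. A second Fubini over $(\mu^{(1)},\ldots,\mu^{(N)})$ then gives almost every small $\mu$ for which $\gamma_\mu(\lambda'):=\beta(\lambda')+\sum_i(\alpha^{(i)}(\mu^{(i)})-x_i)$ satisfies $m_{\gamma_\mu}(\Theta)=0$; this $\gamma_\mu$ is a translate of $\beta$, so it lies in $\Gamma^k_j(\mathcal{B})$ and is $C^k$-close to $\beta$.

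The point you missed is that the parameters $\mu^{(i)}$ are \emph{separate} coordinates in the augmented family, so there is no need to blend them with $\lambda'$-dependent weights; the ``patching'' happens automatically through Fubini because every $\lambda'$ lies in some $U_{i_0}$ and the corresponding $\mu^{(i_0)}$-slice is always a clean translate. This also handles the analytic cases $k\in\{\omega,\omega_{\mathbb{R}}\}$ with no modification, since the augmented family is just a sum of analytic maps and no partition of unity enters.
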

\begin{proof} Suppose $k\neq \omega$.  Let $A \subset \Gamma^k(\mathcal{B})$  be open subset such that  there exists $\gamma \in A\cap \Gamma^k_{j}(\mathcal{B})$. By the compactness of $[-1,1]^{j}$ and continuity of $\gamma$  there exists a finite subset $\{\lambda_1, \dots, \lambda_p \} \subset [-1,1]^{j}$ and $\epsilon_i >0$, with $i\leq p$, satisfying 
$$[-1,1]^{j}\subset \cup_i B(\lambda_i,\epsilon_i),$$
and such that $|\lambda-\lambda_i| < \epsilon_i$ implies $|\gamma(\lambda)-\gamma(\lambda_i)| < \delta(\gamma(\lambda_i))/2$. Moreover there are  families $\alpha_i \in  \Gamma^k_{j_i}(\mathcal{B})$ such that $\alpha_i(0)=\gamma(\lambda_i)$ and $m_{\alpha_{i,z}}(\Theta)=0$ for every   $|z| <  \delta(\gamma(\lambda_i))$. Here 
$$\alpha_{i,z}(t) = \alpha_i(t)+ z,$$ 
Let $J= j+\sum_i j_i$. Define $\beta \in \Gamma^k_{J}(\mathcal{B})$ as
$$\beta(t,t_1,\dots, t_p) = \gamma(t) + \sum_{i=1}^p\alpha_i(t_i)-\gamma(\lambda_i).$$
Here $t_i \in [-1,1]^{j_i}$ and $t \in [-1,1]^{j}$.  Choose $\eta> 0$ small enough such that if $|t_i| < \eta$ for every $i\leq p$ then 
$$\sum_{i=1}^p |\alpha_i(t_i)-\gamma(\lambda_i)| < \min_i \delta(\gamma(\lambda_i))/2.$$
Given $i_0 \leq p$, fix the entries $t, t_1,\dots,t_{i_0-1},t_{i_0+1},\dots, t_p$, with $|t -\lambda_{i_0}|< \epsilon_{i_0}$ and $|t_i|< \eta$ and   consider the family $\beta$ restricted to the segment 
$$L=\{(t, t_1,\dots, t_{i_0-1},t_{i_0}, t_{i_0+1},\dots, t_p  )  \in T\colon \ |t_{i_0}|< \eta \}$$
Note that in this line
$$\beta(t,t_1,\dots, t_p)= \alpha_{i_0}(t_{i_0})+ z,$$
where 
$$z = \gamma(t)-\gamma(\lambda_{i_0}) + \sum_{i\neq i_0} \alpha_i(t_i)-\gamma(\lambda_i),$$
Since $|z|<   \delta(\gamma(\lambda_{i_0})) $  it follows that $m_{\alpha_{i_0}+z}(\Theta)=0$. By  Fubini's Theorem we have that the set of parameters
$$u \in \{ t \in [-1,1]^j\colon |t- \lambda_{i_0}| < \epsilon_{i_0}\} \times_i \{ t_i \in [-1,1]^{j_i}\colon \ |t_i|< \eta\} $$
such that $\beta(u) \in \Theta$  has zero Lebesgue measure. This holds for every $i_0\leq p$, so we  conclude that the set of parameters
$$u \in  [-1,1]^j  \times_i \{ t_i \in [-1,1]^{j_i}\colon \ |t_i|< \eta\} $$
such that $\beta(u) \in \Theta$  has zero Lebesgue measure.
 Now we can apply Fubini's Theorem once again to conclude that for almost every 
$$(t_1,\dots, t_p) \in \times_i \{ t_i \in [-1,1]^{j_i}\colon \ |t_i|< \eta\} $$
the family $\beta_{t_1,\dots, t_p }\colon [-1,1]^j\mapsto \mathcal{B}$ given by 
$$\beta_{t_1,\dots, t_p }(t)= \beta(t,t_1,\dots, t_p)$$
satisfies $m_{\beta_{t_1,\dots, t_p}}(\Theta)=0$. But $\beta_{t_1,\dots, t_p}$ is a translation of the family  $\gamma$, that is 
$$\beta_{t_1,\dots, t_p }(t)= \gamma(t) + z_{t_1,\dots, t_p },$$
where
$$\lim_{(t_1,\dots,t_p)\rightarrow 0} z_{t_1,\dots, t_p } =0,$$
so we can choose  $(t_1,\dots,t_p)$ sufficiently small such that $$\beta_{t_1,\dots, t_p } \in A \ and \ m_{\beta_{t_1,\dots, t_p}}(\Theta)=0.$$
The proof for the case $k=\omega$ is obtained replacing $[-1,1]$ by $\overline{\mathbb{D}}$ everywhere. 
\end{proof}

\begin{cor} Under  assumption (H)  in  Proposition \ref{generic}, if $\Theta$ is also a countable union of closed  sets then $\Theta$ is a $\Gamma^k(\mathcal{B})$-null set.
\end{cor}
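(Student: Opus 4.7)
The proof is essentially a bookkeeping combination of the two preceding results. Under hypothesis (H), Proposition \ref{generic} delivers, for every $j\geq 0$ and every $k \in \mathbb{N}\cup\{\infty,\omega,\omega_{\mathbb{R}}\}$, a dense subset $\mathcal{S}_j \subset \Gamma^k_j(\mathcal{B})$ with $m_\gamma(\Theta)=0$ for all $\gamma \in \mathcal{S}_j$. This is exactly statement (B) of Proposition \ref{densenull3}.

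The additional hypothesis of the corollary, namely that $\Theta$ is a countable union of closed sets, is precisely the hypothesis under which Proposition \ref{densenull3} asserts the equivalence of (A), (B), (C). Hence (B) implies (A): $\Theta$ is a $\Gamma^k(\mathcal{B})$-null set.

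Since the two ingredients are already in place, there is no real obstacle. The only thing worth verifying is that the hypothesis on $k$ matches: Proposition \ref{generic} is stated for $k \in \mathbb{N}\cup\{\infty,\omega,\omega_{\mathbb{R}}\}$ and Proposition \ref{densenull3} is stated for the same range (with the standing convention that when $k=\omega$ the Banach space $\mathcal{B}$ is complex, and when $k=\omega_{\mathbb{R}}$ it is the real trace of a complexification), so no compatibility issue arises. Thus the corollary follows by a single line: apply Proposition \ref{generic}, then Proposition \ref{densenull3} (in the form $(B)\Rightarrow (A)$).
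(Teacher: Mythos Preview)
Your proof is correct and essentially the same as the paper's. The paper applies Proposition~\ref{generic} to obtain the dense sets $\mathcal{S}_j$, observes that $\cup_j \mathcal{S}_j$ is dense in $\Gamma^k(\mathcal{B})$ (since $\cup_j \Gamma^k_j(\mathcal{B})$ is), and then invokes Lemma~\ref{densenull} directly; you instead route the second step through the implication $(B)\Rightarrow(A)$ of Proposition~\ref{densenull3}, whose proof is precisely that same density-plus-Lemma~\ref{densenull} argument, so the two are identical in substance.
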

\begin{proof} Since $\cup_j \Gamma^k_j(\mathcal{B})$ is dense in $\Gamma^k(\mathcal{B})$, by Proposition \ref{generic} the set $\cup_j \mathcal{S}_j$ is dense in $\Gamma^k(\mathcal{B})$. Now we can apply Lemma \ref{densenull} to conclude that $\Theta$ is a  $\Gamma^k(\mathcal{B})$-null set.\end{proof}

\begin{rem} \label{ka} We can obtain  similar results considering    either the weak or strong Whitney topology on $C^k((-1,1)^j,\mathcal{B})$, the  space of all $C^k$ functions $$\gamma\colon (-1,1)^j \rightarrow \mathcal{B}.$$ 
Indeed if  $\Theta$ is a countable union of closed sets satisfying assumption (H)  in  Proposition \ref{generic} then for a residual set of functions $\gamma \in  C^k((-1,1)^j,\mathcal{B})$ we have $m_{\gamma}(\Theta)=0$. 
\end{rem}

\section{Adapted norms, cones and transversal families.}\label{adapted}
Assume that $\mathcal{R}$ and $\Omega$ satisfy the assumptions of either Theorem \ref{main} or Theorem \ref{main2}, so we are going to deal with the real smooth and complex analytic cases at the same time. Replacing $|v|_1$ by the equivalent norm $|v|_1+ |v|_0$, we can assume without loss of generality that $|v|_1\geq |v|_0$. Denote  by $\pi_G^{h}$ and $\pi_G^u$ the continuous  linear projections  on $E^{h}_G$ and $E^u_G$ such that 
$$v = \pi_G^u(v) + \pi_G^{h}(v).$$
Since $E^u_G \subset \mathcal{B}_1$ we have that $\pi_G^h(\mathcal{B}_1)\subset \mathcal{B}_1$ and that 
$$\pi_G^u, \pi_G^h\colon \mathcal{B}_i\rightarrow \mathcal{B}_i$$
are bounded operators. Define the adapted norms
$$|v |_{G,i} = |\pi^u_G(v)|_i+ |\pi^h_G(v)|_i.$$
Here $i=0,1$.  Replacing $\mathcal{R}$ by some iteration of it we have the following properties
\begin{itemize}
\item[i.] We have that 
$$G \in \Omega \mapsto \pi_G^u $$
and
$$G \in \Omega  \mapsto \pi_G^{h}$$
are continuous with respect to the  $\mathcal{B}_0$ topology.
\item[ii.]  We have  $|v|_{G,0}\leq |v|_{G,1}$ for every $v \in \mathcal{B}_1$ and 
$$|\pi_G^u(v)|_{G,i}\leq |v|_{G,i} \ and \ |\pi_G^{h}(v)|_{G,i}\leq |v|_{G,i}.$$
\item[iii.] There exist $\delta_1> 0$ and $\lambda_G > 0$ such that if  $ v\in E^{h}_G$ then  
$$|D_F\mathcal{R}\cdot v|_{\mathcal{R}G,1}\leq \lambda_G  |v|_{G,0} $$
provided $|F-G|_{G,0} < \delta_1$. 
\item[iv.] There exists $\theta > 1$ such that for every $G \in \Omega$  there exists $\theta_G > \theta \max \{ 1,  \lambda_G \}$ such that if $v \in E^u_G$
$$|D_F\mathcal{R}\cdot v|_{\mathcal{R}G,0}\geq  \theta_G  |v|_{G,0}. $$
provided $|F-G|_{G,0} < \delta_1$. 
\item[v.]  There exists $\Cl{comp44}  > 1$ such that for every $G \in \Omega$ and $v \in E^u_G$ 
$$|v|_{G,1}\leq \Cr{comp44} |v|_{G,0}.$$
\item[vi.] There exists $\Cl{comp} > 1$ such that for every $G \in \Omega$ 
\begin{equation}\label{comp78} \frac{1}{\Cr{comp}} |v|_{i} \leq |v|_{G,i}\leq \Cr{comp} |v|_{i}.\end{equation}
\item[vii.] There exists $\Cl{c55} > 1$ such that for every $G, F \in \Omega$  and $ v\in \mathcal{B}_1$ we have 
\begin{equation}\label{comp70}  |\pi^h_G(v)|_{G,1}\leq \Cr{c55} |v|_{F,1}.\end{equation}
\end{itemize}
\begin{rem} Since $\mathcal{R}$ is a strongly compact $C^2$ map, it follows that 
$$D_G\mathcal{R}\colon \mathcal{B}_0 \rightarrow  \mathcal{B}_0 $$ is a compact linear operator. Indeed  $Im(D_G\mathcal{R})\subset \mathcal{B}_1$ and 
$$D_G\mathcal{R}\colon \mathcal{B}_0 \rightarrow  \mathcal{B}_1 $$
is a bounded linear transformation. 
If $G \in \Omega$ then $D_G\mathcal{R}$ is a compact linear isomorphism between $E^u_G$ and $E^u_{\mathcal{R}G}$. In particular $E^u_G$ has finite dimension. 
\end{rem}

Indeed (i) follows from the continuity of the distributions $E^h_G$ and $E^h_g$ and (ii)  follows from  the definition of  the adapted norms.   Properties (iii) and (iv) follows from the fact that $\Omega$ is a partially hyperbolic invariant set for $\mathcal{R}$ and that $\mathcal{R}$ is a strongly compact operator.  To prove (v), recall that $\mathcal{R}\colon \Omega \rightarrow \Omega$ is onto, so there exists $F \in \Omega$ such that $\mathcal{R}F=G$. By (iii), since $E^u_G$ is finite dimensional, $E^u_G \subset \mathcal{B}_1$ and $D_{F}\mathcal{R}(E^u_{F})=E^u_G$ for every $v \in   E^u_G$ let $w \in E^u_{F}$ be such that $D_{F}\mathcal{R}\cdot w = v$. By (iv) we have 
$$ |w|_{F,0}  \leq \frac{1}{\theta}  |v|_{G,0} .$$
Since $\mathcal{R}$ is $C^1$ and $\Omega$ is compact, there exists $\Cl{normder}$ such that $$\sup_{F\in \Omega}  |D_G \tilde{\mathcal{R}} |_{(\mathcal{R}F,1), (F,0)} < \Cr{normder},$$
so
$$|v|_{G,1}= |D_F \tilde{\mathcal{R}} \cdot w|_{G,1}\leq \Cr{normder} |w|_{F,0} \leq \frac{\Cr{normder}}{\theta}  |v|_{G,0}.$$

We are going to  prove (vi). The case $i=0$ follows from (i) and the compactness of $\Omega$. Then by (v) and the case $i=0$
$$|\pi^u_G(v)|_{G,1}=|\pi^u_G(v)|_{1}\leq   \Cr{comp44}  |\pi^u_G(v)|_{G,0}\leq \Cr{comp}  \Cr{comp44} |v|_{0}\leq \Cr{comp}  \Cr{comp44} |v|_{1}. $$
so
$$|\pi^h_G(v)|_{G,1}=|\pi^h_G(v)|_{1}= |v-\pi^u_G(v)|_{G,1}  \leq   |v|_1+ |\pi^u_G(v)|_{1}  \leq (1+ \Cr{comp}  \Cr{comp44}) |v|_{1}. $$
Consequently $|v|_{G,1}\leq (1+ 2  \Cr{comp}  \Cr{comp44}) |v|_1$. Of course  $|v|_1\leq  |v|_{G,1}.$ This proves (vi) for the case $i=1$. It remains to show (vii). Indeed by (v)  
$$ |\pi^h_G(v)|_{G,1} \leq |v|_{G,1} \leq \Cr{comp} |v|_{1}\leq \Cr{comp}^2 |v|_{F,1}.$$

Now we can define the  unstable cones  $C^u_{\epsilon,i}(G)$, $i=0,1$,  as the set of all $v \in \mathcal{B}_i$ such that $v = u+ w$, with $u \in E^u_G$ and $w\in E^{h}_G$ (note that this implies $u,w \in \mathcal{B}_i$) and moreover
$$|w|_{G,i}\leq \epsilon |u|_{G,i}.$$   Define the  cone $C^u_{\epsilon,(1,0)}(G)$ as the set of  $v \in \mathcal{B}_1$ such that $v = u+ w$, with $u \in E^u_G$ and $w\in E^{h}_G$ and 
$$|w|_{G,1}\leq \epsilon |u|_{G,0}.$$
Of course
$$C^u_{\epsilon,(1,0)}(G) \subset C^u_{\epsilon,0}(G) \cap C^u_{\epsilon,1}(G).$$
and  if $\epsilon' < \epsilon$ then 
$$C^u_{\epsilon',i}(G) \subset C^u_{\epsilon,i}(G).$$
Moreover these cones are forward invariant. Indeed
\begin{equation}\label{incones}D_G\mathcal{R} (C^u_{\epsilon,0}(G))   \subset  C^u_{\frac{\lambda_G}{\theta_G} \epsilon,(1,0)}(\mathcal{R}G) \subset C^u_{\frac{\epsilon}{\theta},0}(\mathcal{R}G)    \subset C^u_{\epsilon,0 }(\mathcal{R}G).\end{equation}
Since we are going to deal with many norms, it is convenient to introduce the following notation.  If $$T\colon L  \rightarrow  \mathcal{B}_i$$ is a linear transformation, where $L$ is a subspace of $\mathcal{B}_j$, with $i,j \in \{0,1\}$, and $G, G' \in \Omega$,   we denote
$$|T|_{(G,j),(G',i)}= \sup \{  |T(v)|_{G',i}\colon \  |v|_{G,j}\leq 1\}.$$
and if 
$$Q\colon L \times L  \rightarrow  \mathcal{B}_i$$
is a symmetric bilinear transformation then 
$$|Q|_{(G,j),(G',i)}= \sup \{  |Q(v,v)|_{G',i}\colon \  |v|_{G,j}\leq 1\}.$$
Define
$$|D\mathcal{R}|_\infty = \sup  \{ |D_F\tilde{\mathcal{R}}|_{(G,0),(\mathcal{R}G,1)}\colon \ G \in \Omega, |F-G|_{G,0}\leq \delta_1\},$$
$$|D^2\mathcal{R}|_\infty = \sup  \{ |D^2_F\tilde{\mathcal{R}}|_{(G,0),(\mathcal{R}G,1)}\colon \ G \in \Omega, |F-G|_{G,0}\leq \delta_1\}.$$
Fix    $\epsilon > 0$  small enough to satisfy
$$\theta_1=\min \{ \theta-2\epsilon, \frac{\theta-\epsilon}{1+\epsilon},  \theta\frac{ (1-\epsilon)^3}{(1+\epsilon)^2} , \big( \frac{(1+\epsilon)^2}{\theta^2 (1-\epsilon)^6}  + \frac{ \epsilon (1+\epsilon)  |D \mathcal{R}|_\infty (1+\epsilon)^3 }{\theta^3(1-\epsilon)^9}  \big)^{-1}     \} > 1.$$
In particular  $D_G\mathcal{R}$ is expanding on unstable cones.  Indeed, if $v \in C^u_0(G):=C^u_{\epsilon,0}(G)$ then 
$$|D_F\mathcal{R}\cdot v|_{\mathcal{R}G,0} \geq \frac{\theta_G (1-  \epsilon/\theta)}{1+\epsilon} |v|_{G,0} \geq  \Cl[e]{exp_cone} |v|_{G,0}$$ 
provided $|F-G|_{G,0} \leq \delta_1$. Let
$$d_\gamma= \sup \{ |D_F\mathcal{R}-D_G\mathcal{R}|_{(G,0),  (\mathcal{R}G,1)} \colon |F-G|_{G,0}\leq \gamma, G \in \Omega \}.$$
Choose $\delta_2 \in (0,\delta_1)$ small enough such that 
$$\Cl[c]{cone_contra}= \frac{\frac{1}{\theta} + \frac{d_{\eta_k}}{\epsilon \theta_1}}{1-\frac{\epsilon}{\theta} - \frac{d_{\delta_2}}{\theta_1}} < 1$$
and 
$$\frac{\Cr{cone_contra}}{1- \Cr{cone_contra} \epsilon} < 1.$$
This is possible because  $\theta - 2\epsilon > 1$. 
Then for every $F$ such that $|F- G|_{G,0}\leq \delta_2$, with $G \in \Omega$, we have
\begin{equation}\label{incones2}D_F\mathcal{R} (C^u_{\epsilon,0}(G))   \subset   C^u_{ \Cr{cone_contra} \epsilon,(1,0)}(\mathcal{R}G)    \subset C^u_{\epsilon,0 }(\mathcal{R}G).\end{equation}

Let   $G \in \Omega$.  Define 
$$\mathbb{B}(F,\delta) = \{\tilde{F} \in \mathcal{B}_0 \colon \ |\tilde{F}-F|_0 \leq \delta \},$$
$$\mathbb{B}^u_G(v_0,\delta) = \{v \in E^u_G \colon \ |v-v_0|_{G,0} \leq \delta \}$$
and 
$$E^h_G+G = \{v+G\colon \ v \in E^h_G\}.$$ 
Choose  $\delta_3  > 0$ small enough  such that $(2+\epsilon) \delta_3 < \delta_2$  and
\begin{equation}\label{compc5}  \Cl{uuu}= \sup  \{  |\tilde{\mathcal{R}}(F)|_1 \ s.t. \ dist_{\mathcal{B}_0}(F,\Omega) \leq  \Cr{comp} (2+\epsilon) \delta_3 \} < \infty.\end{equation} 

Let  $\mathcal{T}^k_i(G,\delta)$, $i=0,1$,  with $\delta \in (0, \delta_3)$, be  the set of all $C^k$ functions 
$$\mathcal{H}\colon \mathbb{B}^u_G(v_0,\delta) \rightarrow E^h_G\cap \mathcal{B}_i+G$$
such that 
$$|D\mathcal{H}|_{(G,0),(G,i)}\leq \epsilon$$
and
$$F_0 = v_0 +\mathcal{H}(v_0) \in W^s_{\delta_3}(G).$$
In particular
\begin{equation}\label{compc55} \sup_{v \in \mathbb{B}^u_G(v_0,\delta)} | v+ \mathcal{H}(v)-G|_{G,0}\leq (2+\epsilon)\delta_3 < \delta_2.\end{equation} 
We will call $F_0$ the {\it base  point} of $\mathcal{H}$. We will denote the graph of $\mathcal{H}$ by $\hat{\mathcal{H}}$ , that is, 
$$\hat{\mathcal{H}}= \{ v+ \mathcal{H}(v)\colon \  v \in \mathbb{B}^u_G(v_0,\delta)  \} .$$

The graph $\hat{\mathcal{H}}$ of a  function $\mathcal{H} \in  \mathcal{T}^k_i(G,\delta)$ will be called a {\bf transversal family}. See Figure 1. Note that 
 $$\mathcal{T}^k_1(G,\delta) \subset \mathcal{T}^k_0(G,\delta).$$
 
\begin{figure}
\includegraphics[scale=0.4]{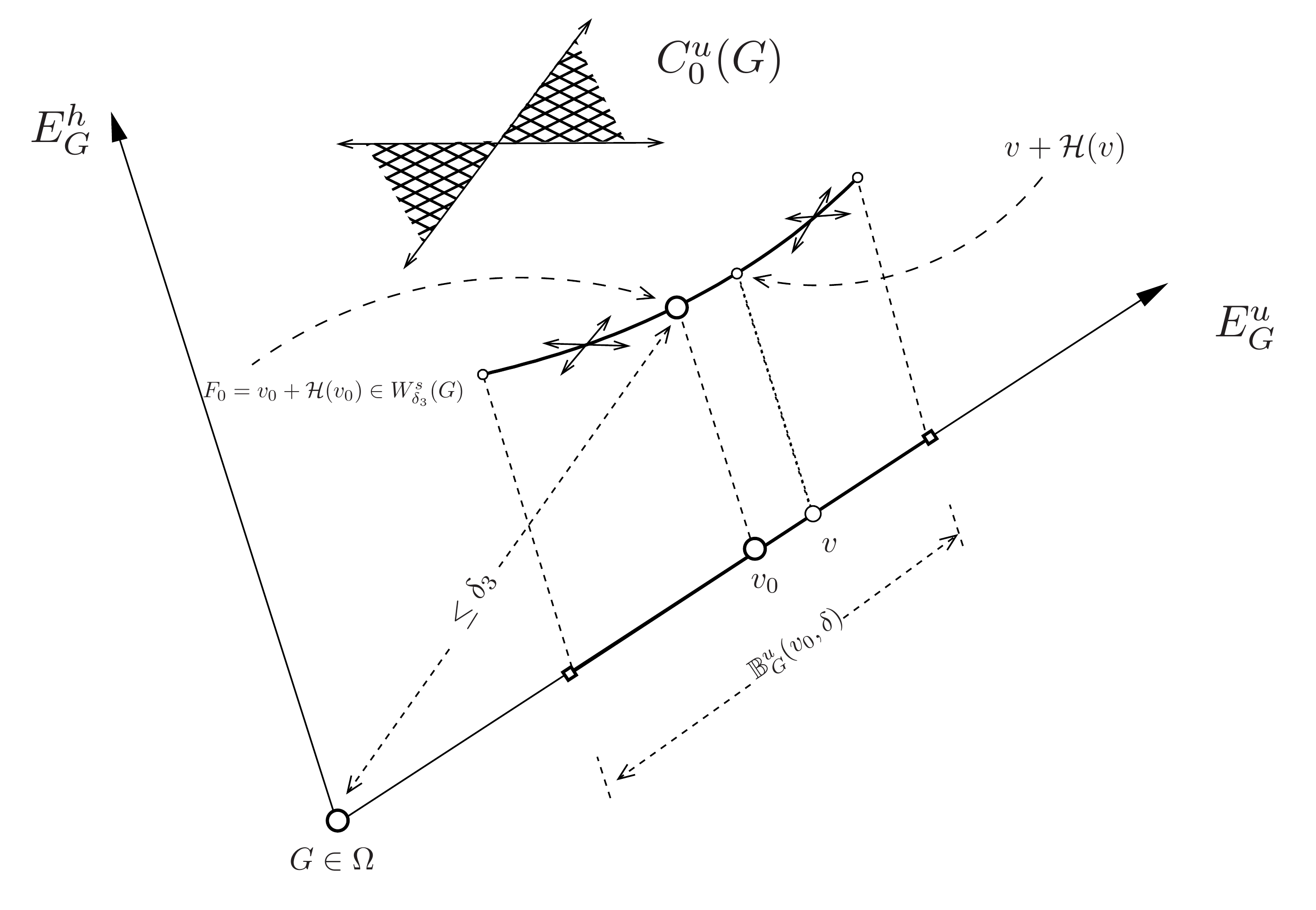}
\caption{How the graph of $\mathcal{H} \in \mathcal{T}^k_i(G,\delta)$  looks like. }
\end{figure}

In Theorems \ref{main}  and \ref{main2}  we assume that  $W^s_{\delta}(\Omega)$ has (either real or complex) transversal empty interior. But indeed  we just need that the Transversal Empty Interior Assumption holds for $n$-dimensional manifolds of the form  $\hat{\mathcal{H}}$. More precisely \\

\noindent {\bf Transversal Empty Interior Assumption}: There is $\delta > 0$ such that for every $\delta'$ small enough the following  holds.  For every $G \in \Omega$ and for every  $C^{1+Lip}$  function $\mathcal{H}\in \mathcal{T}^1_0(G,\delta')$ we have that $\hat{\mathcal{H}}\cap W^s_\delta(\Omega)$ has empty interior in  $\hat{\mathcal{H}}$.\\

If $\mathcal{R}$ and $\Omega$ satisfy the assumptions of Theorem \ref{main2}, then we interpret a $C^{1+Lip}$ function as a $C^{1+Lip}$ complex differentiable function, that is, a complex analytic function.

The definition of $\mathcal{T}^k_i(G,\delta)$ depends on $\delta_3 > 0$. However if  $\mathcal{U}^k_i(G,\delta)$  is a similar collection of functions obtained  replacing $\delta_3$ by a smaller positive value $\gamma < \delta_3$  then $\mathcal{U}^k_i(G,\delta) \subset  \mathcal{T}^k_i(G,\delta)$ and  the transversal empty interior assumption remains true. So we will replace $\delta_3$ by smaller positive values  a few times along this work and we will keep the same notation $\mathcal{T}^k_i(G,\delta)$ for the new family of functions $\mathcal{U}^k_i(G,\delta)$.

\section{Action of $\mathcal{R}$ on transversal families.} \label{action_t}

 In this section we will see  that not only the  operator $\mathcal{R}$ keeps invariant the set of  transversal families, but indeed,  if we keep track only of the piece of the transversal family close to the $\Omega$-limit set then the iterations of a transversal family consists in  bounded subset  in $C^2$ topology. This is similar to the well-know  $\lambda$-Lemma for hyperbolic points  due to  Palis \cite{palis1} (see also de Melo and Palis \cite{mp}). 
 
\begin{prop} \label{trans}  There are  $\Cr{22} > 1$, $\Cl{uu} > 0$  and  $ \Cl[c]{3}, \Cl[c]{1},  \Cl[c]{expa}  \in (0,1)$  with the following property.  For every  $\delta \in (0,\delta_3]$ and  every  $\mathcal{H}_0 \in \mathcal{T}^2_0(G,\delta)$, with $G \in \Omega$ and base  point $F \in W^s_{\delta_3}(G)$ the following holds: 
\begin{itemize} 

\item[A.] For every $x \in \mathbb{B}^u_{\mathcal{R}G}(v_1,\Cr{22}\delta)$, with $v_1 = \pi^u_{\mathcal{R}G}(\mathcal{R}F-\mathcal{R}G)$, there exists a unique $y \in E^h_{\mathcal{R}G}$ such that $\mathcal{R} G+x+y \in \mathcal{R}(\hat{\mathcal{H}}_0)$. 

\item[B.] Define  the function 
$$\mathcal{H}_1\colon \mathbb{B}^u_{\mathcal{R}G}(v_1,\Cr{22}\delta) \rightarrow E^h_{\mathcal{R}G}+\mathcal{R}G$$
as $\mathcal{H}_1(x)=\mathcal{R}G+y$, where $x, y$ are as in item A.  Then $\mathcal{H}_1 \in \mathcal{T}^2_1(\mathcal{R}G,\theta_1 \delta)$, where $\mathcal{R}F$ is a base point of $\mathcal{H}_1$. Moreover 
$$|D\mathcal{H}_1|_{(\mathcal{R}G,0),(\mathcal{R}G,1) }\leq \Cr{3}  \epsilon.$$ 

\item[C.] We have 
\begin{equation} \label{est55} |D^2 \mathcal{H}_1|_{(\mathcal{R}G,0),(\mathcal{R}G,1) } \leq  \Cr{uu} + {\Cr{1}}  |D^2 \mathcal{H}_0|_{G,0}.\end{equation}
\item[D.] We have that  $w+\mathcal{H}_1(w) \in \mathcal{B}_1$ for every $w \in \mathbb{B}^u_{\mathcal{R} G}(v_1,\theta_1\delta)$ and 
\begin{equation}|w+\mathcal{H}_1(w)|_{1} \leq \Cr{uuu}.   \end{equation}
\item[E.] There is a function 
$$\mathcal{R}^{-1}\colon  \hat{ \mathcal{H}}_1   \mapsto \hat{\mathcal{H}}_0$$
such that  $\mathcal{R}\circ \mathcal{R}^{-1}(Y)=Y$ for every $Y \in \hat{\mathcal{H}}_1$.  Its image is an open subset of $\hat{\mathcal{H}}_0$ and $\mathcal{R}^{-1}$   is  a diffeomorphism between $\hat{ \mathcal{H}}_1$ and $\mathcal{R}^{-1}(\hat{\mathcal{H}}_1)$. Moreover 
\begin{equation}\label{cinv} |\mathcal{R}^{-1}(F_1) - \mathcal{R}^{-1}(F_2)|_{G,0}\leq \Cr{expa} |F_1 - F_2|_{\mathcal{R}G,0}.\end{equation}
and  replacing $\mathcal{R}$ by a iteration of it  
\begin{equation}\label{cinv2} |\mathcal{R}^{-1}(F_1) - \mathcal{R}^{-1}(F_2)|_{0}\leq \Cr{expa} |F_1 - F_2|_{0}.\end{equation}
for every $F_1, F_2 \in \hat{\mathcal{H}}_1$.
\end{itemize}
\end{prop}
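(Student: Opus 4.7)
My plan is to implement a graph-transform construction in adapted coordinates, exploiting the cone invariance (\ref{incones2}) and the expansion on unstable cones. Parametrize $\hat{\mathcal{H}}_0$ by its unstable coordinate via $\sigma\colon \mathbb{B}^u_G(v_0,\delta)\to\mathcal{B}_0$, $\sigma(u)=G+u+(\mathcal{H}_0(u)-G)$; the slope condition $|D\mathcal{H}_0|\leq\epsilon$ puts $D\sigma(u)\cdot e$ inside $C^u_{\epsilon,0}(G)$, while (\ref{compc55}) keeps $|\sigma(u)-G|_{G,0}\leq\delta_2$, so (\ref{incones2}) gives $D_{\sigma(u)}\mathcal{R}\cdot D\sigma(u)\cdot e\in C^u_{\Cr{cone_contra}\epsilon,(1,0)}(\mathcal{R}G)$. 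Define $\phi(u)=\pi^u_{\mathcal{R}G}(\mathcal{R}\sigma(u)-\mathcal{R}G)$. Because the unstable projection of a tangent vector in this cone satisfies the $\Cr{exp_cone}$-expansion, $\phi$ is a $C^2$ diffeomorphism onto an open set containing $\mathbb{B}^u_{\mathcal{R}G}(v_1,\Cr{22}\delta)$ for any preselected $\Cr{22}\in(1,\Cr{exp_cone})$, which proves A. Setting $\mathcal{H}_1(x)=\mathcal{R}G+\pi^h_{\mathcal{R}G}(\mathcal{R}\sigma(\phi^{-1}(x))-\mathcal{R}G)$, the same cone inclusion converts directly into the slope bound $|D\mathcal{H}_1|_{(\mathcal{R}G,0),(\mathcal{R}G,1)}\leq\Cr{cone_contra}\epsilon$; the shadowing property of the base point is automatic since $F\in W^s_{\delta_3}(G)$ yields $\mathcal{R}F\in W^s_{\delta_3}(\mathcal{R}G)$. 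This proves B with $\Cr{3}=\Cr{cone_contra}$.

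The main obstacle is the $C^2$ bookkeeping in part C. I plan to differentiate the identity $\pi^h_{\mathcal{R}G}(\mathcal{R}\sigma(u)-\mathcal{R}G)=\mathcal{H}_1(\phi(u))-\mathcal{R}G$ twice in $u$ and solve for $D^2\mathcal{H}_1$, obtaining schematically
\begin{equation*}
D^2\mathcal{H}_1(D\phi\cdot e,D\phi\cdot e)=\pi^h_{\mathcal{R}G}\bigl(D^2\mathcal{R}(D\sigma\cdot e,D\sigma\cdot e)+D\mathcal{R}\cdot D^2\mathcal{H}_0(e,e)\bigr)-D\mathcal{H}_1\cdot D^2\phi(e,e),
\end{equation*}
together with the analogous identity for $D^2\phi$ using $\pi^u_{\mathcal{R}G}$. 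Substituting $w=D\phi(u)\cdot e$ so that $|e|_{G,0}\leq\Cr{exp_cone}^{-1}|w|_{\mathcal{R}G,0}$, I get a constant term bounded by $|D^2\mathcal{R}|_\infty$ and two contributions proportional to $|D^2\mathcal{H}_0|$, each carrying at least a factor $|D\mathcal{R}|_\infty/\Cr{exp_cone}^2$; the term $D\mathcal{H}_1\cdot D^2\phi$ also picks up an extra $\epsilon$ from the slope bound just proved in B. The explicit choice of $\theta_1$ at the end of Section \ref{adapted} was engineered precisely so that the resulting coefficient of $|D^2\mathcal{H}_0|$ is strictly below one, yielding $\Cr{1}<1$. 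Care is needed to work consistently in the adapted norms $|\cdot|_{G,i}$, to route derivatives through the strong target norm of $\tilde{\mathcal{R}}$ using property (iii), and to compare projections at different base points via (\ref{comp78}) and (\ref{comp70}).

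Parts D and E are then routine consequences. For D, every point of $\hat{\mathcal{H}}_1$ is of the form $\mathcal{R}\sigma(\phi^{-1}(w))$; because $\mathcal{R}=i\circ\tilde{\mathcal{R}}$, this point lies in $\mathcal{B}_1$, and the bound (\ref{compc5}) applied with $F=\sigma(\phi^{-1}(w))$, which by (\ref{compc55}) is within $\Cr{comp}(2+\epsilon)\delta_3$ of $\Omega$ in $|\cdot|_0$, gives $|\mathcal{R}\sigma(\phi^{-1}(w))|_1\leq\Cr{uuu}$; reading off the splitting $w+\mathcal{H}_1(w)$ differs from this point only by the additive constant $\mathcal{R}G\in\mathcal{B}_1$, so the $\Cr{uuu}$ bound transfers immediately after possibly enlarging it. For E, the inverse is
\begin{equation*}
\mathcal{R}^{-1}(y)=\sigma\bigl(\phi^{-1}\bigl(\pi^u_{\mathcal{R}G}(y-\mathcal{R}G)\bigr)\bigr),
\end{equation*}
a $C^2$ map with open image because $\phi$ is a $C^2$ diffeomorphism; the contraction (\ref{cinv}) follows by combining the $\Cr{exp_cone}$-expansion of $\phi$ with the $(1+\epsilon)$-Lipschitz bound for $\sigma$ derived from $|D\mathcal{H}_0|\leq\epsilon$, and (\ref{cinv2}) is obtained by passing to a sufficiently high iterate of $\mathcal{R}$ to absorb the $\Cr{comp}$ factors from (\ref{comp78}) when translating between $|\cdot|_0$ and $|\cdot|_{G,0}$.
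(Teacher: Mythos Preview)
Your proposal is correct and follows essentially the same graph-transform argument as the paper: write $\mathcal{R}$ on $\hat{\mathcal{H}}_0$ as $\phi_u\oplus\phi_h$ (your $\phi$ and its horizontal companion), invert $\phi_u$ using the cone expansion to get $\mathcal{H}_1=\phi_h\circ\phi_u^{-1}+\mathcal{R}G$, and take $\mathcal{R}^{-1}=\sigma\circ\phi^{-1}\circ\pi^u_{\mathcal{R}G}$ for part~E. One caution in your sketch of part~C: the dominant $|D^2\mathcal{H}_0|$-contribution is \emph{not} controlled by $|D\mathcal{R}|_\infty/\Cr{exp_cone}^2$ (which need not be small) but by $\lambda_G/\theta_G^2<1/\theta^2$, coming from property~(iii) applied to $D\mathcal{R}\cdot D^2\mathcal{H}_0(e,e)\in D\mathcal{R}(E^h_G)$ --- this is exactly the ``routing through property~(iii)'' you flag, and it, together with the extra $\epsilon$ on the second term, is what makes $\Cr{1}<1$.
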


\begin{rem} If $\mathcal{R}$ and $\Omega$ satisfy the assumptions of Theorem \ref{main2}, one needs to consider $\mathcal{T}^2_0(G,\delta)$ as a set of $C^2$ complex differentiable functions, that is, complex analytic functions.  In this case  the estimates for the first and second derivatives of $\mathcal{H}_1$ are not necessary. \end{rem}

\begin{proof} Note that
\begin{equation}\label{d1}  (1-\epsilon) |v|_{G,0}\leq       |v + D_x \mathcal{H}_0\cdot v|_{G,0}\leq (1+\epsilon) |v|_{G,0}.\end{equation}
Let $\pi\colon \mathcal{B}_1 \mapsto \mathcal{B}_1$ be  a linear transformation and let 
$$\phi(x)= \pi(\tilde{\mathcal{R}}(x+\mathcal{H}_0(x)) - \tilde{\mathcal{R}}G).$$
Then
\begin{equation} \label{phi1} D_x\phi \cdot v = \pi[D_{x+\mathcal{H}_0(x)}\tilde{\mathcal{R}}\cdot(v + D_x \mathcal{H}_0\cdot v)],\end{equation}
\begin{equation} \label{phi2}  D_x^2 \phi \cdot v^2 =  \pi[ D^2_{x+\mathcal{H}_0(x)} \tilde{\mathcal{R}}\cdot(v + D_x \mathcal{H}_0\cdot v)^2 + D_{x+\mathcal{H}_0(x)}\tilde{\mathcal{R}}\cdot D^2_x\mathcal{H}_0\cdot v^2 ],\end{equation}
Suppose from now on that $|\pi|_{1}\leq 1$. We have
\begin{equation}  |D_x\phi|_{(G,0), (\mathcal{R}G,1)} \leq |D \tilde{\mathcal{R}}|_\infty (1+\epsilon).  \end{equation}
Moreover since 
$$D^2_x\mathcal{H}_0\cdot v^2  \in E^h_G,$$
and 
\begin{equation}\label{contra} |D_{x+\mathcal{H}_0(x)}\mathcal{R} \cdot w|_{\mathcal{R}G,1} \leq \lambda_G |w|_{G,0}\end{equation}
for every $w \in E^h_G$,  by (\ref{phi2}) and (\ref{contra}) we have 
\begin{eqnarray}\label{df1}
|D_x^2 \phi|_{(G,0), (\mathcal{R}G,1)}  &\leq&  |D^2\tilde{\mathcal{R}}|_\infty (1+\epsilon)^2 + \lambda_G |D^2 \mathcal{H}_0|_{G,0}  \nonumber \\
&\leq& \Cl{y}    +  \lambda_G |D^2 \mathcal{H}_0|_{G,0}.
\end{eqnarray}
Define
$$\phi_u(x)= \pi_{\mathcal{R}G}^u (\mathcal{R}(x+\mathcal{H}_0(x))-\mathcal{R}(G)).$$
$$\phi_h(x)= \pi_{\mathcal{R}G}^h (\mathcal{R}(x+\mathcal{H}_0(x))-\mathcal{R}(G)).$$
Note that for each $w  \in C^u_{(1,0)}(\mathcal{R}G)$ we have
\begin{equation}\label{d2} |\pi^u_{\mathcal{R}G}(w)|_{\mathcal{R}G,0} \geq (1-\epsilon) |w|_{\mathcal{R}G,0}\end{equation} 
and since $v + D_x \mathcal{H}_0\cdot v \in C^u_0(G)$ we have
\begin{equation}\label{d3} |D_{x+\mathcal{H}_0(x)}\mathcal{R}\cdot(v + D_x \mathcal{H}_0\cdot v)|_{\mathcal{R}G,0} \geq \frac{\theta_G (1-  \epsilon)}{1+\epsilon}  |v + D_x \mathcal{H}_0\cdot v|_{G,0}.\end{equation} 
It follows from (\ref{d1}), (\ref{d2}) and (\ref{d3}) that 
$$|D_x \phi_u\cdot v|_{\mathcal{R}G,0}\geq  \theta_G \frac{ (1-\epsilon)^3}{1+\epsilon} |v|_{G,0} \geq  \theta\frac{ (1-\epsilon)^3}{1+\epsilon} |v|_{G,0}  ,$$
so in particular $\phi_u$ is a diffeomorphism on $\mathbb{B}^u_G(v_0,\delta)$, its image contains $\mathbb{B}^u_{\mathcal{R}G} (v_1, \Cr{22} \delta)$, where $v_1 = \pi^u_{\mathcal{R}G}(\mathcal{R}F-\mathcal{R}G)$ and  $$\Cr{22}= \theta\frac{ (1-\epsilon)^3}{1+\epsilon}    > 1.$$ 
Consequently 
$$\phi^{-1}_u\colon \mathbb{B}^u_{\mathcal{R}G} (v_1,\Cr{22} \delta) \mapsto \mathbb{B}^u_G(v_0,\delta)$$
is  well defined and it is a  contraction on $ \mathbb{B}^u_{\mathcal{R}G} (v_1, \Cr{22} \delta)$, since 
\begin{equation}\label{df2} |D_x \phi^{-1}_u|_{(\mathcal{R}G,0),(G,0)}  \leq  \frac{1+\epsilon}{\theta_G (1-\epsilon)^3} \leq  \frac{1}{\Cr{22}}   < 1.\end{equation}
Note that since $\phi_u^{-1}\circ \phi_u(x)=x$ we have
$$D_{\phi_u(x)} \phi_u^{-1} \cdot D_x \phi_u \cdot w = w,$$
so if $w=(D_x\phi_u)^{-1}\cdot v$
\begin{equation} \label{h1} D_{\phi_u(x)} \phi_u^{-1} \cdot v  = (D_x\phi_u)^{-1}\cdot v.\end{equation}
Moreover 
\begin{equation} D_{\phi_u(x)}^2 \phi_u^{-1} \cdot (D_x \phi_u\cdot w)^2 + D_{\phi_u(x)} \phi_u^{-1}  \cdot D_x^2 \phi_u \cdot w^2 = 0,\end{equation} 
so
\begin{equation} \label{h2} D_{\phi_u(x)}^2 \phi_u^{-1} \cdot v^2 = - D_{\phi_u(x)} \phi_u^{-1}  \cdot D_x^2 \phi_u \cdot ((D_x\phi_u)^{-1}\cdot v)^2.\end{equation} 
By (\ref{df1})  and (\ref{df2}) 
\begin{eqnarray}
|D_{\phi_u(x)}^2 \phi_u^{-1}| _{(\mathcal{R}G,0),(G,0)} &\leq& \frac{(1+\epsilon)^3}{\theta_G^3 (1-\epsilon)^9}     \big( \Cr{y}    + \lambda_G |D^2 \mathcal{H}_0|_{G,0}. \big) \nonumber \\
&\leq& \Cl{xx} +   \frac{\lambda_G (1+\epsilon)^3  }{\theta_G^3 (1-\epsilon)^9}   |D^2 \mathcal{H}_0|_{G,0}. \nonumber \\
&\leq& \Cr{xx} +   \Cl[c]{segunda} |D^2 \mathcal{H}_0|_{G,0},
\end{eqnarray} 
where 
$$\Cr{segunda}= \frac{(1+\epsilon)^3  }{ \theta^3(1-\epsilon)^9} < 1$$
Note that for each $w  \in C^u_{(1,0)}(\mathcal{R}G)$ we have
\begin{equation}\label{d2h} |\pi^h_{\mathcal{R}G}(w)|_{\mathcal{R}G,1} \leq  \epsilon |w|_{\mathcal{R}G,0},\end{equation} 
so since
$$D_{x+ \mathcal{H}_0(x)} \mathcal{R}\cdot( v + D_x\mathcal{H}_0\cdot v) \in C^u_{(1,0)}(\mathcal{R}G)$$
we obtain 
$$|D\phi_h|_{(G,0),(\mathcal{R}G,1)}\leq \epsilon |D \mathcal{R}|_\infty  (1+\epsilon).$$
Then
$$\mathcal{H}_1 = \phi_h\circ \phi_u^{-1}+\mathcal{R}G$$
is well defined on $\mathbb{B}^u_{\mathcal{R}G} (v_1, \Cl[e]{22}  \delta)$. Note that

\begin{equation}\label{h11} D_x \mathcal{H}_1\cdot v  = D_{\phi_u^{-1}(x)} \phi_h \cdot D_x \phi_u^{-1} \cdot v.\end{equation}

\begin{equation}\label{h22} D_x^2 \mathcal{H}_1\cdot v^2  = D_{\phi_u^{-1}(x)}^2 \phi_h \cdot (D_x \phi_u^{-1} \cdot v)^2   + D_{\phi_u^{-1}(x)} \phi_h \cdot D_x^2 \phi_u^{-1} \cdot v^2.\end{equation}
Moreover
\begin{equation} \label{e33}  |D\mathcal{H}_1|_{(\mathcal{R}G,0),(\mathcal{R}G,1) } \leq \epsilon. 
\end{equation} 
Indeed, given $v$, let $w$ be
$$w= D_x \phi_u^{-1} \cdot v,$$
that is, if 
$$x=\phi_u(y)$$
then 
$$v= \pi_{\mathcal{R}G}^u(D_{y+\mathcal{H}_0(y)}\mathcal{R}\cdot(w + D_y \mathcal{H}_0\cdot w)).$$
Since
\begin{equation}\label{belong}  D_{y+\mathcal{H}_0(y)}\mathcal{R}\cdot(w + D_y \mathcal{H}_0\cdot w) \in C^u_{\Cr{cone_contra} \epsilon, (1,0) }(\mathcal{R}G)\end{equation}
we have that 
\begin{equation}\label{gb1} |v|_{\mathcal{R}G,0} \geq (1-\Cr{cone_contra} \epsilon)|D_{y+\mathcal{H}_0(y)}\mathcal{R}\cdot(w + D_y \mathcal{H}_0\cdot w)|_{\mathcal{R}G,0}.\end{equation}
Moreover
$$D_{\phi_u^{-1}(x)} \phi_h \cdot D_x \phi_u^{-1} \cdot v =  \pi_{\mathcal{R}G}^h(D_{y+\mathcal{H}_0(y)}\mathcal{R}\cdot(w + D_y \mathcal{H}_0\cdot w))$$
Due (\ref{belong}) we got 
\begin{equation}\label{gb2} |D_{\phi_u^{-1}(x)} \phi_h \cdot D_x \phi_u^{-1} \cdot v|_{\mathcal{R}G,1}\leq  \epsilon \Cr{cone_contra}  |D_{y+\mathcal{H}_0(y)}\mathcal{R}\cdot(w + D_y \mathcal{H}_0\cdot w)|_{\mathcal{R}G,0},\end{equation} 
so due (\ref{gb1})  and (\ref{gb2})
$$|D_{\phi_u^{-1}(x)} \phi_h \cdot D_x \phi_u^{-1} \cdot v|_{\mathcal{R}G,1} \leq \frac{ \epsilon \Cr{cone_contra}}{1-\epsilon \Cr{cone_contra}} |v|_{G,0}   $$
Recall we choose $\epsilon$ and $\delta_2$ such that 
$$ \Cr{3}:=  \frac{\Cr{cone_contra}}{1-\epsilon \Cr{cone_contra}}  < 1,$$
we obtain (\ref{e33}).  Indeed 
\begin{equation}  |D\mathcal{H}_1|_{(\mathcal{R}G,0),(\mathcal{R}G,1) } \leq \Cr{3} \epsilon. 
\end{equation} 
Note that $\mathcal{R}F \in W^s_{\delta_3}(\mathcal{R}G)$ and we have
$${\Cr{1}}=  \frac{(1+\epsilon)^2}{\theta^2 (1-\epsilon)^6}  + \frac{ \epsilon (1+\epsilon)  |D \mathcal{R}|_\infty (1+\epsilon)^3 }{\theta^3(1-\epsilon)^9}    < 1.$$
So by (\ref{h22}) we obtain
\begin{eqnarray}\label{h2e} |D_x^2 \mathcal{H}_1|_{(\mathcal{R}G,0),(\mathcal{R}G,1) } &\leq&   \frac{(1+\epsilon)^2}{\theta_G^2 (1-\epsilon)^6}  \big( \Cr{y}  
 +  \lambda_G |D^2 \mathcal{H}_0|_{G,0}  \big)  \nonumber \\   &+&   \epsilon (1+\epsilon)  |D \mathcal{R}|_\infty\big( \Cr{xx} +  \frac{(1+\epsilon)^3  }{ \theta^3 (1-\epsilon)^9}|D^2 \mathcal{H}_0|_{G,0}\big) \nonumber \\
 &\leq& \Cr{uu} +  \Big(  \frac{\lambda_G (1+\epsilon)^2}{\theta_G^2 (1-\epsilon)^6}  + \frac{ \epsilon  |D \mathcal{R}|_\infty (1+\epsilon)^4 }{\theta^3 (1-\epsilon)^9}   \Big)  |D^2 \mathcal{H}_0|_{G,0} \nonumber \\
 &\leq& \Cr{uu} + {\Cr{1}}  |D^2 \mathcal{H}_0|_{G,0}
 \end{eqnarray} 
 This proves A, B and C. Property D follows from (\ref{compc5}),(\ref{compc55}) and that $\hat{\mathcal{H}}_1 \subset \mathcal{R}\hat{\mathcal{H}}_0$. To show property $E$ define
  $$\mathcal{R}^{-1}(Y)=  \phi_u^{-1} \circ \pi^u_{\mathcal{R}G}(Y)   +  \mathcal{H}_0\circ  \phi_u^{-1} \circ \pi^u_{\mathcal{R}G}(Y)$$        
for every $Y \in \hat{\mathcal{H}}_1.$ Note that 
 $$\mathcal{R}(X)=    \phi_u\circ  \pi^u_G(X) + \mathcal{H}_1\circ  \phi_u\circ  \pi^u_G(X)$$
 for every $X \in \mathcal{R}^{-1}(\hat{\mathcal{H}}_1)$ so we have $\mathcal{R}\circ \mathcal{R}^{-1}(Y)=Y$ for every $Y \in \hat{\mathcal{H}}_1$.   Since $\phi_u$, $\phi_u^{-1}$, $\mathcal{H}_0$ and $\mathcal{H}_1$ are $C^1$ functions we conclude that $\mathcal{R}^{-1}$ is a diffeomorphism on $\hat{\mathcal{H}}_1$. Moreover for every $F_1,F_2 \in \hat{\mathcal{H}}_1$ we have by (\ref{df2}) 
 \begin{eqnarray} |\mathcal{R}^{-1}(F_1) - \mathcal{R}^{-1}(F_1) |_{G,0}&\leq&  (1+\epsilon)| \phi_u^{-1} \circ \pi^u_{\mathcal{R}G}(F_1) - \phi_u^{-1} \circ \pi^u_{\mathcal{R}G}(F_2)|_{G,0} \nonumber \\
 &\leq&  (1+\epsilon)| \phi_u^{-1} \circ \pi^u_{\mathcal{R}G}(F_1) - \phi_u^{-1} \circ \pi^u_{\mathcal{R}G}(F_2)|_{G,0} \nonumber \\ 
  &\leq&  \frac{1+\epsilon}{\Cr{22}}|\pi^u_{\mathcal{R}G}(F_1) - \pi^u_{\mathcal{R}G}(F_2)|_{\mathcal{R}G,0} \nonumber \\ 
  &\leq&  \frac{1+\epsilon}{\Cr{22}}|F_1 - F_2|_{\mathcal{R}G,0}.
 \end{eqnarray} 
Choose
 $$\Cr{expa}= \frac{1+\epsilon}{\Cr{22}}=\frac{ (1+\epsilon)^2 }{\theta (1-\epsilon)^3 } < 1.$$
Replacing $\mathcal{R}$ by an iteration of it, by (\ref{cinv}) we have (\ref{cinv2}) for every $F_1, F_2 \in \hat{\mathcal{H}}_1$.
\end{proof}

From now on replace  $\mathcal{R}$ by an iteration of it such that  (\ref{cinv2}) holds. Let $\mathcal{H}_0$, $\mathcal{H}_1$ be as in Proposition \ref{trans}. We denote $\mathcal{H}_1= \hat{\mathcal{R}}_G\mathcal{H}_0$. 

\begin{cor} \label{cordon} \label{cor11} Let  $\Cr{22} > 1$ be   as in Proposition \ref{trans}. There exists $\Cl{q1} > 0$ such that the following holds: Given $\Cl{q3} > 0$ and $\delta \in (0,\delta_3]$ there exists $k_2\geq 1$ such that for every  $G \in \Omega$ and $\mathcal{H}_0 \in \mathcal{T}^2_0(G,\delta)$  with base point $F \in W^s_{\delta_3}(G)$  that satisfies  
$$|D^2 \mathcal{H}_0|_{G,0}\leq \Cr{q3}$$
then we have that  
$$\mathcal{H}_k=\hat{\mathcal{R}}_{\mathcal{R}^kG} \mathcal{H}_{k-1} \in \mathcal{T}^2_1(\mathcal{R}^kG,\eta_k)$$
are well defined, with base point $\mathcal{R}^kF \in W^s_{\delta_3}(\mathcal{R}^kG)$, where $\delta_0=\delta$, and 
$$\eta_k~=~\min \{\Cr{22} \eta_{k-1},\delta_3 \}$$ for $k > 0$. Moreover for every $k\geq 1$ 
\begin{equation}\label{k1} |D\mathcal{H}_k|_{(\mathcal{R}G,0), (\mathcal{R}G,1)}\leq \Cr{3} \epsilon\end{equation} 
 and
$$|w+\mathcal{H}_k(w)|_{\mathcal{R}^kG,1} < \Cr{uuu}.$$
Here
$$v_k=\pi^u_{\mathcal{R}^kG}(\mathcal{R}^kF- \mathcal{R}^kG).$$
 Furthermore 
 for every  $k\geq k_2$ we have
\begin{equation}\label{k2}  |D^2 \mathcal{H}_k|_{(\mathcal{R}G,0), (\mathcal{R}G,1)} \leq  \frac{\Cr{q1}}{2}.\end{equation} 
\end{cor}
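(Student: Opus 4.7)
The plan is to prove this corollary by straightforward induction on $k$, invoking Proposition \ref{trans} at each step. The key observation is that $\mathcal{T}^2_1(\mathcal{R}^j G, \eta_j) \subset \mathcal{T}^2_0(\mathcal{R}^j G, \eta_j)$, because $|\cdot|_{\mathcal{R}^j G, 0}\leq |\cdot|_{\mathcal{R}^j G, 1}$ by property (ii), so $|D\mathcal{H}|_{(\cdot,0),(\cdot,0)}\leq |D\mathcal{H}|_{(\cdot,0),(\cdot,1)}\leq \epsilon$. Thus the output of one application of Proposition \ref{trans} is a legitimate input for the next.

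The base case $k=0$ is the hypothesis. For the inductive step, assume $\mathcal{H}_{k-1}\in \mathcal{T}^2_1(\mathcal{R}^{k-1}G,\eta_{k-1})$ (interpreted as $\mathcal{T}^2_0$ when $k=1$), with base point $\mathcal{R}^{k-1}F$. Note that $\mathcal{R}^{k-1}F\in W^s_{\delta_3}(\mathcal{R}^{k-1}G)$ follows automatically from $F\in W^s_{\delta_3}(G)$ and the obvious forward invariance $\mathcal{R}\,W^s_\delta(G)\subset W^s_\delta(\mathcal{R}G)$. Applying Proposition \ref{trans} yields $\mathcal{H}_k = \hat{\mathcal{R}}_{\mathcal{R}^{k-1}G}\mathcal{H}_{k-1}$ defined on $\mathbb{B}^u_{\mathcal{R}^k G}(v_k, \Cr{22}\eta_{k-1})$, with base point $\mathcal{R}^kF$ and $|D\mathcal{H}_k|_{(\mathcal{R}^kG,0),(\mathcal{R}^kG,1)}\leq \Cr{3}\epsilon$. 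Restricting the domain to $\mathbb{B}^u_{\mathcal{R}^k G}(v_k,\eta_k)$, where $\eta_k=\min\{\Cr{22}\eta_{k-1},\delta_3\}$, places $\mathcal{H}_k$ inside $\mathcal{T}^2_1(\mathcal{R}^k G,\eta_k)$ and gives (\ref{k1}). The uniform $|\cdot|_1$-bound $|w+\mathcal{H}_k(w)|_{\mathcal{R}^kG,1}<\Cr{uuu}$ is Proposition \ref{trans}(D) combined with the norm comparison (vi).

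For the second-derivative bound, set $a_k=|D^2\mathcal{H}_k|_{(\mathcal{R}^k G,0),(\mathcal{R}^k G,1)}$ for $k\geq 1$ and $a_0=|D^2\mathcal{H}_0|_{G,0}\leq \Cr{q3}$. Applying Proposition \ref{trans}(C) to $\mathcal{H}_{k-1}$, and using $|\cdot|_{\cdot,0}\leq |\cdot|_{\cdot,1}$ to dominate $|D^2\mathcal{H}_{k-1}|_{(\cdot,0),(\cdot,0)}$ by $a_{k-1}$, one obtains the linear recurrence
\begin{equation*}
a_k \leq \Cr{uu} + \Cr{1}\, a_{k-1}, \qquad k\geq 1,
\end{equation*}
where $\Cr{1}<1$ is fixed in the definition of $\theta_1$. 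Iterating gives $a_k \leq \Cr{1}^k\,\Cr{q3} + \Cr{uu}/(1-\Cr{1})$. Choose the constant $\Cr{q1}=4\Cr{uu}/(1-\Cr{1})$ once and for all (independent of $\Cr{q3}$ and $\delta$); then any integer $k_2$ large enough that $\Cr{1}^{k_2}\Cr{q3}\leq \Cr{uu}/(1-\Cr{1})$ guarantees $a_k\leq \Cr{q1}/2$ for all $k\geq k_2$, proving (\ref{k2}).

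No real obstacles are expected: the content of Proposition \ref{trans} was precisely engineered so that (a) the class $\mathcal{T}^2_1$ of transversal families is forward invariant under the graph transform, (b) the first-derivative bound $\Cr{3}\epsilon$ is uniform, and (c) the second-derivative estimate has contractive part $\Cr{1}<1$. The only subtlety worth flagging is the inclusion $\mathcal{T}^2_1\subset \mathcal{T}^2_0$ used at each step to recycle the output as the next input, and the minor bookkeeping between the norms $|\cdot|_{\cdot,0}$ and $|\cdot|_{\cdot,1}$ on the right-hand side of the second-derivative recurrence.
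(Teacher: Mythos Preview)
Your proof is correct and follows essentially the same route as the paper: iterate Proposition \ref{trans} to get $\mathcal{H}_k\in\mathcal{T}^2_1(\mathcal{R}^kG,\eta_k)$ with the uniform first-derivative and $|\cdot|_1$ bounds, then telescope the linear recurrence from (\ref{est55}) to bound $|D^2\mathcal{H}_k|$ by $\Cr{1}^k\Cr{q3}+\Cr{uu}/(1-\Cr{1})$ and choose $\Cr{q1}>2\Cr{uu}/(1-\Cr{1})$. Your explicit handling of the inclusion $\mathcal{T}^2_1\subset\mathcal{T}^2_0$ and the passage from $|D^2\mathcal{H}_{k-1}|_{(\cdot,0),(\cdot,0)}$ to $a_{k-1}$ via $|\cdot|_0\le|\cdot|_1$ makes the bookkeeping cleaner than the paper's terse version.
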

\begin{proof} By Proposition \ref{trans}.B  it  follows that $\mathcal{H}_k \in \mathcal{T}^2_1(\mathcal{R}^kG,\eta_k)$ for every $k\geq 1$. In particular if $k \geq k_0= \min\{k \in \mathbb{N} \ s.t. \ \theta_1^k\delta > \delta_3\}+1$ we have that $\mathcal{H}_k \in \mathcal{T}^2_1(\mathcal{R}^kG,\delta_3)$.  By (\ref{est55}) we have
$$ |D^2 \mathcal{H}_{k}|_{(\mathcal{R}G,0), (\mathcal{R}G,1)} \leq  \Cr{uu} + {\Cr{1}}  |D^2 \mathcal{H}_{k-1}|_{G,0},$$
for every $k$, so
$$ |D^2 \mathcal{H}_{k}|_{(\mathcal{R}G,0), (\mathcal{R}G,1)} \leq  \sum_{j=0}^{k-1} \Cr{uu}  {\Cr{1}}^j +  {\Cr{1}}^k  |D^2 \mathcal{H}_{0}|_{G,0}\leq  \frac{\Cr{uu}}{1- {\Cr{1}}}  +  {\Cr{1}}^k \Cr{q3},$$
Choose $\Cr{q1}$ such that
$$\frac{\Cr{uu}}{1- {\Cr{1}}} < \frac{\Cr{q1}}{2}$$
and $k_1 \geq k_0$ satisfying 
$$ \frac{\Cr{uu}}{1- {\Cr{1}}}   + {\Cr{1}}^{k_1} \Cr{q3} <   \frac{\Cr{q1}}{2}.$$
Then (\ref{k2}) holds for every $k\geq k_1$.   \end{proof}

Let $k_2$ given by Corollary  \ref{cordon} when we choose  $ \Cr{q3}=\Cr{q1}$. From now on we  replace $\mathcal{R}$ by $\mathcal{R}^{k_2}$.

\section{Lower bound to the measure of  parameters outside $W^s_{\delta_3}(\Omega)$.}\label{lower_bound}

\begin{figure}
\includegraphics[scale=0.35]{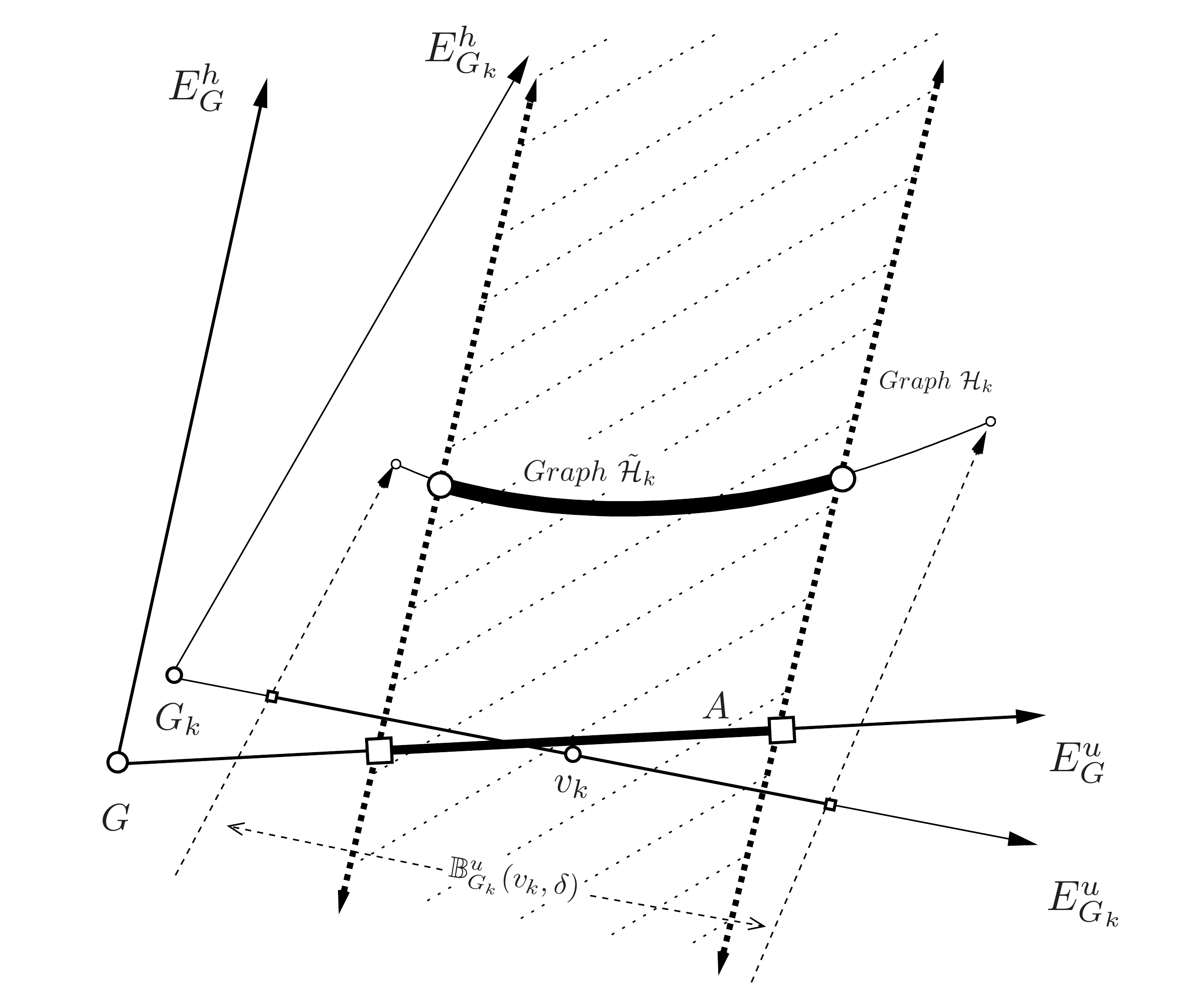}
\caption{Convergence of transversal families. }
\end{figure}

We say that  a sequence of $C^j$ functions $\mathcal{H}_k \in \mathcal{T}^j_0(G_k,\delta)$, $\delta \in (0,\delta_3)$, with base point $F_k$ converges to a $C^j$ function $\mathcal{H} \in \mathcal{T}^j_0(G,\delta)$ with base point $F$  if 
$$\lim_k G_k=G, \lim_k F_k=F$$
and for  every open set $A$ compactly contained in $\mathbb{B}^u_G(v_\infty,\delta)$ there is $k_0$ such that for $k > k_0$ the 
set 
$$\{ w+\mathcal{H}_k(w)\colon \  w \in   \mathbb{B}^u_{G_k}(v_k,\delta)  \}\cap \{ u+v+G\colon \ u \in A, \ v \in E^h_G   \}$$
is the graph (See Figure 2 ) of a $C^j$ function 
$$\tilde{\mathcal{H}}_k\colon A \rightarrow E^h_G+G$$
and 
$$\lim_k |\tilde{\mathcal{H}}_k-\mathcal{H}|_{C^j(A)}=0.$$
Here
$$v_k = \pi^u_{G_k}(F_k-G_k) \ and \ v_\infty = \pi^u_{G}(F-G).$$

\begin{prop} \label{comp33} Let    $\mathcal{H}_k \in \mathcal{T}^2_1(G_k,\delta)$, $\delta \in (0,\delta_3)$,  be a sequence of $C^2$ functions, for some $G_k\in \Omega$, base point $F_k$ and satisfying  
\begin{equation}\label{upper2}   |D \mathcal{H}_k|_{(G_k,0), (G_k,1)} \leq \epsilon,  |D^2 \mathcal{H}_k|_{(G_k,0), (G_k,1)}\leq \Cr{q1}.\end{equation}
Moreover assume there exists $\Cl{limitado}$ such  that 
$$|w+\mathcal{H}_k(w)|_{G_k,1}\leq \Cr{limitado},$$
 for every $w \in \mathbb{B}^u_{G_k}(v_k,\delta)$,    with
$$v_k = \pi^u_{G_k}(F_k-G_k), $$ and $k\geq 0$. Then there exists a subsequence $\mathcal{H}_{k_i}$ that converges to a $C^{1+Lip}$ function $\mathcal{H} \in \mathcal{T}^1_0(G,\delta)$, for some $G \in \Omega$.
\end{prop}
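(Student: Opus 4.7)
The plan is to use an Arzel\`a--Ascoli argument in the weak $|\cdot|_0$ topology, converting the uniform $|\cdot|_1$-bound $\Cr{limitado}$ and the strong compactness of $i\colon \mathcal{B}_1 \hookrightarrow \mathcal{B}_0$ into pre-compactness of the relevant sequences.

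First I would extract limit marked points. Since $\Omega$ is compact in $\mathcal{B}_0$ and $|F_k - G_k|_0 \leq \delta_3$, a subsequence gives $G_k \to G \in \Omega$ and $F_k \to F$ in $\mathcal{B}_0$; continuity of $\mathcal{R}$ on $V$ together with $|\mathcal{R}^i F_k - \mathcal{R}^i G_k|_0 \leq \delta_3$ for all $i \geq 0$ passes to the limit, so $F \in W^s_{\delta_3}(G)$. By property (i) the projections $\pi^u_{G_k}, \pi^h_{G_k}$ converge to $\pi^u_G, \pi^h_G$, and since the tangent spaces of $\hat{\mathcal{H}}_k$ lie in a uniform unstable cone transverse to $E^h_G$, for $k$ large the projection $\pi^u_G$ restricts to a diffeomorphism from a neighborhood of the base point inside $\hat{\mathcal{H}}_k$ onto a neighborhood of $v_\infty := \pi^u_G(F-G)$ in $E^u_G$. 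Fix an open set $A$ compactly contained in $\mathbb{B}^u_G(v_\infty, \delta)$; for $k$ large this yields $\tilde{\mathcal{H}}_k\colon A \to E^h_G + G$ with $u + \tilde{\mathcal{H}}_k(u) \in \hat{\mathcal{H}}_k$, determined implicitly by $\pi^u_G(w_k(u) + \mathcal{H}_k(w_k(u)) - G) = u$ for a unique $w_k(u) \in E^u_{G_k}$.

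Now I would verify the hypotheses of Arzel\`a--Ascoli in $C^1(A, \mathcal{B}_0)$. The values $u + \tilde{\mathcal{H}}_k(u) \in \hat{\mathcal{H}}_k$ lie in a $|\cdot|_1$-bounded set by hypothesis, hence are pre-compact in $\mathcal{B}_0$; equicontinuity of $\tilde{\mathcal{H}}_k$ follows from $|D\mathcal{H}_k|_{(G_k,0),(G_k,1)} \leq \epsilon$ and the uniform closeness of $Dw_k$ to the identity. For the derivatives, the chain rule gives $D\tilde{\mathcal{H}}_k(u) = \pi^h_G\circ (I + D\mathcal{H}_k(w_k(u)))\circ Dw_k(u)$; evaluated on a fixed basis $e_1, \dots, e_n$ of $E^u_G$, property (v) turns the $|\cdot|_{G_k,0}$-bound on $Dw_k(u)\cdot e_i \in E^u_{G_k}$ into a $|\cdot|_{G_k,1}$-bound via the constant $\Cr{comp44}$, and then $|D\mathcal{H}_k|_{(\cdot,0),(\cdot,1)} \leq \epsilon$ together with boundedness of $\pi^h_G$ on $\mathcal{B}_1$ places each $D\tilde{\mathcal{H}}_k(u)\cdot e_i$ in a $|\cdot|_1$-bounded set, hence pre-compact in $\mathcal{B}_0$. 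The uniform bound $|D^2\mathcal{H}_k|_{(G_k,0),(G_k,1)} \leq \Cr{q1}$ furnishes a Lipschitz constant for $D\tilde{\mathcal{H}}_k$ in the $\mathcal{B}_0$ norm, independent of $k$ and $u$, giving the equicontinuity of the derivatives.

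A diagonal argument along an exhaustion of $\mathbb{B}^u_G(v_\infty, \delta)$ by such $A$'s then yields a single subsequence $\mathcal{H}_{k_i}$ and a $C^1$ limit $\mathcal{H}$; the uniform Lipschitz constant on the derivatives passes to the limit, making $\mathcal{H}$ a $C^{1+Lip}$ function. The cone bound $|D\mathcal{H}|_{(G,0),(G,0)} \leq \epsilon$ follows from the analogous bounds on $\tilde{\mathcal{H}}_k$ combined with the continuity of the adapted projections in (i), and with $F \in W^s_{\delta_3}(G)$ as base point we conclude $\mathcal{H} \in \mathcal{T}^1_0(G, \delta)$. The main obstacle I expect is precisely the derivative-level compactness: one must carefully exploit property (v), specific to the finite-dimensional unstable direction, to upgrade a $|\cdot|_0$-bound into a $|\cdot|_1$-bound, since without this upgrade the compact embedding $\mathcal{B}_1 \hookrightarrow \mathcal{B}_0$ cannot be invoked to supply the $C^1$ (as opposed to merely $C^0$) convergence that is needed to obtain a $C^{1+Lip}$ limit.
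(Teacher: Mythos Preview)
Your proposal is correct and follows essentially the same approach as the paper: reparametrize the graphs over $E^u_G$ via $\pi^u_G$, upgrade $|\cdot|_0$-bounds to $|\cdot|_1$-bounds on the finite-dimensional unstable direction through property~(v), invoke the compact embedding $\mathcal{B}_1\hookrightarrow\mathcal{B}_0$ for pre-compactness of values and derivatives, and run an Arzel\`a--Ascoli/diagonal argument with the uniform Lipschitz bound on derivatives coming from the $D^2\mathcal{H}_k$ estimate. The only slip is in your first paragraph, where the convergence $F_k\to F$ in $\mathcal{B}_0$ needs the $|\cdot|_1$-bound on $F_k=v_k+\mathcal{H}_k(v_k)$ together with the compact embedding (exactly as you correctly argue later for the values of $\tilde{\mathcal{H}}_k$), not merely $|F_k-G_k|_0\le\delta_3$.
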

\begin{proof}Since $\Omega$ is compact, we can assume that the sequence $G_k$ converges on $\mathcal{B}_0$  to some $G \in \Omega$.   We claim that for $k$ large enough the map 
$$\pi_k\colon \mathbb{B}^u_{G_k}(v_k,\delta)\rightarrow E^u_G$$
defined as
\begin{equation} \label{proj} \pi_k(w)= \pi^u_G ( w+\mathcal{H}_k(w)-G).\end{equation}
is a homeomorphism on its image. It is enough to show that this map is injective. Indeed given $\gamma >0$ then for $k$ large enough we have
$$|\pi^u_{G_k}-\pi^u_{G}|_{0}, \ |\pi^h_{G_k}-\pi^h_{G}|_0 < \frac{\gamma}{\Cr{comp}} .$$
Choose $\gamma$ such that $\gamma(1+\epsilon) < 1$. Note that 
\begin{eqnarray}\label{little} \pi^u_G ( w+\mathcal{H}_k(w) - G)&=& \pi^u_{G_k} ( w+\mathcal{H}_k(w))-\pi^u_G(G) + (\pi^u_{G}-\pi^u_{G_k}) ( w+\mathcal{H}_k(w)) \nonumber \\
&=&   w + \pi^u_G(G_k-G)+ (\pi^u_{G}-\pi^u_{G_k}) ( w+\mathcal{H}_k(w)),   \end{eqnarray}
so if 
$$ \pi^u_G ( w+\mathcal{H}_k(w)-G)= \pi^u_G ( w'+\mathcal{H}_k(w')-G)$$
we would have 
\begin{eqnarray} |w-w'|_{G_k,0} &=& |(\pi^u_{G}-\pi^u_{G_k})(w-w'+\mathcal{H}_k(w)- \mathcal{H}_k(w'))|_{G_k,0}  \nonumber \\
&\leq&  \gamma(1+\epsilon)  |w-w'|_{G_k,0},
\end{eqnarray}
which implies $w=w'$. So the map defined in  (\ref{proj}) is injective.  Since  $F_k$ is a bounded sequence   in $\mathcal{B}_1$, we can assume that $F_k$ converges to some $F \in \mathcal{B}_0$. So
$$v_\infty=\lim_k v_k  = \pi^u_G(F-G).$$
It is easy to see  that $F \in W^s_{\delta_3}(G)$. 
Define 
$$\pi_k\colon \mathbb{B}^u_{G_k}(v_k,\delta)\rightarrow E^u_G$$
as
$$\pi_k(w)= \pi^u_G ( w+\mathcal{H}_k(w)-G).$$
Then by (\ref{little}) 
\begin{equation} \label{quasi} (1-\tilde{\gamma}_k (1+\epsilon)) |w-w'|_{G_k,0} \leq |\pi_k(w)-\pi_k(w')|_{G,0}\leq (1+\tilde{\gamma}_k (1+\epsilon))  |w-w'|_{G_k,0},\end{equation}   
where
$$\lim_k \tilde{\gamma}_k=0.$$
Since  $\pi_k$ is a homeomorphism on its image, (\ref{quasi})  implies that for every $\delta' < \delta$ there exists $k_0$ such that if $k > k_0$ then 
$$\mathbb{B}^u_G(v_\infty,\delta')\subset  \pi_k(\mathbb{B}^u_{G_k}(v_k,\delta)),$$
so for every convex open set $A$ compactly contained in $\mathbb{B}^u_G(v_\infty,\delta)$ there is $k_0$ such that if $k > k_0$ then
$$A \subset  \pi_k(\mathbb{B}^u_{G_k}(v_k,\delta)).$$
Define
$$ \tilde{\mathcal{H}}_k\colon A \rightarrow E^h_G+G  $$
as
$$ \tilde{\mathcal{H}}_k(u)=     \pi^h_G(   (Id + \mathcal{H}_k)\circ  (\pi_k)^{-1}(u) -G).$$ 
\ \\
\noindent {\bf Claim.} {\it  There exists $\Cl{48}$ such that 
$$|D^b\pi^h_G\circ (Id + \mathcal{H}_k)\circ  (\pi_k)^{-1}(u)|_{(G,0), (G,1)} \leq \Cr{48}$$
for every $u \in A$ and $k\geq k_0$, $b \in \{0,1,2\}$, that is,
$$|D^b \tilde{\mathcal{H}}_k|_{(G,0),(G,1)} \leq \Cr{48},$$
for every $b \in \{0,1,2\}$.} \\

\noindent Indeed, note that 
$$|\pi^h_G\circ (Id + \mathcal{H}_k)\circ  (\pi_k)^{-1}(u)|_{G,1}\leq \Cr{c55} \Cr{limitado}.$$
Moreover 
$$D_w\pi_k\cdot v = \pi^u_G ( v+D_w\mathcal{H}_k\cdot v).$$
$$D^2_w\pi_k\cdot v^2 = \pi^u_G (D^2_w \mathcal{H}_k\cdot v^2).$$
and
$$D_w\pi_k^{-1}\cdot v = (D_{\pi_k^{-1}(w)}\pi_k)^{-1} \cdot v, $$
$$D_w^2\pi_k^{-1}\cdot v^2 = - D_w\pi_k^{-1} \cdot  \pi^u_G \cdot D^2_{\pi_k^{-1}(w)} \mathcal{H}_k \cdot (D_{w}\pi_k^{-1} \cdot v )^2 $$
If $k$ is large enough  then
$$|D_w\pi_k\cdot v|_{G,0} \geq  \Cr{comp}^{-2}(1-\epsilon)^2  |v|_{G_k,0},$$
so
$$|D_w\pi_k^{-1}|_{(G,0), (G_k,0)}\leq \Cr{comp}^2 (1-\epsilon)^{-2}.$$
and
$$|D_w^2\pi_k^{-1}|_{(G,0),(G_k,0)} \leq \Cr{comp}^8 (1-\epsilon)^{-6}  \Cr{q1}.$$
So using  (v), (vii) and (\ref{upper2}) 
\begin{eqnarray} &|D \pi^h_G\circ (Id + \mathcal{H}_k)\circ  \pi_k^{-1}(u)|_{(G,0), (G,1)} \nonumber \\
& \leq  |\pi^h_G|_{(G_k,1), (G,1)}  |Id + D_{\pi_k^{-1}(u)}\mathcal{H}_k|_{(G_k,0),(G_k,1)} | D\pi_k^{-1}(u)|_{(G,0), (G_k,0)} \nonumber \\
&\leq \Cr{c55}  (\Cr{comp44} + \epsilon ) \Cr{comp}^2 (1-\epsilon)^{-2}.
 \end{eqnarray} 
 and
 \begin{eqnarray} & &|D^2 \pi^h_G\circ (Id + \mathcal{H}_k)\circ  \pi_k^{-1}(u)|_{(G,0), (G,1)} \nonumber \\
&\leq& | \pi^h_G \cdot D^2_u  \pi_k^{-1}|_{(G,0), (G,1)} +  |\pi^h_G \cdot D^2_{\pi_k^{-1}(u)} \mathcal{H}_k \cdot (D_u  \pi_k^{-1})^2|_{(G,0), (G,1)} \nonumber \\
&+& |\pi^h_G \cdot D_{\pi_k^{-1}(u)} \mathcal{H}_k \cdot D_u^2  \pi_k^{-1}|_{(G,0), (G,1)}  \nonumber \\
&\leq&  \Cr{c55}  \Cr{comp44} \Cr{comp}^8 (1-\epsilon)^{-6}  \Cr{q1}  + \Cr{c55} \Cr{q1} \Cr{comp}^4 (1-\epsilon)^{-4} + \Cr{c55}   \epsilon \Cr{comp}^8 (1-\epsilon)^{-6}  \Cr{q1}.
 \end{eqnarray} 
This proves the claim.  In particular, since $A$ is convex,  the maps
\begin{equation} \label{ascoli} u\mapsto D^b \tilde{\mathcal{H}}_k(u) \ are \ uniformly \ Lipschitz \ on  \ A,\end{equation}
for $b \in \{0,1\}$, the set 
$$\{\tilde{\mathcal{H}}_k(u)\colon \ u \in A, \ k\geq k_1   \}$$
is a   relatively compact subset of $\mathcal{B}_0$, and 
$$\{D \tilde{\mathcal{H}}_k(u)\colon \ u \in A, \ k\geq k_1   \}$$
is a   relatively compact subset of the space of all continuous operators of $\mathcal{B}_0$.  Let $\{ u_i\}_{i\in \mathbb{N}}$ be a dense subset of $A$. Then using the Cantor's diagonal argument  one can find a subsequence $k_j$ such that the limits
$$\lim_j  \ D^b \tilde{\mathcal{H}}_k (u_i),$$
exists for every $i \in \mathbb{N}$.  Then (\ref{ascoli}) implies that 
$$\mathcal{H}^b(u)= \lim_j  \  D^b \tilde{\mathcal{H}}_k(u),$$
exists for every $u \in A$, $b\in \{0,1\}$. This convergence is uniform on $u\in A$. One can easily conclude that
$$D\mathcal{H}^0=\mathcal{H}^1$$
So  $\mathcal{H}=\mathcal{H}^0$ is $C^{1+Lip}$ and $\tilde{\mathcal{H}}_k$ converges on $A$ to $\mathcal{H}$ in $C^1$ topology.  Since there exists an exhaustion of $\mathbb{B}^u_G(v_\infty,\delta)$ by convex, open and relatively compact sets $A_k$, we can use Cantor's diagonal argument once again to find a $C^{1+Lip}$ map $\mathcal{H} \in \mathcal{T}^1_0(G,\delta)$ such that $\mathcal{H}_k$ converges to $\mathcal{H}$ in $C^1$ topology. 
\end{proof}
For every $\mathcal{H} \in \mathcal{T}^1_0(G,\delta)$ we can define a borelian measure $m_{\mathcal{H}}$ on $\hat{\mathcal{H}}$ in the following way.  The measure $m_{\mathcal{H}}$ is the $n$-dimensional {\it Hausdorff measure } on $\hat{\mathcal{H}}$ with respect to the metric induced in $\hat{\mathcal{H}}$ by the norm of $|\cdot |_{G,0}$.  

Fix some $\Cl{iso} >0 $. For each $G \in \Omega$, let $\gamma_G > 0$ be such that for every $\tilde{G} \in \mathbb{B}(G,\gamma_G)\cap \Omega $ the map 
$$\pi_{\tilde{G}}^u\colon E^u_G\mapsto E^u_{\tilde{G}}$$
is a linear isomorphism whose norm and the norm of its inverse is  bounded by some constant $\Cr{iso}$, considering the norm   $|\cdot|_{G,0}$ on $E^u_G$ and the norm $|\cdot|_{\tilde{G},0}$  on $E^u_{\tilde{G}}$. This is possible since $G\mapsto \pi^u_G$ is continuous with respect to  the $\mathcal{B}_0$ norm.  By the compactness of $\Omega$ there is a finite set $G_1^\star, \dots, G_m^\star \in \Omega$ such that
$$\Omega \subset \cup_i \mathbb{B}(G_i^\star,\gamma_{G_i}).$$
 Fix a basis $v^i_1,\dots v^i_n$ for $E^u_{G_i^\star}$.  
For every $$G\in \mathbb{B}(G_i^\star,\gamma_{G_i^\star})\cap\Omega $$ we have a basis $v^{G,i}_1,\dots, v^{G,i}_n$ for $E^u_{G}$ given by $v^{G,i}_j=\pi^u_{G}(v^{i}_j)$.  Let $| \cdot  |_{G,G^\star_i}$ be the norm on $E^u_G$  that turns $E^u_G$ into a Hilbert space and $\mathcal{S}_{G,i} = (v^{G,i}_1,\dots, v^{G,i}_n)$ into a orthonormal basis of it. 
There is $\Cl{linear} > 0$  such that  
$$\frac{1}{ \Cr{linear}} | v |_{G_i^\star,G_i^\star}\leq | v |_{G_i^\star,0}\leq \Cr{linear} | v |_{G_i^\star,G_i^\star}$$
for every $i$. This implies that  
\begin{equation}\label{compn2} \frac{1}{ \Cr{iso} \Cr{linear}} | v |_{G,G_i^\star}  \leq | v |_{G,0} \leq \Cr{iso} \Cr{linear} | v |_{G,G_i^\star}\end{equation}
Together with (\ref{comp78}), we conclude that the  norms $|\cdot|_{G,G_i^\star}$, $|\cdot|_{G,0}$ and $|\cdot|_0$ are not only equivalent on $E^u_G$ (which  is obvious, once $E^u_G$ has finite dimension), but  also that there is a universal upper bound to the norm of the identity maps $Id\colon E^u_G \mapsto E^u_G$ that holds considering  every  $G \in \Omega$ and every one of  these three norms on its domain and range.

Let $m_{G,i}$ be the Lebesgue measure  on $E^u_{G}$ such that 
\begin{equation}\label{box} m_{G,i}\{v \in E^u_{G}\colon \ v = \sum_j \alpha_j  v^{G,i}_j, \ with \  \alpha_j \in [0,1]\} =1.\end{equation}
Note that the quotient of the measure $m_{G,i}$ by the $n$-dimensional Hausdorff measure induced by the norm  $| \cdot |_{G,G_i^\star}$ is a constant that  depends only on the dimension $n$ of $E^u$.  Of course by the uniqueness of the Haar measure on locally compact topological groups,  if $m_{G}$ is the $n$-dimensional Hausdorff measure induced by the norm $|\cdot|_{G,0}$ on $E^u_{G}$ and  $G \in \mathbb{B}(G_i^\star,\gamma_{G_i^\star})$ then there exists  
$K_{G,i} > 0$ such that $m_{G,i}= K_{G,i} m_{G}$. From (\ref{compn2}) it easily follows that 
\begin{equation}\label{compk} \frac{\sigma(n)}{\Cr{iso}^{n} \Cr{linear}^{n}}  \leq K_{G,i}\leq  \sigma(n) \Cr{iso}^n \Cr{linear}^n.\end{equation}
where $\sigma(n)$ is a constant that depends only on $n$. Finally given some $\mathcal{H} \in \mathcal{T}_0^k(G,\delta)$ with base point $F$  then 
$$\Pi^u_{G}\colon \hat{\mathcal{H}}\mapsto \mathbb{B}^u_{G}(\pi^u_{G}(F-G),\delta)$$
given by $\Pi^u_{G}(y)= \pi^u_{G}(y-G)$ is a bilipchitz map and the Lipchitz constant of $\Pi^u_{G}$ and its inverse is at most $1+\varepsilon$, considering  the metric induced by $|\cdot|_{G,0}$ on $\hat{\mathcal{H}}$ and $\mathbb{B}^u_{G}(\pi^u_G(F-G),\delta)$. This implies that for every $A \subset  \hat{\mathcal{H}}$ we have
\begin{equation}\label{compkk} (1+\epsilon)^{-n} \leq \frac{m_{\mathcal{H}}(A)}{m_{G}(\Pi^u_{G}(A))} \leq (1+\epsilon)^n. \end{equation}

Note also  that if $m_{\mathcal{H},\mathcal{B}_0}$ is the $n$-dimensional Hausdorff measure on $\hat{\mathcal{H}}$  induced by the norm $|\cdot|_{0}$ then by (\ref{comp78}) we have that 
$$  \frac{1}{\Cr{comp}^n} \leq    \frac{m_{\mathcal{H}}(A)}{m_{\mathcal{H},\mathcal{B}_0}(A)}\leq \Cr{comp}^n.$$
for every borelian set $A \subset  \hat{\mathcal{H}}$.

\begin{lem} \label{lower}Given $\delta > 0$, there exists $\Cl{lowerballs}(\delta) > 0$ and  $\Cl{lowerballs2}(\delta) > 0$ with the following property.  Suppose that $\mathcal{H} \in \mathcal{T}^1_0(G, \tilde{\delta})$, with base point $\tilde{F}$.
\begin{itemize}
\item[A.] If $F \in \hat{\mathcal{H}}$  satisfies 
 $$\mathbb{B}^u_{G}(\pi^u_G(F-G) , \Cr{comp} \delta) \subset \mathbb{B}^u_{G}(\pi^u_G(\tilde{F}-G) , \tilde{\delta}).$$
then
$$m_{\mathcal{H}}\{ F_1 \in \hat{\mathcal{H}}\colon \ |F_1-F|_{0} \leq  \delta \} \leq \Cr{lowerballs}(\delta).$$
\item[B.] If $F \in \hat{\mathcal{H}}$  satisfies 
 $$\mathbb{B}^u_{G}(\pi^u_G(F-G) , \frac{\delta}{\Cr{comp}(1+\varepsilon)}) \subset \mathbb{B}^u_{G}(\pi^u_G(\tilde{F}-G) , \tilde{\delta}).$$
then
$$m_{\mathcal{H}}\{ F_1 \in \hat{\mathcal{H}}\colon \ |F_1-F|_{0} < \delta\} \geq \Cr{lowerballs2}(\delta).$$
\end{itemize}
\end{lem}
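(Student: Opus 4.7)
The plan is to transfer the estimate from the graph $\hat{\mathcal{H}}$ to the finite-dimensional disk $\mathbb{B}^u_G\subset E^u_G$ via the projection $\Pi^u_G(X)=\pi^u_G(X-G)$, and then apply the standard volume formula for balls in a finite-dimensional normed space. Uniformity of all constants in $G\in\Omega$ is already provided by the setup of this section, in particular by the comparisons (\ref{comp78}), (\ref{compn2}), (\ref{compk}), and (\ref{compkk}).

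For part (A), I would set $A=\{F_1\in\hat{\mathcal{H}}\colon |F_1-F|_0\leq\delta\}$. Because $|\cdot|_0$ and $|\cdot|_{G,0}$ are comparable with constant $\Cr{comp}$ by (\ref{comp78}) and $\pi^u_G$ has operator norm at most $1$ in the adapted norm, every $F_1\in A$ satisfies $|\Pi^u_G(F_1)-\Pi^u_G(F)|_{G,0}\leq|F_1-F|_{G,0}\leq\Cr{comp}\delta$. The hypothesis of (A) places $\mathbb{B}^u_G(\pi^u_G(F-G),\Cr{comp}\delta)$ inside the domain of $\mathcal{H}$, so $\Pi^u_G(A)$ sits inside this ball, and (\ref{compkk}) yields
$$m_{\mathcal{H}}(A)\leq (1+\epsilon)^n\, m_G\bigl(\mathbb{B}^u_G(\pi^u_G(F-G),\Cr{comp}\delta)\bigr).$$
Since $E^u_G$ has real dimension $n$ and $|\cdot|_{G,0}$ on it is uniformly equivalent to the Hilbertian norm $|\cdot|_{G,G_i^\star}$ via (\ref{compn2}), the right-hand side is bounded by a constant depending only on $\delta$, $n$, and the universal data $\Cr{comp},\Cr{iso},\Cr{linear},\epsilon$, which I would take as $\Cr{lowerballs}(\delta)$.

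For part (B), I would pick any radius $r_0<\delta/(\Cr{comp}(1+\epsilon))$. For $v\in\mathbb{B}^u_G(\pi^u_G(F-G),r_0)$ the Lipschitz bound coming from $|D\mathcal{H}|_{(G,0),(G,0)}\leq\epsilon$ gives $|v+\mathcal{H}(v)-F|_{G,0}\leq(1+\epsilon)r_0$, hence $|v+\mathcal{H}(v)-F|_0\leq\Cr{comp}(1+\epsilon)r_0<\delta$. Thus $v+\mathcal{H}(v)$ belongs to $B:=\{F_1\in\hat{\mathcal{H}}\colon|F_1-F|_0<\delta\}$, and $\Pi^u_G(B)\supset\mathbb{B}^u_G(\pi^u_G(F-G),r_0)$. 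By the lower half of (\ref{compkk}),
$$m_{\mathcal{H}}(B)\geq (1+\epsilon)^{-n}\, m_G\bigl(\mathbb{B}^u_G(\pi^u_G(F-G),r_0)\bigr),$$
which is bounded below by a positive constant $\Cr{lowerballs2}(\delta)$ depending only on $\delta$ and the universal data, again by the finite-dimensional volume estimate.

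No serious obstacle appears: the content of the lemma is really just the finite-dimensionality of $E^u_G$ together with the uniform bilipschitz identification of $\hat{\mathcal{H}}$ with its projection, and the only thing to watch is that every estimate be uniform in $G\in\Omega$, which is exactly what the preceding choice of reference points $G_1^\star,\dots,G_m^\star$ and adapted norms has been arranged to guarantee.
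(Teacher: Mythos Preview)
Your proposal is correct and follows essentially the same route as the paper: project the ball in $\hat{\mathcal{H}}$ to $E^u_G$ via $\Pi^u_G$ using (\ref{comp78}) and (\ref{compkk}), and then invoke the uniform finite-dimensional volume bounds coming from (\ref{compn2}) and (\ref{compk}). The paper merely writes out the explicit box inclusions and constants where you say ``finite-dimensional volume estimate,'' but the argument is the same.
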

\begin{proof}[Proof of A] Note that if $F_1 \in \hat{\mathcal{H}}$ satisfies 
$$|F_1-F|_{0} < \delta$$
then 
$$ \pi^u_G(F_1-G)  \in \mathbb{B}^u_{G}(\pi^u_G(F-G) , \Cr{comp} \delta ).$$
Denote $\tilde{\gamma}= \Cr{comp}\delta$ and $w_0=\pi^u_G(F-G)$. 
We have that $G\in \mathbb{B}(G_{i}^\star,\gamma_{G_{i}^\star})$, for some $i$. So 
$$\mathbb{B}^u_{G_k}(w_0 ,\tilde{\gamma}) \subset \{w_0+w \in E^u_{G_k}\colon \ w = \sum_j \alpha_j  v^{G,i_k}_j, \ with \  |\alpha_j|\leq \Cr{iso} \Cr{linear}  \tilde{\gamma}\} $$
so by (\ref{box}) and (\ref{compk}) we obtain 
\begin{eqnarray}
m_{\mathcal{H}}\{ F_1 \in \hat{\mathcal{H}}\colon \ |F_1-F|_{0} < \delta\} &\leq& m_{\mathcal{H}}\{v+\mathcal{H}(v)\colon \ v \in  \mathbb{B}^u_{G}(w_0 ,\tilde{\gamma}) \} \nonumber \\ &\leq& (1+\varepsilon)^n m_{G}(\mathbb{B}^u_{G}(w_0 ,\tilde{\gamma})) \nonumber \\
&\leq& \frac{\Cr{iso}^n \Cr{linear}^n (1+\varepsilon)^n}{ \sigma(n)} m_{G,i}(\mathbb{B}^u_{G}(w_0,\tilde{\gamma})) \nonumber \\
&\leq& \frac{ \Cr{iso}^n \Cr{linear}^n (1+\varepsilon)^n}{ \sigma(n) }   (2\Cr{iso} \Cr{linear} \tilde{\gamma})^n.  
\end{eqnarray}
\end{proof}
\begin{proof}[Proof of B] If 
$$v \in \mathbb{B}^u_{G}(\pi^u_G(F-G) , \frac{\delta}{2\Cr{comp}(1+\varepsilon)})$$
then $$v + \mathcal{H}(v) \in \{ F_1 \in \hat{\mathcal{H}}\colon \ |F_1-F|_{0} < \delta\}.$$
Denote $\tilde{\gamma}= \frac{\delta}{\Cr{comp}(1+\varepsilon)}$ and $w_0=\pi^u_G(F-G)$. 
We have that $G\in \mathbb{B}(G_{i}^\star,\gamma_{G_{i}^\star})$, for some $i$. So 
$$\{w_0+w \in E^u_{G_k}\colon \ w = \sum_j \alpha_j  v^{G,i_k}_j, \ with \  \alpha_j\in  [0,\frac{\tilde{\gamma}}{ \Cr{iso} \Cr{linear} \sqrt{n}}]\}  \subset  \mathbb{B}^u_{G_k}(w_0 ,\tilde{\gamma})$$
so by (\ref{box}) and (\ref{compk}) we obtain 
\begin{eqnarray}
m_{\mathcal{H}}\{ F_1 \in \hat{\mathcal{H}}\colon \ |F_1-F|_{0} < \delta \} &\geq& m_{\mathcal{H}}\{v+\mathcal{H}(v)\colon \ v \in  \mathbb{B}^u_{G}(w_0 ,\tilde{\gamma}) \} \nonumber \\ &\geq& \frac{1}{(1+\varepsilon)^n} m_{G}(\mathbb{B}^u_{G}(w_0 ,\tilde{\gamma})) \nonumber \\
&\geq& \frac{1}{ \sigma(n) \Cr{iso}^n \Cr{linear}^n (1+\varepsilon)^n} m_{G,i}(\mathbb{B}^u_{G}(w_0,\tilde{\gamma})) \nonumber \\
&\geq& \frac{1}{ \sigma(n) \Cr{iso}^n \Cr{linear}^n (1+\varepsilon)^n} \Big( \frac{\tilde{\gamma}}{ \Cr{iso} \Cr{linear} \sqrt{n}} \Big)^n.  
\end{eqnarray}
\end{proof}

\begin{prop}\label{measureh} Let $\Cr{q1} > 0$ as in Corollary \ref{cordon}. If  $\delta \in (0, \delta_3)$  and $\Cl[c]{ball} \in (0,1) $ then there exists $\Cl{medhip}= \Cr{medhip}(\delta, \Cr{ball}) > 0$   such that the following holds. Let $\mathcal{H} \in \mathcal{T}^2_1(G, \delta)$  be a 
$C^2$ function  
$$\mathcal{H}\colon \mathbb{B}^u_G(v_0,\delta) \rightarrow E^h_G+G$$
with $G\in \Omega$, base point $F$  and  satisfying  
\begin{equation}\label{upper}   |D \mathcal{H}|_{(G,1),(G,0)} \leq \epsilon,  |D^2 \mathcal{H}|_{(G,1),(G,0)}  \leq \Cr{q1}.\end{equation}
Moreover assume there exists $\Cl{limitado2}$ such  that 
$$|w+\mathcal{H}_k(w)|_{1}\leq \Cr{limitado2},$$
 for every $w \in \mathbb{B}^u_{G}(v_0,\delta)$. Then we have that
$$m_{\mathcal{H}}(\hat{\mathcal{H}}\cap W^s_{\delta_3}(\Omega)^c\cap \mathbb{B}(F,\Cr{ball}\delta )) \geq \Cr{medhip}.$$
\end{prop}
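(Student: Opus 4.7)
The plan is to argue by contradiction, using the compactness result of Proposition \ref{comp33} together with the Transversal Empty Interior Assumption. If no such $\Cr{medhip}$ exists, we obtain a sequence $\mathcal{H}_k \in \mathcal{T}^2_1(G_k,\delta)$ with base points $F_k$, all satisfying (\ref{upper}) and a common $|\cdot|_1$-bound, such that
$$m_{\mathcal{H}_k}\bigl(\hat{\mathcal{H}}_k \cap W^s_{\delta_3}(\Omega)^c \cap \mathbb{B}(F_k,\Cr{ball}\delta)\bigr) \xrightarrow[k\to\infty]{} 0.$$
Proposition \ref{comp33} then yields, along a subsequence, $G_k \to G \in \Omega$, $F_k \to F \in W^s_{\delta_3}(G)$ and $\mathcal{H}_k \to \mathcal{H} \in \mathcal{T}^1_0(G,\delta)$ with $\mathcal{H}$ of class $C^{1+\mathrm{Lip}}$, in the $C^1$-sense defined just before that proposition.

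The central claim is that the relatively open subset $B := \hat{\mathcal{H}} \cap W^s_{\delta_3}(\Omega)^c \cap \mathbb{B}(F,\Cr{ball}\delta)$ of $\hat{\mathcal{H}}$ has $m_{\mathcal{H}}$-measure zero. Given any open $A$ compactly contained in $\mathbb{B}^u_G(\pi^u_G(F-G),\delta)$ with $V := \{u+\mathcal{H}(u):u \in A\}\subset B$, let $V_k \subset \hat{\mathcal{H}}_k$ be the corresponding reparametrized piece produced by the map $\tilde{\mathcal{H}}_k$ of Proposition \ref{comp33}. The closure $\overline{V}$ is compact in $\mathcal{B}_0$ (because $\hat{\mathcal{H}} \subset \mathcal{B}_1$ is $|\cdot|_1$-bounded and the inclusion $\mathcal{B}_1 \hookrightarrow \mathcal{B}_0$ is compact); combining this with the openness of $W^s_{\delta_3}(\Omega)^c$ in $\mathcal{B}_0$, the convergence $F_k \to F$, and $\tilde{\mathcal{H}}_k \to \mathcal{H}|_A$ in $C^1$, for all $k$ large enough we have $V_k \subset \hat{\mathcal{H}}_k \cap W^s_{\delta_3}(\Omega)^c \cap \mathbb{B}(F_k,\Cr{ball}\delta)$. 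Applying the bilipschitz comparison (\ref{compkk}) to both $\mathcal{H}_k$ and $\mathcal{H}$, together with the near-isometry (\ref{quasi}) used to transfer measures from the fibre $E^u_{G_k}$ to $E^u_G$, yields a bound of the form
$$\liminf_k m_{\mathcal{H}_k}(V_k) \geq (1+\epsilon)^{-2n}\, m_{\mathcal{H}}(V),$$
which together with the hypothesis forces $m_{\mathcal{H}}(V)=0$. Exhausting $B$ by such pieces $V$ gives $m_{\mathcal{H}}(B)=0$.

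Now (\ref{compkk}) also shows that every nonempty relatively open subset of the $C^{1+\mathrm{Lip}}$ graph $\hat{\mathcal{H}}$ has strictly positive $m_{\mathcal{H}}$-measure (since $\Pi^u_G$ is bilipschitz onto an open subset of $E^u_G \cong \mathbb{R}^n$), so $B = \emptyset$; equivalently, $\hat{\mathcal{H}} \cap \mathbb{B}(F,\Cr{ball}\delta) \subset W^s_{\delta_3}(\Omega)$. The left-hand side is an open neighbourhood of $F$ in $\hat{\mathcal{H}}$, so $\hat{\mathcal{H}} \cap W^s_{\delta_3}(\Omega)$ has nonempty interior in $\hat{\mathcal{H}}$. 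Shrinking $\delta_3$ in advance (as allowed at the end of Section \ref{adapted}) so that the Transversal Empty Interior Assumption applies with its $\delta$ equal to $\delta_3$ and to every $\mathcal{T}^1_0(\cdot,\delta')$ with $\delta' \leq \delta_3$, this contradicts that assumption.

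I expect the main obstacle to be the middle step, because the graphs $\hat{\mathcal{H}}_k$ and $\hat{\mathcal{H}}$ live over the distinct unstable fibres $E^u_{G_k}$ and $E^u_G$; producing a definite lower bound for $m_{\mathcal{H}_k}(V_k)$ in terms of $m_{\mathcal{H}}(V)$ forces one to chain (\ref{compkk}), (\ref{quasi}) and the $C^1$-convergence of the reparametrized graphs, keeping track of how the approximately linear maps between $E^u_{G_k}$ and $E^u_G$ distort $n$-dimensional Hausdorff measure. Everything else is a direct consequence of Proposition \ref{comp33} and the standing assumptions.
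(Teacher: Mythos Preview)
Your proof is correct and follows the same overall contradiction-plus-compactness strategy as the paper, but you apply the Transversal Empty Interior Assumption at a different point. The paper invokes it \emph{first}: once the limit $\mathcal{H}\in\mathcal{T}^1_0(G,\delta)$ is in hand, TEI supplies a single parameter $u_\infty$ with $u_\infty+\mathcal{H}(u_\infty)\in W^s_{\delta_3}(\Omega)^c\cap\mathbb{B}(F,\Cr{ball}\delta/3)$; a fixed ball around that point is then pushed back to each $\hat{\mathcal{H}}_k$ and its measure is bounded below explicitly via (\ref{box}), (\ref{compk}), (\ref{compkk}), giving the contradiction directly. You instead prove a lower-semicontinuity statement for $m_{\mathcal{H}_k}$ versus $m_{\mathcal{H}}$ on arbitrary open pieces (chaining (\ref{compkk}) with the near-isometry (\ref{quasi})), deduce $m_{\mathcal{H}}(B)=0$, hence $B=\emptyset$, and only then contradict TEI. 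The paper's route is shorter because it needs the measure comparison only for one small ball rather than for an exhaustion, but your route isolates the semicontinuity cleanly and would be reusable elsewhere.

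Two minor corrections. First, $\mathbb{B}(F,\Cr{ball}\delta)$ is defined as a \emph{closed} ball, so $B$ is not relatively open in $\hat{\mathcal{H}}$ as you assert; work with the open ball instead (this does not affect the final contradiction, since an open neighbourhood of $F$ in $\hat{\mathcal{H}}$ still lands in $W^s_{\delta_3}(\Omega)$). Second, your justification that $\overline{V}$ is compact in $\mathcal{B}_0$ via a $|\cdot|_1$-bound on $\hat{\mathcal{H}}$ is unnecessary and not obviously available for the limit graph (Proposition~\ref{comp33} only places $\mathcal{H}$ in $\mathcal{T}^1_0$); compactness of $\overline{V}$ follows immediately because it is the continuous image of the compact set $\overline{A}\subset E^u_G\cong\mathbb{R}^n$.
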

\begin{proof}Suppose that there exists  a sequence of maps  $G_k \in \Omega$ and $C^2$ functions  $\mathcal{H}_k$ 
$$\mathcal{H}_k\colon \mathbb{B}^u_{G_k}(\pi^u_{G_k}(F_k-G_k),\delta) \rightarrow E^s_{G_k}+G_k$$
such that $\mathcal{H}_k \in \mathcal{T}^2_1(G_k,\delta)$, with base point $F_k$  and moreover
\begin{equation}\label{zerom} \lim_k m_{\mathcal{H}_k}(\hat{\mathcal{H}}\cap W^s_{\delta_3}(\Omega)^c\cap \mathbb{B}(F_k,\Cr{ball}\delta ))=0.\end{equation}
Since $\Omega$ is compact we can replace $G_k$ by a subsequence such  that 
$$\lim_k G_k= G \in \Omega.$$
Since $\mathcal{H}_k$ satisfies (\ref{upper}), by Proposition \ref{comp33}  we can assume without loss of generality that the functions $\mathcal{H}_k$ converges in $C^1$ topology  to a $C^{1+Lip}$ function $\mathcal{H} \in \mathcal{T}^1_0(G,\delta)$ with a base point $F$. Moreover $F_k$ converges to $F$ in $\mathcal{B}_0$. By the transversal empty interior assumption   the transversal family $u\mapsto u +\mathcal{H}(u)$ has a parameter  
$$u_\infty\in \mathbb{B}^u_{G}(\pi^u_{G}(F-G),\frac{1}{3}\frac{\Cr{ball} \delta}{\Cr{comp}(1+\epsilon)})$$
 such that $ u_\infty +\mathcal{H}(u_\infty) \in W^s_{\delta_3}(\Omega)^c\cap \mathbb{B}(F,\Cr{ball} \delta/3)$.  Since  $W^s_{\delta_3}(\Omega)$ is a closed set there is $\gamma> 0$ be  such that  $\mathbb{B}(u_\infty +\mathcal{H}(u_\infty),\gamma) \subset  W^s_{\delta_3}(\Omega)^c\cap \mathbb{B}(F,\Cr{ball} \delta/3)$.

 Since the families  $u\mapsto u +\mathcal{H}_k(u)$ converges to this family in $C^1$ topology we can easily conclude that for large $k$  there exists $$u_k \in \mathbb{B}^u_{G_k}(\pi^u_{G_k}(F_k-G_k),\frac{2}{3} \frac{\Cr{ball} \delta}{\Cr{comp}(1+\epsilon)} )$$  such that 
 $$| u_\infty +\mathcal{H}(u_\infty) - u_k -\mathcal{H}_k(u_k)|_{0} < \frac{\gamma}{2}.$$
 Let $$\tilde{\gamma}= \min \{ \frac{1}{3}\frac{\Cr{ball} \delta}{\Cr{comp}(1+\epsilon)}, \frac{\gamma}{2\Cr{comp}(1+\varepsilon)}\}.$$
If  $$v  \in \mathbb{B}^u_{G_k}(u_k,\tilde{\gamma}) \subset \mathbb{B}^u_{G_k}(\pi^u_{G_k}(F_k-G_k), \frac{\Cr{ball} \delta}{\Cr{comp}(1+\epsilon)} )$$ then $$v+\mathcal{H}_k(v) \in \mathbb{B}(u_\infty +\mathcal{H}(u_\infty),\gamma),$$ so $v+\mathcal{H}_k(v) \in  W^s_{\delta_3}(\Omega)^c\cap \mathbb{B}(F_k,\Cr{ball}\delta)$.  We have that $G_k\in \mathbb{B}(G_{i_k}^\star,\gamma_{G_{i_k}^\star})$, for some $i_k$. Note that (\ref{compn2}) implies 
$$\{u_k+v \in E^u_{G_k}\colon \ v = \sum_j \alpha_j  v^{G_k,i_k}_j, \ with \  \alpha_j \in [0,\frac{\tilde{\gamma}}{ \Cr{iso} \Cr{linear} \sqrt{n}}]\}  \subset  \mathbb{B}^u_{G_k}(u_k,\tilde{\gamma})$$
so by (\ref{box}) and (\ref{compk}) we obtain 
\begin{eqnarray}
m_{\mathcal{H}_k}( \hat{\mathcal{H}} \cap W^s_{\delta_3}(\Omega)^c \cap \mathbb{B}(F,\Cr{ball}\delta)) &\geq& m_{\mathcal{H}_k}\{v+\mathcal{H}_k(v)\colon \ v \in  \mathbb{B}^u_{G_k}(u_k,\tilde{\gamma}) \} \nonumber \\ &\geq& \frac{1}{(1+\varepsilon)^n} m_{G_k}(\mathbb{B}^u_{G_k}(u_k,\tilde{\gamma})) \nonumber \\
&\geq& \frac{1}{ \sigma(n) \Cr{iso}^n \Cr{linear}^n (1+\varepsilon)^n} m_{G,i_k}(\mathbb{B}^u_{G_k}(u_k,\tilde{\gamma})) \nonumber \\
&\geq& \frac{1}{ \sigma(n) \Cr{iso}^n \Cr{linear}^n (1+\varepsilon)^n} \Big( \frac{\tilde{\gamma}}{ \Cr{iso} \Cr{linear} \sqrt{n}} \Big)^n.  
\end{eqnarray}
So 
$$\limsup_k  m_{\mathcal{H}_k}( W^s_{\delta_3}(\Omega)^c\cap \mathbb{B}(F,\Cr{ball}\delta)) > 0,$$
which is a contradiction with (\ref{zerom}).
\end{proof} 

\section{Dynamical balls in transversal families.}
\label{dballs}

Given $F \in W^s_{\delta_3}(\Omega)$, let $G_F$ be an  element of $\Omega$ such that $F \in W^s_{\delta_3}(G_F)$. Choose $\delta_4 < \delta_3$  in such way that if $G_1, G_2 \in \Omega$ and $|G_1 - G_2|_{G_1}\leq 3 \Cr{comp} \delta_4$ then 
\begin{equation} \label{comp77}   \frac{1}{2} |v|_{G_2,0}    \leq |v|_{G_1,0}  \leq 2 |v|_{G_2,0} \end{equation} 

Fix $G \in \Omega$. In this section $\mathcal{H}$ is a $C^2$  function
$$\mathcal{H}\colon W \rightarrow E^h_G+G,$$
where $W \subset E^u_G$ is an convex open set. We can choose   $\delta_5  <  \delta_4$ such that  if 
\begin{itemize}
\item[A1.] $|D \mathcal{H}|_{G,0} \leq \Cr{3}  \epsilon$ and 
\item[A2.]  $\inf_{x \in W} |x+\mathcal{H}-G|_0 + (1+\epsilon) diam_G W < 2\Cr{comp}\delta_5$,
\end{itemize}
then for every 
$$F \in \hat{\mathcal{H}}\cap W^s_{\delta_5}(\Omega),$$
and for every  $\delta \in (0,  \delta_5/(3 \Cr{comp}))$  satisfying 
$$\mathbb{B}^u_G( \pi^u_G(F-G)  ,3 \Cr{comp} \delta) \subset W$$
we have that 
$$\{ w+\mathcal{H}(w)\colon \  w \in   W \}\cap \{ u+v+G_F\colon \ u \in \mathbb{B}^u_{G_F}(\pi^u_{G_F}(F-G_F),2 \Cr{comp} \delta), \ v \in E^s_{G_F}   \}$$
is the graph $\hat{\mathcal{H}}_F$ of a $C^2$ function 
$$\mathcal{H}_F \in \mathcal{T}(G_F, 2 \Cr{comp} \delta)$$
with base point $F$ and 
$$|D \mathcal{H}_F|_{G_F,0}\leq  \epsilon$$
and  additionally if we assume 
\begin{itemize}
\item[A3.]  $|D^2 \mathcal{H}|_{G,0} \leq  \Cr{q1}/2$.
\end{itemize}
then 
$$|D^2 \mathcal{H}|_{G_F,0}\leq \Cr{q1}.$$

The goal of this section is to show 
\begin{prop}\label{zerohip}  Let  $\mathcal{H}$ be a $C^2$ function  satisfying $A1$-$A3$. Then
\begin{equation}m_{\mathcal{H}}(W^s_{\delta_5}(\Omega)\cap\hat{\mathcal{H}} )=0.  \end{equation}
\end{prop}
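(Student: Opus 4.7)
The plan is to argue by contradiction. Set $S:=W^s_{\delta_5}(\Omega)\cap\hat{\mathcal{H}}$ and suppose $m_{\mathcal{H}}(S)>0$. Since $\hat{\mathcal{H}}$ is an $n$-dimensional $C^2$ graph over $W\subset E^u_G$ and $\pi^u_G$ restricted to $\hat{\mathcal{H}}$ is bilipschitz, the pushforward of $m_{\mathcal{H}}$ by $\pi^u_G$ is comparable to Lebesgue measure, so the Lebesgue density theorem yields a point $F\in S$ at which $S$ has density one. Fix $G_F\in\Omega$ with $F\in W^s_{\delta_5}(G_F)$ and set $F_k=\mathcal{R}^k F$, $G_k=\mathcal{R}^k G_F$. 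Using the construction at the beginning of Section~\ref{dballs}, localize $\mathcal{H}$ to a transversal family $\mathcal{H}_F\in\mathcal{T}^2_0(G_F,2\Cr{comp}\delta)$ based at $F$, with $|D\mathcal{H}_F|\leq\epsilon$ and $|D^2\mathcal{H}_F|\leq\Cr{q1}$. Iterate by Corollary~\ref{cordon} to obtain $(\mathcal{H}_F)_k:=\hat{\mathcal{R}}_{G_k}\cdots\hat{\mathcal{R}}_{G_1}\mathcal{H}_F\in\mathcal{T}^2_1(G_k,\eta_k)$ based at $F_k$; for $k$ large one has $\eta_k=\delta_3$, the bounds~(\ref{k1}),~(\ref{k2}) hold uniformly and $|w+(\mathcal{H}_F)_k(w)|_1\leq\Cr{uuu}$. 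Proposition~\ref{measureh} then furnishes, for some fixed $\Cr{ball}\in(0,1)$ and every sufficiently large $k$,
\[
m_{(\mathcal{H}_F)_k}\bigl(\widehat{(\mathcal{H}_F)_k}\cap W^s_{\delta_3}(\Omega)^c\cap\mathbb{B}(F_k,\Cr{ball}\delta_3)\bigr)\geq\Cr{medhip},
\]
while Lemma~\ref{lower}(A) bounds the enveloping measure $m_{(\mathcal{H}_F)_k}(\widehat{(\mathcal{H}_F)_k}\cap\mathbb{B}(F_k,\Cr{ball}\delta_3))$ above by a constant $M$ independent of $k$.

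Next I define $U_k\subset\widehat{\mathcal{H}_F}$ as the preimage of $\widehat{(\mathcal{H}_F)_k}\cap\mathbb{B}(F_k,\Cr{ball}\delta_3)$ under $\mathcal{R}^k$, using the local inverses from Proposition~\ref{trans}(E). Because $\delta_5<\delta_3$, every $X\in S\cap U_k$ has its full forward orbit within distance $\delta_5<\delta_3$ of some orbit in $\Omega$, hence $\mathcal{R}^k(S\cap U_k)\subset W^s_{\delta_3}(\Omega)$; equivalently,
\[
\mathcal{R}^k(U_k\setminus S)\supset \widehat{(\mathcal{H}_F)_k}\cap W^s_{\delta_3}(\Omega)^c\cap\mathbb{B}(F_k,\Cr{ball}\delta_3).
\]
The technical core is a bounded distortion estimate for the diffeomorphism $\mathcal{R}^k\colon U_k\to\mathcal{R}^k(U_k)$: combining the $C^2$ regularity of $\mathcal{R}$, the cone invariance~(\ref{incones2}), the uniform expansion rate $\Cr{22}$ along the unstable direction from Proposition~\ref{trans}, and the fact that forward orbits starting in $U_k$ remain in a fixed $|\cdot|_0$-neighborhood of $\Omega$, one telescopes $\log J(\mathcal{R}^k)$ along the orbit---the variation at step $j$ being $O(\Cr{22}^{j-k})$ and hence geometrically summable---to obtain a constant $K$ independent of $k$ with $\sup_{U_k}J(\mathcal{R}^k)/\inf_{U_k}J(\mathcal{R}^k)\leq K$, where $J$ denotes the Jacobian relative to the Hausdorff measures $m_{\mathcal{H}_F}$ and $m_{(\mathcal{H}_F)_k}$. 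The inclusion above and this distortion bound give, uniformly in $k$,
\[
\frac{m_{\mathcal{H}_F}(U_k\setminus S)}{m_{\mathcal{H}_F}(U_k)}\geq K^{-1}\,\frac{\Cr{medhip}}{M}=:\beta>0.
\]

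Finally I convert this to a density contradiction. A direct iteration of the $E^u_G$-direction contraction of the local inverses, together with $\mathcal{H}_F$ being $(1+\epsilon)$-Lipschitz, shows that $U_k\subset\widehat{\mathcal{H}_F}\cap\mathbb{B}(F,r_k)$ for radii $r_k=O(\Cr{22}^{-k})\to 0$. Applying Lemma~\ref{lower}(B) to $(\mathcal{H}_F)_k$ and the bounded distortion in reverse bounds $m_{\mathcal{H}_F}(U_k)$ below by a uniform fraction of $m_{\mathcal{H}_F}(\widehat{\mathcal{H}_F}\cap\mathbb{B}(F,r_k))$, the latter being controlled above by Lemma~\ref{lower}(A) at the same scale. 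Combining these bounds with the previous display, the density of $\widehat{\mathcal{H}_F}\setminus S$ at $F$ along the balls of radius $r_k$ is at least a positive constant, which contradicts the choice of $F$ as a density point of $S$. The hard part will be the bounded distortion estimate and the matching comparison of $m_{\mathcal{H}_F}(U_k)$ with the enveloping ball measure around $F$: both rely on exploiting the exponential unstable expansion $\Cr{22}$, the uniform $C^2$ control of $\mathcal{R}$ in a $\delta_1$-neighborhood of $\Omega$, and the second-derivative bound $\Cr{q1}$ propagated along the orbit by Corollary~\ref{cordon}.
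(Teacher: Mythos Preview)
Your overall strategy---produce a uniform lower bound for the escaping mass at level $k$ via Proposition~\ref{measureh}, pull it back through bounded distortion, and derive a contradiction at the bottom level---is the same as the paper's. The difference is in the last step. You localize at a single Lebesgue density point $F$ of $S$ and try to contradict density along round balls $\mathbb{B}(F,r_k)$. The paper instead shows $m_{\mathcal{H}}(K)=0$ for every compact $K\subset S$ by covering $K$ with dynamical balls $B_{\mathcal{H}}(F,\delta,k)$ centred at a maximal $(k,\delta)$-separated set $S_{k,\delta}\subset K$, summing the escaping mass over these balls (using that the half-scale balls are pairwise disjoint), and letting $k\to\infty$.

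Your density-point route has a genuine gap at the step where you assert that $m_{\mathcal{H}_F}(U_k)$ is at least a fixed fraction of $m_{\mathcal{H}_F}(\hat{\mathcal{H}}_F\cap\mathbb{B}(F,r_k))$. The bounded distortion estimate (your telescoped Jacobian, which is the content of the paper's Lemma~\ref{stepmeasure} and Proposition~\ref{comp00}) controls \emph{ratios of measures of subsets inside a single dynamical ball}; it does not compare a dynamical ball to a round ball of the same diameter. When $n\geq 2$ and the expansion rates along different directions of $E^u$ differ, the set $U_k=B_{\mathcal{H}}(F,\Cr{ball}\delta_3,k)$ becomes an increasingly eccentric ellipsoid: its diameter $r_k$ decays like the \emph{slowest} unstable rate, while its $m_{\mathcal{H}_F}$-measure decays like the product of all the rates, so $m_{\mathcal{H}_F}(U_k)/r_k^{\,n}\to 0$ in general and the inequality you need fails. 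Lemma~\ref{lower}(B) at level $k$ only gives $m_{(\mathcal{H}_F)_k}(\mathcal{R}^kU_k)\geq\text{const}$; pulling this back yields $m_{\mathcal{H}_F}(U_k)\sim J_k^{-1}$, and nothing in the argument ties $J_k^{-1}$ to $r_k^{\,n}$ uniformly in $k$.

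The paper's Bowen-style covering sidesteps this entirely: it never compares a dynamical ball to a round ball at the bottom level. The ratio estimates from Proposition~\ref{comp00} and Lemma~\ref{lower} are only ever used between dynamical balls (or round balls) at the \emph{same} level $k$, and the contradiction comes from the outer-regularity fact that $m_{\mathcal{H}}\bigl(\cup_{F\in S_{k,\delta}}B_{\mathcal{H}}(F,\delta,k)\bigr)\to m_{\mathcal{H}}(K)$ while a fixed fraction of that union lies in $K^c$. If you want to keep the density-point viewpoint, you would need a differentiation theorem with respect to the family of dynamical balls themselves (for instance via a Besicovitch-type covering property for them), not the round-ball Lebesgue density theorem you invoke.
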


\subsection{Dynamical balls}
Let $\mathcal{H}$ be as in Proposition \ref{zerohip}. Define the {\bf dynamical ball}
$$B_{\mathcal{H}} (F,\delta,k) =\{F_1 \in   \hat{\mathcal{H}}\colon   |\mathcal{R}^iF_1 - \mathcal{R}^iF|_{0} < \delta, \text { for every } 0\leq i \leq k \}.$$
We can choose $\delta < \delta_5$ small enough such that 
\begin{equation} \label{dynball} B_{\mathcal{H}} (F,\delta,0) \subset \hat{\mathcal{H}}_F,\end{equation}
Consider the map $\phi_u$ as in  the proof of Propostition \ref{trans}
$$x\in E^u_G \mapsto  \phi_u(x) = \pi^u_{\mathcal{R}G} (\mathcal{R}(x+\mathcal{H}(x) + G)-\mathcal{R}G) \in  E^u_{\mathcal{R} G}.$$

\begin{lem}\label{stepmeasure}  Let $\Cr{q1}$ be as in Corollary  \ref{cordon}.  There exists $\Cr{estlip34}$ such that  for every  $G \in \Omega$ and $\mathcal{H} \in \mathcal{T}^2_0(G,2\Cr{comp} \delta)$  with base point $F=v_0+\mathcal{H}(v_0) \in W^s_{\delta_5}(G)$  that satisfies  
$$|D^2 \mathcal{H}|_{G,0} \leq \Cr{q1}$$
then the following holds. Consider the map $\phi_u$ as in  the proof of Propostition \ref{trans} 
$$\phi_u\colon \mathbb{B}^u_G(v_0,2\Cr{comp} \delta)   \mapsto E^u_{\mathcal{R} G}$$
defined by 
$$\phi_u(x) = \pi^u_{\mathcal{R}G} (\mathcal{R}(x+\mathcal{H}(x))-\mathcal{R}G).$$
If $S_1$ and $S_2$ are borelian subsets of $E^u_G$, with 
$$S_1\subset S_2 \subset  \mathbb{B}^u_G(v_0,2\Cr{comp} \delta)$$
 then 
\begin{equation}\label{dm}
\frac{m_{G,i}(S_1)}{m_{G,i}(S_2)} e^{-2\Cr{estlip34} diam_{G,0}{S_2}}   \leq  \frac{m_{\mathcal{R}G,j}(\phi_u(S_1))}{m_{\mathcal{R}G,j}(\phi_u(S_2))} \leq    \frac{m_{G,i}(S_1)}{m_{G,i}(S_2)} e^{2\Cr{estlip34} diam_{G,0}{S_2}} .
\end{equation}
Here $G \in \mathbb{B}(G_i,\gamma_{G_i})$  and $\mathcal{R}G \in \mathbb{B}(G_j,\gamma_{G_j})$.
\end{lem}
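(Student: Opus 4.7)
The plan is to reduce (\ref{dm}) to a bounded-distortion estimate for the Jacobian of $\phi_u$ computed in the fixed bases $\mathcal{S}_{G,i}$ and $\mathcal{S}_{\mathcal{R}G,j}$. Since $\phi_u$ is a $C^2$ diffeomorphism onto its image (established in the proof of Proposition \ref{trans}), the usual change-of-variables formula for Lebesgue measures on finite-dimensional vector spaces gives
\begin{equation*}
m_{\mathcal{R}G,j}(\phi_u(S)) = \int_S J(x)\,dm_{G,i}(x)
\end{equation*}
for every borelian $S\subset \mathbb{B}^u_G(v_0,2\Cr{comp}\delta)$, where $J(x)$ denotes the absolute value of the determinant of $D_x\phi_u\colon E^u_G\to E^u_{\mathcal{R}G}$ written in the bases $\mathcal{S}_{G,i}$ and $\mathcal{S}_{\mathcal{R}G,j}$ that define $m_{G,i}$ and $m_{\mathcal{R}G,j}$. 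The lemma then reduces to producing a universal Lipschitz constant $\Cr{estlip34}$ for $x\mapsto \log J(x)$ with respect to $|\cdot|_{G,0}$: fixing any $x_0\in S_2$, sandwiching $J$ by $J(x_0)\,e^{\pm\Cr{estlip34}\operatorname{diam}_{G,0} S_2}$, integrating over $S_1\subset S_2$ and $S_2$, and taking the ratio immediately gives (\ref{dm}), with the factor $2$ in the exponent coming from combining the two exponential bounds.

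To produce this Lipschitz control of $\log J$, I need a uniform upper bound on $|D^2\phi_u|$ together with a uniform positive lower bound on $J$. The second-derivative bound is already in the proof of Proposition \ref{trans}: applying (\ref{df1}) with $\pi = \pi^u_{\mathcal{R}G}$ (which has $|\cdot|_1$-norm at most $1$ by property (ii)) and using the hypothesis $|D^2\mathcal{H}|_{G,0}\leq \Cr{q1}$ gives
\begin{equation*}
|D^2_x\phi_u|_{(G,0),(\mathcal{R}G,1)} \leq \Cr{y} + \lambda_G\,\Cr{q1},
\end{equation*}
which is uniformly bounded over $G\in\Omega$. Since $|\cdot|_{\mathcal{R}G,0}\leq|\cdot|_{\mathcal{R}G,1}$, the same bound holds with target norm $|\cdot|_{\mathcal{R}G,0}$, and transferring it to the fixed bases via (\ref{compn2}) gives a uniform Lipschitz constant for the matrix entries of $D_x\phi_u$ in bases $\mathcal{S}_{G,i}$, $\mathcal{S}_{\mathcal{R}G,j}$, hence for $J$ itself. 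The lower bound on $J$ follows from the expansion estimate $|D_x\phi_u\cdot v|_{\mathcal{R}G,0}\geq \Cr{22}|v|_{G,0}$ on $E^u_G$ established in the proof of Proposition \ref{trans}: via (\ref{compn2}) this translates into $J(x)\geq c\,\Cr{22}^n$ for a universal $c>0$. A similar transfer gives a uniform upper bound on $J$ from $|D_x\phi_u|_{(G,0),(\mathcal{R}G,1)}\leq|D\tilde{\mathcal{R}}|_\infty(1+\epsilon)$.

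Putting these pieces together, $J$ is Lipschitz on $\mathbb{B}^u_G(v_0,2\Cr{comp}\delta)$ and uniformly bounded above and below away from zero, so $\log J$ is Lipschitz with a universal constant $\Cr{estlip34}$, and the first paragraph concludes the proof. The main subtlety, which I expect to be more bookkeeping than genuine difficulty, is keeping straight which norm and which basis is in use at each step: the Lebesgue measure $m_{G,i}$ is defined by the fixed basis $\mathcal{S}_{G,i}$ obtained by transporting a reference basis of $E^u_{G^\star_i}$ via $\pi^u_G$, and not directly by the adapted norm $|\cdot|_{G,0}$ in which the dynamical estimates are phrased, so (\ref{compn2}) must be invoked carefully to ensure that every constant remains uniform as $G$ ranges over $\Omega$.
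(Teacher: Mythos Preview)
Your proposal is correct and follows essentially the same route as the paper: both reduce (\ref{dm}) to the change-of-variables formula $m_{\mathcal{R}G,j}(\phi_u(S))=\int_S J\phi_u\,dm_{G,i}$ and then establish a uniform Lipschitz bound on $\log J\phi_u$ by combining the derivative estimates for $\phi_u$ and $\phi_u^{-1}$ from the proof of Proposition~\ref{trans} with the norm comparison (\ref{compn2}). The paper's proof is slightly terser---it simply records that $|D\phi_u|,|D^2\phi_u|,|D\phi_u^{-1}|,|D^2\phi_u^{-1}|$ are all uniformly bounded in the Hilbert norms $|\cdot|_{G,G_i^\star}$, $|\cdot|_{\mathcal{R}G,G_j^\star}$ and asserts that (\ref{lnjac}) follows---whereas you spell out more explicitly which bound feeds into which part of the Lipschitz estimate for $\log J$, but the content is the same.
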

\begin{proof} Due (\ref{compn2}) and the estimates for  $\phi^u$ in the proof of Proposition \ref{trans}  there is  $\Cl{ed}$ such that 
\begin{equation} \label{estprod} \max \{ |D \phi_u|,  |D^2 \phi_u|,|D \phi^{-1}_u| , |D^2 \phi^{-1}_u|\}  \leq \Cr{ed}.\end{equation}
where here we are considering the norms $| \cdot  |_{G,G_i^\star}$ in $E^u_{G}$,  and $| \cdot  |_{\mathcal{R}G,G^\star_j}$ in $E^u_{\mathcal{R}G}$. If $M_x$ is the matrix representation of $D\phi_u(x)$ with respect to the basis $\mathcal{S}_{{G,i}}$ and $\mathcal{S}_{{\mathcal{R}G,j}}$, define
$$J\phi_u(x) = Det \ M_x.$$ It is easy to see that (\ref{estprod}) implies that there exists $\Cl{estlip34}$ satisfying 
\begin{equation}\label{lnjac} \ln \Big| \frac{J\phi^u(x)}{J\phi^u(y)} \Big|\leq \Cr{estlip34} | x-y|_{G}.\end{equation}
Since
$$m_{\mathcal{R}G,j}(\phi_u(S)) = \int_S  J\phi^u   \ dm_{G,i}, $$ 
we have that (\ref{dm}) follows easily from (\ref{lnjac}).
\end{proof}

\begin{prop} \label{comp00} If $\delta > 0$ is small enough there exists $\Cl{dm} > 0$ such that the following holds. We have
that 
$$\mathcal{R}^i\colon B_{\mathcal{H}} (F,\delta,i) \rightarrow B_{\hat{\mathcal{R}}^i (\mathcal{H}_F)}( \mathcal{R}^i F,\delta,0)$$
is a diffeomorphism. Furthermore  for every borelian set $A \subset B_{\mathcal{H}} (F,\delta,i) $ we have
\begin{equation}\label{comp99}
\frac{1}{\Cr{dm}} \frac{m_{\mathcal{H}_F}(A)}{m_{\mathcal{H}_F}(B_{\mathcal{H}} (F,\delta,i))}   \leq  \frac{m_{\mathcal{R}^i (\mathcal{H}_F)}(\mathcal{R}^i(A))}{m_{\mathcal{R}^i (\mathcal{H}_F)}(B_{\hat{\mathcal{R}}^i (\mathcal{H}_F)}( \mathcal{R}^i F,\delta,0))}   \leq    \Cr{dm}\frac{m_{\mathcal{H}_F}(A)}{m_{\mathcal{H}_F}(B_{\mathcal{H}} (F,\delta,i))}.
\end{equation}
\end{prop}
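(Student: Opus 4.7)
The diffeomorphism claim is essentially bookkeeping from earlier results. By construction $B_{\mathcal{H}}(F,\delta,0)\subset \hat{\mathcal{H}}_F$, so $\mathcal{R}(B_{\mathcal{H}}(F,\delta,1))\subset \hat{\mathcal{R}}_{G_F}\mathcal{H}_F$, and by induction each $\mathcal{R}^j(B_{\mathcal{H}}(F,\delta,j))$ lies in the iterated transversal family $\widehat{\hat{\mathcal{R}}^j\mathcal{H}_F}$. Proposition~\ref{trans}.E supplies a local inverse $\mathcal{R}^{-1}$ on each iterated transversal family, and composing these $i$ times gives a smooth inverse to $\mathcal{R}^i$ on the image. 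The identification of the image with $B_{\hat{\mathcal{R}}^i\mathcal{H}_F}(\mathcal{R}^iF,\delta,0)$ is immediate from the definition once we see that any point of the latter ball pulls back into $\hat{\mathcal{H}}_F$ with all intermediate iterates automatically staying within $\delta$ of those of $F$.

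For the distortion inequality, I plan to transfer everything to the unstable projections. Recall the parametrization $\Pi^u_{\mathcal{R}^jG_F}\colon \widehat{\hat{\mathcal{R}}^j\mathcal{H}_F}\to E^u_{\mathcal{R}^jG_F}$ is bilipschitz with constant $\leq 1+\varepsilon$ and by (\ref{compkk}) its push-forward of $m_{\hat{\mathcal{R}}^j\mathcal{H}_F}$ agrees with the Hausdorff measure $m_{\mathcal{R}^jG_F}$ up to the factor $(1+\epsilon)^{\pm n}$, which by (\ref{compk}) is comparable to the basis-Lebesgue measure $m_{\mathcal{R}^jG_F,i_j}$ with universal constants depending only on $n$. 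Under this identification the dynamics on the transversal family is exactly the map $\phi_u$ of Proposition~\ref{trans}. Let $S_j=\Pi^u_{\mathcal{R}^jG_F}(\mathcal{R}^j B_{\mathcal{H}}(F,\delta,i))$ and $A_j=\Pi^u_{\mathcal{R}^jG_F}(\mathcal{R}^j A)$. Applying Lemma~\ref{stepmeasure} iteratively for $j=0,1,\dots,i-1$ gives
\begin{equation*}
\frac{m_{\mathcal{R}^iG_F,i_i}(A_i)}{m_{\mathcal{R}^iG_F,i_i}(S_i)} = \frac{m_{G_F,i_0}(A_0)}{m_{G_F,i_0}(S_0)}\cdot \prod_{j=0}^{i-1} e^{\pm 2\Cr{estlip34}\,\mathrm{diam}_{\mathcal{R}^jG_F,0}(S_j)}.
\end{equation*}

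The key geometric input is that the backwards iterates contract uniformly. Since $S_i$ has diameter at most $2(1+\epsilon)\delta$ in the $|\cdot|_{\mathcal{R}^iG_F,0}$ norm, and (\ref{cinv2}) together with (\ref{compn2}) gives $\mathrm{diam}_{\mathcal{R}^jG_F,0}(S_j)\leq C\,\Cr{expa}^{i-j}\delta$ for a constant $C$ that absorbs the comparison of adapted and basis norms and of $|\cdot|_0$ at different base points (using (\ref{comp77}) to move between base points as the dynamical ball shifts), the sum $\sum_{j=0}^{i-1}\mathrm{diam}_{\mathcal{R}^jG_F,0}(S_j)$ is bounded by $C\delta/(1-\Cr{expa})$ uniformly in $i$. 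This bounds the accumulated distortion by $\exp(2\Cr{estlip34}C\delta/(1-\Cr{expa}))$, which I take as $\Cr{dm}$. Converting back from $m_{\cdot,i_j}$ to the Hausdorff measures $m_{\hat{\mathcal{R}}^j\mathcal{H}_F}$ costs only a further bounded factor, which I absorb into $\Cr{dm}$ after choosing $\delta$ small enough that this geometric sum is below a fixed threshold.

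The main technical nuisance I foresee is not the contraction argument itself, which is standard, but keeping the chain of measures straight: four measures (Hausdorff on the graph, Hausdorff on the unstable projection, the Haar/basis Lebesgue $m_{G,i}$, and the adapted norm) and two base points (the chart center $G_{i_j}^\star$ and the moving $\mathcal{R}^jG_F$) all intervene at each step, and Lemma~\ref{stepmeasure} is formulated specifically for $m_{G,i}$. One must verify that the base-point choices $G_{i_j}^\star$ with $\mathcal{R}^jG_F\in \mathbb{B}(G_{i_j}^\star,\gamma_{G_{i_j}^\star})$ can indeed be made so the comparability constants $\Cr{iso},\Cr{linear}$ apply uniformly, and this is where $\delta_5$ must be further shrunk so (\ref{comp77}) legitimately moves the norms between base points. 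Once this is cleanly done, telescoping gives (\ref{comp99}) with $\Cr{dm}$ depending only on $\delta$ and the universal constants.
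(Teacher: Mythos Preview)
Your approach is essentially the same as the paper's: project to the unstable directions, iterate Lemma~\ref{stepmeasure}, bound the accumulated distortion by the geometric series $\sum_j \Cr{expa}^{\,i-j}$ coming from (\ref{cinv}), and then convert the basis-Lebesgue measures back to Hausdorff measures on the graphs via (\ref{compk}) and (\ref{compkk}). The paper carries this out exactly as you outline, arriving at $\Cr{dm}=\Cr{comp}^{2n}e^{\Cr{po}}(1+\epsilon)^{4n}$.

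The one place where you are too brisk is the diffeomorphism claim. You handle surjectivity (pulling back a point of $B_{\hat{\mathcal{R}}^i\mathcal{H}_F}(\mathcal{R}^iF,\delta,0)$ through the contracting inverses of Proposition~\ref{trans}.E), but the reverse containment---that every $y\in B_{\mathcal{H}}(F,\delta,i)$ has $\mathcal{R}^iy$ lying in the graph $\widehat{\hat{\mathcal{R}}^i\mathcal{H}_F}$ rather than merely in the possibly larger set $\mathcal{R}^i(\hat{\mathcal{H}}_F)$---is not quite ``immediate from the definition.'' One must check that $\pi^u_{\mathcal{R}^iG_F}(\mathcal{R}^iy-\mathcal{R}^iG_F)$ stays inside the domain ball of radius $2\Cr{comp}\delta$ of $\mathcal{H}_i$; the paper does this with the estimate around (\ref{maiordelta}), using $|\mathcal{R}^iy-\mathcal{R}^iF|_0<\delta$ and the norm comparison (\ref{comp78}). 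It is indeed bookkeeping, but it is the step where the factor $2\Cr{comp}$ in the definition of $\mathcal{H}_F$ earns its keep.
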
 
\begin{proof}  First we will prove by induction on $i$ that 
\begin{equation}\label{dynball2} \mathcal{R}^i\colon B_{\mathcal{H}} (F,\delta,i) \rightarrow B_{\hat{\mathcal{R}}^{i} (\mathcal{H}_F)}( \mathcal{R}^{i} F,\delta,0)\end{equation}
is a diffeomorphism. Indeed, for $i=0$ we have by (\ref{dynball}) that 
 $$B_{\mathcal{H}} (F,\delta,k)= B_{\mathcal{H}_F} (F,\delta,k).$$
Now assume by induction that (\ref{dynball2}) holds for some  $i$. Denote $\mathcal{H}_i:= \hat{\mathcal{R}}^{i} (\mathcal{H}_F)$.
By Proposition \ref{trans} we have that 
$$\mathcal{R}\colon \hat{\mathcal{H}_i}\mapsto \mathcal{R}(\hat{\mathcal{H}_i})$$
is invertible and its inverse $\mathcal{I}$ satisfies 
$$|\mathcal{I}(F_1) - \mathcal{I}(F_2)|_{0}\leq \Cr{expa} |F_1 - F_2|_{0}.$$
so if $$z \in  B_{\hat{\mathcal{R}}^{i+1} (\mathcal{H}_F)}( \mathcal{R}^{i+1} F,\delta,0)$$ we have $z \in  \hat{\mathcal{H}}_{i+1} \subset \mathcal{R}(\hat{\mathcal{H}_i})$ and consequently 
$$|\mathcal{I}(z) - \mathcal{R}^{i}(F)|_{0}\leq \Cr{expa} |z - \mathcal{R}^{i+1}(F)|_{0}\leq  \Cr{expa}  \delta < \delta,$$
so
$$\mathcal{I}( B_{\hat{\mathcal{R}}^{i+1} (\mathcal{H}_F)}( \mathcal{R}^{i+1} F,\delta,0)) \subset B_{\hat{\mathcal{R}}^{i} (\mathcal{H}_F)}( \mathcal{R}^{i} F,\delta,0).$$
By the induction assumption there exists an open set $\mathbb{W} \subset B_{\mathcal{H}} (F,\delta,i)$ such that
$$\mathcal{R}^i (\mathbb{W}) = \mathcal{I}( B_{\hat{\mathcal{R}}^{i+1} (\mathcal{H}_F)}( \mathcal{R}^{i+1} F,\delta,0)).$$
We conclude that 
$$\mathcal{R}^{i+1}\colon  \mathbb{W} \mapsto  B_{\hat{\mathcal{R}}^{i+1} (\mathcal{H}_F)}( \mathcal{R}^{i+1} F,\delta,0)$$
is a diffeomorphism. Of course $\mathbb{W}\subset  B_{\mathcal{H}} (F,\delta,i+1).$  
We claim that  $$\mathbb{W}= B_{\mathcal{H}} (F,\delta,i+1).$$ Notice that 
$$\mathcal{H}_i:= \hat{\mathcal{R}}^{i} (\mathcal{H}_F) \in \mathcal{T}(\mathcal{R}^iG_F, 2\Cr{comp}\delta).$$
By  the proof of Propostition \ref{trans} there exists an open set $W \subset E^u_{\mathcal{R}^{i+1}G_F}$ such that $\mathcal{R}(\hat{ \mathcal{H}_i})$ is the graph of a function
$$\mathcal{H}_{i+1}\colon W \mapsto E^s_{\mathcal{R}^{i+1}G_F} + G_F$$
such that
$$|\mathcal{H}_{i+1}(x)-\mathcal{H}_{i+1}(y)|_{{\mathcal{R}^{i+1}G_F},0} \leq \varepsilon |x-y|_{{\mathcal{R}^{i+1}G_F},0}$$
and
$$\mathbb{B}^u_{\mathcal{R}^{i+1}G_F}(\pi^u(\mathcal{R}^{i+1}F),2\theta_1\Cr{comp}\delta)\subset W.$$
In particular
$$\hat{\mathcal{R}}^{i+1} (\mathcal{H}_F) \in \mathcal{T}(\mathcal{R}^{i+1}G_F, \theta_1  2\Cr{comp}\delta).$$
Note that if 
$$x \in W\setminus  \mathbb{B}^u_{\mathcal{R}^{i+1}G_F}(\pi^u(\mathcal{R}^{i+1}F),2\Cr{comp}\delta)$$
then 
\begin{eqnarray}
& &\ \   |x +  \mathcal{H}_{i+1}(x) - \mathcal{R}^{i+1}F|_{\mathcal{B}_0} \nonumber  \\ &\geq&  \frac{1}{\Cr{comp}}  |x +  \mathcal{H}_{i+1}(x) - \mathcal{R}^{i+1}F|_{\mathcal{R}^{i+1}G_F,0} \nonumber \\
 &\geq& \frac{1}{\Cr{comp}}  |\pi^u_{\mathcal{R}^{i+1}G_F}(x +  \mathcal{H}_{i+1}(x) - \mathcal{R}^{i+1}F)|_{\mathcal{R}^{i+1}G_F,0}\nonumber \\
 &\geq& \frac{1}{\Cr{comp}}  |\pi^u_{\mathcal{R}^{i+1}G_F}(x) - \pi^u_{\mathcal{R}^{i+1}G_F}(\mathcal{R}^{i+1}F)|_{\mathcal{R}^{i+1}G_F,0}\nonumber  \\
 \label{maiordelta} &\geq& \theta_1  2 \delta  > \delta. 
 \end{eqnarray}
So if $y \in B_{\mathcal{H}} (F,\delta,i+1) \subset B_{\mathcal{H}} (F,\delta,i)$ then by induction assumption we have 
$$\mathcal{R}^{i}(y) \in B_{\hat{ \mathcal{H}_i}}( \mathcal{R}^{i} F,\delta,0)$$
and
$$\mathcal{R}^{i+1}(y) \in \mathcal{R}(\hat{ \mathcal{H}_i})$$
and by  (\ref{maiordelta})
 $$ \pi^u_{\mathcal{R}^{i+1}G_F}(\mathcal{R}^{i+1}(y)) \in   \mathbb{B}^u_{\mathcal{R}^{i+1}G_F}(\pi^u(\mathcal{R}^{i+1}F),2\Cr{comp}\delta)$$
 and of course 
 $$\mathcal{R}^{i+1}(y) \in B_{\hat{\mathcal{R}}^{i+1} (\mathcal{H}_F)}( \mathcal{R}^{i+1} F,\delta,0).$$

There is $\tilde{y}\in \mathbb{W}$ such that 
$$\mathcal{R}^{i}(\tilde{y}) \in B_{\hat{ \mathcal{H}_i}}( \mathcal{R}^{i} F,\delta,0)$$
and
$$\mathcal{R}^{i+1}(\tilde{y})=\mathcal{R}^{i+1}(\tilde{y}).$$
Since $\mathcal{R}$ is injective on $\hat{\mathcal{H}}_i$ we conclude that $\mathcal{R}^{i}(\tilde{y})=\mathcal{R}^{i}(y)$. Since by induction assumption $\mathcal{R}^i$ is injective on $B_{\mathcal{H}} (F,\delta,i)$ we conclude that $y=\tilde{y}$. So $\mathbb{W}=B_{\mathcal{H}} (F,\delta,i+1)$ and we conclude that proof that the map in (\ref{dynball2}) is a diffeomorphism. 

It remains to prove  the inequality in the statement of Proposition \ref{comp00}.  Define, for every $i\leq k$
$$S_1^i = \pi^u_{\mathcal{R}^iG_F}(\mathcal{R}^i(A)- \mathcal{R}^i(G_F) ),$$
$$S_2^i = \pi^u_{\mathcal{R}^iG_F}(\mathcal{R}^i(B_{\mathcal{H}}(F,\delta,k))-\mathcal{R}^i(G_F)),$$
Note that 
$$S_1^i \subset S_2^i \subset  \mathbb{B}^u_{\mathcal{R}^{i}G_F}(\pi^u(\mathcal{R}^{i}F),2\Cr{comp}\delta).$$
If 
$$\phi^i_u(x) = \pi^u_{\mathcal{R}^{i+1}G_F}(\mathcal{R}(x+\mathcal{H}_i(x)) - \mathcal{R}^{i+1}G_F)$$
then $\phi^i_u(S_a^i)=S_a^{i+1}$, $a=1, 2$. 
Choosing $j_i$ such that $\mathcal{R}^iG \in \mathbb{B}(G_{j_i},\gamma_{G_{j_i}})$, Lemma \ref{stepmeasure} implies 
\begin{equation}\label{dm3}
\frac{m_{G_F,j_0}(S_1^0)}{m_{G_F,j_0}(S_2^0)} e^{-2\Cr{estlip34}  \sum_{i\leq k} \delta_i}   \leq  \frac{m_{\mathcal{R}^kG_F,j_k}(S_1^k)}{m_{\mathcal{R}^kG_F,j_k}(S_2^k)} \leq    \frac{m_{G_F,j_0}(S_1^0)}{m_{G_F,j_0}(S_2^0)} e^{2\Cr{estlip34} \sum_{i\leq k} \delta_i} .
\end{equation}
with $\delta_i= diam \ S_2^i$.
By (\ref{cinv}) we have 
\begin{equation}\label{dm4}  \delta_i= diam_{\mathcal{R}^iG_F,0} \ S_2^i  \leq 2\Cr{comp} \Cr{expa}^{k-i}\delta\end{equation}
for every $i\leq k$. 
Let $\Cl{po}= 2\Cr{estlip34}  \sum_{k=0}^\infty 2\Cr{comp} \Cr{expa}^{k}$. Then  by (\ref{compk})
\begin{equation}\label{dm5}
\frac{m_{G_F}(S_1^0)}{m_{G_F}(S_2^0)} e^{-\Cr{po}}   \leq  \frac{m_{\mathcal{R}^kG_F}(S_1^k)}{m_{\mathcal{R}^kG_F}(S_2^k)} \leq    \frac{m_{G_F}(S_1^0)}{m_{G_F}(S_2^0)} e^{\Cr{po}} .
\end{equation}
So by (\ref{compkk})
\begin{equation}\label{dm6}
\frac{1}{\Cl{comp55}} \frac{m_{\mathcal{H}_F}(A)}{m_{\mathcal{H}_F}(B_{\mathcal{H}} (F,\delta,k))}  \leq  \frac{m_{\mathcal{H}_k}(\mathcal{R}^k(A))}{m_{\mathcal{H}_k}(B_{\hat{\mathcal{R}}^k (\mathcal{H}_F)}( \mathcal{R}^k F,\delta,0))} \leq  \Cr{comp55}  \frac{m_{\mathcal{H}_F}(A)}{m_{\mathcal{H}_F}(B_{\mathcal{H}} (F,\delta,k))} 
\end{equation}
where $\Cr{comp55}= e^{\Cr{po}} (1+\epsilon)^{4n}$.  Now we need to deal with the fact that $m_{\mathcal{H}}$ may not coincide with  $m_{\mathcal{H}_F}$, since $m_{\mathcal{H}}$ is the $n$-dimensional Hausdorff measure   induced by the norm $|\cdot|_{G,0}$ and $m_{\mathcal{H}_F}$  is is the $n$-dimensional Hausdorff measure induced by the norm $|\cdot|_{G_F,0}$. But by (\ref{comp78}) we have
\begin{equation} \label{dm7} \frac{1}{\Cr{comp}^{2n}}  \leq \frac{m_{\mathcal{H}}(A)}{m_{\mathcal{H}_F}(A)} \leq \Cr{comp}^{2n} .\end{equation} 
From (\ref{dm6}) and (\ref{dm7}) we can easily obtain the inequality in the statement of Proposition \ref{comp00}  with $\Cr{dm}= \Cr{comp}^{2n} \Cr{comp55}$. 
\end{proof}

\begin{proof}[Proof of Proposition \ref{zerohip}]
 It is enough to show that  for every compact subset $K \subset W^s_{\delta_5}(\Omega)\cap\hat{\mathcal{H}}$ we have  $m_{\mathcal{H}}(K)=0.$
Choose $\delta > 0$ small enough such that $\mathcal{H}_F$, as defined in Section \ref{dballs}, is well-defined for every $F \in K$.
Following the notation of Bowen \cite{bowen}, we say that a subset $S \subset K$ is $(k,\delta)$-separated is for every $F_1, F_2 \in S$,  with $F_1\neq F_2$, there exists $i\leq k$ such that 
$$|\mathcal{R}^iF_1 -\mathcal{R}^iF_2|_{0}  > \delta.$$
Fixed $k$ and $\delta$, the family $\mathcal{F}_{k,\delta}$ of $(k,\delta)$-separated subsets of $K\neq \emptyset$ is a non empty family, ordered by the inclusion relation. One can easily see that we can apply Zorn's Lemma to this family to show the it has a maximal element $S_{k,\delta}$. The maximality of $S_{k,\delta}$ implies that
\begin{equation} \label{maxi} K \subset \cup_{F \in S_{k,\delta}} B_{\mathcal{H}}(F,\delta,k).\end{equation}
Since $diam \ B_{\mathcal{H}}(F,\delta,k) \leq 2\Cr{comp} \Cr{expa}^{k}\delta $ we get that 
\begin{equation} \label{limk} \lim_k m_{\mathcal{H}}(\cup_{F \in S_{k,\delta}} B_{\mathcal{H}}(F,\delta,k)) =m_{\mathcal{H}}(K).\end{equation} 
In particular 
\begin{equation} \label{limk1}  \lim_k m_{\mathcal{H}}(K^c\cap \cup_{F \in S_{k,\delta}} B_{\mathcal{H}}(F,\delta,k))=0.\end{equation}  
Note that for every $F \in S_{k,\delta}$ we have 
$$\hat{\mathcal{R}}^k \mathcal{H}_F \in  \mathcal{T}^2_1(\mathcal{R}^kG_F, \frac{\delta}{2 }) \subset \mathcal{T}^2_1(\mathcal{R}^kG_F, \frac{\delta}{2 \Cr{comp}(1+\varepsilon)}) $$
Applying Proposition \ref{measureh} to the family 
$$\hat{\mathcal{H}}_k= \{v+(\hat{\mathcal{R}}^k \mathcal{H}_F)(v)\colon \ v \in \mathbb{B}^u_{\mathcal{R}^kG_F}(\pi^u_{\mathcal{R}^kG_F}(\mathcal{R}^kF-\mathcal{R}^kG_F), \frac{\delta}{2 \Cr{comp}(1+\varepsilon)})\}$$
we conclude that there exists $\Cr{medhip}> 0$, that does not depend on $k$ and $F \in  S_{k,\delta}$ such that 
$$m_{\hat{\mathcal{R}}^k \mathcal{H}_F}(W^s_{\delta_5}(\Omega)^c\cap  \hat{\mathcal{H}}_k \cap \mathbb{B}(\mathcal{R}^kF,\frac{\delta}{2})) > \Cr{medhip}.$$
Note that 
$$W^s_{\delta_5}(\Omega)^c\cap  \hat{\mathcal{H}}_k \cap \mathbb{B}(\mathcal{R}^kF,\frac{\delta}{2}) \subset B_{\hat{\mathcal{R}}^k \mathcal{H}_F}(\mathcal{R}^kF, \frac{\delta}{2},0).$$
So by Lemma \ref{lower} we have 
$$\frac{m_{\hat{\mathcal{R}}^k \mathcal{H}_F}(W^s_{\delta_5}(\Omega)^c\cap B_{\hat{\mathcal{R}}^k \mathcal{H}_F}(\mathcal{R}^kF, \frac{\delta}{2},0) )}{m_{\hat{\mathcal{R}}^k \mathcal{H}_F}(B_{\hat{\mathcal{R}}^k \mathcal{H}_F}(\mathcal{R}^kF, \frac{\delta}{2},0) )} \geq \frac{\Cr{medhip}}{\Cr{lowerballs}(\delta/2)}$$
By Proposition \ref{comp00} there exists a set $A_{F} \subset B_{\mathcal{H}} (F,\delta/2,k)$ such that 
$$\mathcal{R}^k(A_F)= W^s_{\delta_5}(\Omega)^c \cap B_{\hat{\mathcal{R}}^k \mathcal{H}_F}(\mathcal{R}^kF, \frac{\delta}{2},0).$$
Since $W^s_{\delta_5}(\Omega)$ is forward invariant, we have that $W^s_{\delta_5}(\Omega)^c$  is backward invariant and consequently $A_F \subset W^s_{\delta_5}(\Omega)^c\subset K^c$. Moreover
$$\frac{m_{\mathcal{H}}(A_F)}{m_{\mathcal{H}}(B_{ \mathcal{H}}(F, \frac{\delta}{2},k) )} \geq     \frac{\Cr{medhip}}{\Cr{dm} \Cr{lowerballs}(\delta/2)}.$$
Lemma \ref{lower} also implies 
$$\frac{m_{\hat{\mathcal{R}}^k \mathcal{H}_F}(B_{\hat{\mathcal{R}}^k \mathcal{H}_F}(\mathcal{R}^kF, \frac{\delta}{2},0) )}{m_{\hat{\mathcal{R}}^k \mathcal{H}_F}(B_{\hat{\mathcal{R}}^k \mathcal{H}_F}(\mathcal{R}^kF, \delta,0) )} \geq \frac{\Cr{lowerballs2}(\delta/2)}{\Cr{lowerballs}(\delta)},$$ 
so applying Proposition \ref{comp00} again we get 
$$\frac{m_{\mathcal{H}}(B_{ \mathcal{H}}(F, \delta/2,k) )}{m_{\mathcal{H}}(B_{ \mathcal{H}}(F,\delta,k) )} \geq     \frac{\Cr{lowerballs2}(\delta/2)}{\Cr{dm} \Cr{lowerballs}(\delta)} .$$
Because $S_{k,\delta}\in \mathcal{F}_{k,\delta}$ we have that 
$$\{ B_{\mathcal{H}}(F,\delta/2,k)\}_{F \in S_{k,\delta}} $$
is a family of pairwise disjoint dynamical balls. So 

\begin{eqnarray} \label{limk3} m_{\mathcal{H}}(K^c\cap \cup_{F \in S_{k,\delta}} B_{\mathcal{H}}(F,\delta,k))&\geq& m_{\mathcal{H}}( \cup_{F \in S_{k,\delta}} B_{\mathcal{H}}(F,\delta,k)\cap K^c) \nonumber \\
&\geq& m_{\mathcal{H}}(\cup_{F \in S_{k,\delta}}  B_{\mathcal{H}}(F,\delta/2,k)\cap K^c) \nonumber \\
&\geq& \sum_{F \in S_{k,\delta}} m_{\mathcal{H}}( B_{\mathcal{H}}(F,\delta/2,k)\cap K^c) \nonumber \\
&\geq& \sum_{F \in S_{k,\delta}} m_{\mathcal{H}}( A_F) \nonumber \\
&\geq&  \frac{\Cr{medhip}}{\Cr{dm} \Cr{lowerballs}(\delta/2)} \sum_{F \in S_{k,\delta}} m_{\mathcal{H}}( B_{\mathcal{H}}(F,\delta/2,k)) \nonumber \\
&\geq&  \frac{\Cr{medhip} \Cr{lowerballs2}(\delta/2)}{\Cr{dm}^2 \Cr{lowerballs}(\delta/2) \Cr{lowerballs}(\delta)}    \sum_{F \in S_{k,\delta}} m_{\mathcal{H}}( B_{\mathcal{H}}(F,\delta,k)) \nonumber \\ 
&\geq&  \frac{\Cr{medhip} \Cr{lowerballs2}(\delta/2)}{\Cr{dm}^2 \Cr{lowerballs}(\delta/2) \Cr{lowerballs}(\delta)}  m_{\mathcal{H}}( \cup_{F \in S_{k,\delta}} B_{\mathcal{H}}(F,\delta,k))    \nonumber \\ 
&\geq&  \frac{\Cr{medhip} \Cr{lowerballs2}(\delta/2)}{\Cr{dm}^2 \Cr{lowerballs}(\delta/2) \Cr{lowerballs}(\delta)}  m_{\mathcal{H}}(K).
\end{eqnarray} 
It follows easily from (\ref{maxi}),  (\ref{limk1}) and (\ref{limk3}) that  $m_{\mathcal{H}}(K)=0$.
\end{proof}

\begin{cor} \label{a1a2} Let  $\mathcal{H}$ be  a  $C^2$  function satisfying $A1$-$A2$, for some $G \in \Omega$. Then 
$$m_{\mathcal{H}}(W^s_{\delta_5}(\Omega)\cap\hat{\mathcal{H}} )=0.$$
\end{cor}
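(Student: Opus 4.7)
The plan is to reduce Corollary \ref{a1a2} to Proposition \ref{zerohip} by a \emph{forward-iteration regularization}: iterating $\hat{\mathcal{R}}$ a finite number of times replaces a family with arbitrary $|D^2|$ by one with $|D^2|\leq \Cr{q1}/2$ (via Corollary \ref{cordon}), after which Proposition \ref{zerohip} applies, and the conclusion is then pulled back via the bilipschitz inverse given by Proposition \ref{trans}(E).

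More concretely, fix $F\in K:=W^s_{\delta_5}(\Omega)\cap\hat{\mathcal{H}}$. Since $\mathcal{H}$ is $C^2$, continuity of $D^2\mathcal{H}$ yields a convex open neighborhood $U_F\subset W$ of $\pi^u_G(F-G)$ and a finite constant $M_F$ with $|D^2\mathcal{H}|_{G,0}\le M_F$ on $U_F$. After shrinking $U_F$ if needed and re-centering at $G_F\in\Omega$ (with $F\in W^s_{\delta_3}(G_F)$), the restriction $\tilde{\mathcal{H}}_F:=\mathcal{H}|_{U_F}$ belongs to $\mathcal{T}^2_0(G_F,\delta_F)$, inherits $|D\tilde{\mathcal{H}}_F|\le\Cr{3}\epsilon$ from A1, and satisfies $|D^2\tilde{\mathcal{H}}_F|_{G_F,0}\le M_F$. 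Applying Corollary \ref{cordon} with $\Cr{q3}=M_F$ produces an integer $k_F$ such that $\mathcal{G}_0:=\hat{\mathcal{R}}^{k_F}\tilde{\mathcal{H}}_F\in\mathcal{T}^2_1(\mathcal{R}^{k_F}G_F,\eta_{k_F})$ has $|D\mathcal{G}_0|\le\Cr{3}\epsilon$ and $|D^2\mathcal{G}_0|\le\Cr{q1}/2$, with base point $\mathcal{R}^{k_F}F\in W^s_{\delta_3}(\mathcal{R}^{k_F}G_F)$. Restricting $\mathcal{G}_0$ to a sub-ball $\mathbb{B}^u_{\mathcal{R}^{k_F}G_F}(\pi^u_{\mathcal{R}^{k_F}G_F}(\mathcal{R}^{k_F}F-\mathcal{R}^{k_F}G_F),\delta')$ with $\delta'$ small enough that A2 holds relative to $G=\mathcal{R}^{k_F}G_F$, we obtain a family $\mathcal{G}$ satisfying A1--A3, so Proposition \ref{zerohip} gives
$$m_{\mathcal{G}}\bigl(W^s_{\delta_5}(\Omega)\cap\hat{\mathcal{G}}\bigr)=0.$$

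Now pull back: iterating Proposition \ref{trans}(E) exhibits $\mathcal{R}^{k_F}$ as a bilipschitz homeomorphism from $N_F:=\mathcal{R}^{-k_F}(\hat{\mathcal{G}})\cap\hat{\tilde{\mathcal{H}}}_F$ onto $\hat{\mathcal{G}}$, with Lipschitz constant for the inverse bounded by $\Cr{expa}^{k_F}$ in $|\cdot|_0$. Forward invariance of $W^s_{\delta_5}(\Omega)$ gives $\mathcal{R}^{k_F}(N_F\cap W^s_{\delta_5}(\Omega))\subset\hat{\mathcal{G}}\cap W^s_{\delta_5}(\Omega)$, so the bilipschitz property and the uniform equivalence of the norms $|\cdot|_{*,0}$ and $|\cdot|_0$ (inequality (\ref{comp78})) imply $m_{\mathcal{H}}(K\cap N_F)=0$. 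Since $N_F$ is an open neighborhood of $F$ in $\hat{\mathcal{H}}$ and $\hat{\mathcal{H}}$ is separable, Lindel\"of's theorem provides a countable subcover $\{N_{F_j}\}$ of $K$, yielding $m_\mathcal{H}(K)\le\sum_j m_\mathcal{H}(K\cap N_{F_j})=0$.

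The main obstacle is the bookkeeping needed to verify that after the forward iteration and subsequent local restriction the family $\mathcal{G}$ indeed sits in $\mathcal{T}^2_1(\mathcal{R}^{k_F}G_F,\delta')$ with A1--A3 holding relative to $G=\mathcal{R}^{k_F}G_F\in\Omega$: the derivative bounds come from Corollary \ref{cordon}, the ``closeness-to-$G$'' portion of A2 follows because $\mathcal{R}^{k_F}F\in W^s_{\delta_3}(\mathcal{R}^{k_F}G_F)$, and the diameter portion of A2 is achieved by taking $\delta'$ small. A secondary technical point is to check that the bilipschitz constant $\Cr{expa}^{k_F}$, though $F$-dependent, only enters through a finite multiplicative factor in each piece $N_F$, which is harmless since zero measure is preserved by bilipschitz maps.
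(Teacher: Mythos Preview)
Your proof is correct and rests on the same core idea as the paper's: iterate the transversal family forward using Corollary~\ref{cordon} until the second-derivative bound $A3$ is achieved, then invoke Proposition~\ref{zerohip}.

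The packaging differs slightly. The paper argues by contradiction via Lebesgue density: if $m_{\mathcal{H}}(W^s_{\delta_5}(\Omega)\cap\hat{\mathcal{H}})>0$, pick a density point $F$, re-center at $G_F$, and observe that positive measure near $F$ persists under each forward iteration (since $\mathcal{R}$ is a diffeomorphism between transversal pieces by Proposition~\ref{trans}(E)); after enough iterations $\mathcal{H}_i$ satisfies $A1$--$A3$ and Proposition~\ref{zerohip} gives the contradiction. You instead run the implication in the other direction: for each $F\in K$ push forward, apply Proposition~\ref{zerohip} to get zero measure at the top, pull back via the bilipschitz inverse to get zero measure in a neighborhood $N_F$ of $F$, and then cover $K$ by countably many such $N_F$ using Lindel\"of. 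The density-point argument is marginally shorter because it sidesteps the explicit covering and the bookkeeping of $F$-dependent constants $k_F$ and Lipschitz factors $\Cr{expa}^{k_F}$; your direct approach has the advantage of being more constructive and making the role of the bilipschitz property in Proposition~\ref{trans}(E) completely explicit. Either way the substance is the same.
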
 
\begin{proof} Suppose that $ m_{\mathcal{H}}(W^s_{\delta_5}(\Omega)\cap\hat{\mathcal{H}} )> 0$. Then by the 
 Lebesgue's  density Theorem  there exists a point $F \in W^s_{\delta_5}(\Omega)\cap\hat{\mathcal{H}}$ such that for every open subset $A$ of $\hat{\mathcal{H}}$ such that $ F \in A$ we have 
$$ m_{\mathcal{H}}(W^s_{\delta_5}(\Omega)\cap A )> 0.$$
In particular
$$ m_{\mathcal{H}_F}(W^s_{\delta_5}(\Omega)\cap A )> 0$$
for every open subset $A$ of $\hat{\mathcal{H}}_F$ such that $ F \in A$. Let 
$$\mathcal{H}_i = \hat{\mathcal{R}}^i_{G_F} \mathcal{H}_F.$$
Consequently 
$$ m_{\mathcal{H}_i }(W^s_{\delta_5}(\Omega)\cap A )> 0$$
for every open subset $O$ of $\mathcal{H}_i $ such that $ \mathcal{R}^iF \in O$. By Corollary \ref{cor11}  if $i$ is large enough then  $\mathcal{H}_i$ satisfies $A1$-$A3$. That contradicts  Proposition \ref{zerohip}.
\end{proof}

\section{Proof of the main results.}

\begin{prop} \label{mdense} Let 
$$\mathcal{M}\colon P\rightarrow \mathcal{B}_0$$
be a $C^2$ map, where $P$ is an open subset of a Banach space $\mathcal{B}_2$ and moreover suppose that $D_x\mathcal{M}$ has dense image for every $x \in \mathcal{M}^{-1}W^s_{\delta_5}(\Omega)$. If $\mathcal{R}$ and $\Omega$ satisfy the assumptions of Theorem \ref{main}  then $\mathcal{M}^{-1}W^s_{\delta_5}(\Omega)$ is a $\Gamma^k(\mathcal{B}_2)$-null set, for every $k \in \mathbb{N}\cup \{\infty,\omega_\mathbb{R}\}$.\end{prop}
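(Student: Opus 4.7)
The plan is to verify assumption (H) of Proposition \ref{generic} for the set $\Theta := \mathcal{M}^{-1}W^s_{\delta_5}(\Omega)$ and then invoke the corollary immediately following Proposition \ref{generic}. Because $W^s_{\delta_5}(\Omega)$ is closed in $V$ and $\mathcal{M}^{-1}(V)$ is open in $\mathcal{B}_2$, one first writes $\Theta$ as a countable union $\Theta=\cup_n\Theta_n$ with each $\Theta_n$ closed in $\mathcal{B}_2$ (by exhausting $V$ by $\mathcal{B}_0$-closed pieces and $\mathcal{M}^{-1}(V)$ by $\mathcal{B}_2$-closed pieces). Since countable unions of $\Gamma^k$-null sets are $\Gamma^k$-null, it suffices to verify (H) for each $\Theta_n$. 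For $x\notin\Theta_n$ this is automatic: take $\delta(x):=dist(x,\Theta_n)>0$ and the constant family $\alpha\equiv x$, so $\alpha_z(t)\notin\Theta_n$ for $|z|<\delta(x)$ and $m_{\alpha_z}(\Theta_n)=0$. All the work is confined to $x\in\Theta_n\subset\Theta$.

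For such $x$, set $F_0=\mathcal{M}(x)\in W^s_{\delta_5}(\Omega)$, let $G=G_{F_0}\in\Omega$ with $F_0\in W^s_{\delta_5}(G)$, and pick a basis $w_1,\dots,w_n$ of the finite-dimensional space $E^u_G$. Since $D_x\mathcal{M}$ has dense image in $\mathcal{B}_0$, for any $\eta>0$ one can choose $v_1,\dots,v_n\in\mathcal{B}_2$ with $|D_x\mathcal{M}\cdot v_i-w_i|_0<\eta$. Because $\pi^h_G w_i=0$, we get $|\pi^h_G D_x\mathcal{M}\cdot v_i|_{G,0}=O(\eta)$, while $\pi^u_G D_x\mathcal{M}\cdot v_i$ stays close to $w_i$ and still forms a basis of $E^u_G$. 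Define
\[ \alpha(t_1,\dots,t_n) = x + r\sum_{i=1}^{n} t_i v_i \;\in\; \Gamma^k_n(\mathcal{B}_2), \]
with $r>0$ a small scale, and set $\beta_z(t):=\mathcal{M}(\alpha(t)+z)$. For $r$ and $\delta(x)$ small enough, the projection $\phi_z(t):=\pi^u_G(\beta_z(t)-G)$ is a $C^2$ diffeomorphism onto its image (its derivative sends $e_i$ uniformly close to $rw_i$), so $\beta_z([-1,1]^n)$ is the graph of a $C^2$ function $\mathcal{H}^z\colon W^z\to E^h_G+G$ over a convex open $W^z\subset E^u_G$.

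The chain rule yields $D\mathcal{H}^z=(\pi^h_G D\beta_z)(\pi^u_G D\beta_z)^{-1}$, so the factor $r$ cancels and $\|D\mathcal{H}^z\|$ is $O(\eta)$; hence the derivative condition A1 of Section \ref{dballs} holds once $\eta$ has been chosen small. The diameter condition A2 then follows by further shrinking $r$ and $\delta(x)$, using $|F_0-G|_0\leq\delta_5$ and $diam_G W^z=O(r)$. Corollary \ref{a1a2} now delivers $m_{\mathcal{H}^z}(W^s_{\delta_5}(\Omega)\cap\hat{\mathcal{H}}^z)=0$. Because $\beta_z$ is a bi-Lipschitz $C^2$ parametrization of $\hat{\mathcal{H}}^z$ by $[-1,1]^n$, pulling this null statement back to the parameter space gives $m_{\alpha_z}(\Theta)=0$, and in particular $m_{\alpha_z}(\Theta_n)=0$. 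This is (H) for $\Theta_n$; applying the corollary following Proposition \ref{generic} to each piece and summing finishes the proof.

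The chief obstacle is the simultaneous calibration of $\eta$ and $r$. The dense-image hypothesis only supplies approximate lifts of the basis $\{w_i\}$, so $\eta$ must first be fixed small enough to push the $O(\eta)$ bound on $\|D\mathcal{H}^z\|$ below the threshold required by A1; only afterwards can $r$ and $\delta(x)$ be shrunk to guarantee simultaneously that $\alpha(t)+z\in P$, that $\phi_z$ remains a diffeomorphism, that A2 holds, and that $\beta_z$ is bi-Lipschitz on $[-1,1]^n$. All these compatibilities rest on the continuity of $D\mathcal{M}$, continuity of the distributions $E^u_G,E^h_G$ (equivalently the projections $\pi^u_G,\pi^h_G$) on $\Omega$, and compactness of $\Omega$, which together render the above estimates uniform in the base point.
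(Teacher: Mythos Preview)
Your argument is correct and follows essentially the same route as the paper: verify hypothesis (H) of Proposition~\ref{generic} by taking a constant family at points outside $\Theta$, and at points $x\in\Theta$ use the dense image of $D_x\mathcal{M}$ to pick $v_1,\dots,v_n$ whose images lie in the unstable cone, build the affine family $\alpha$, show that $\mathcal{M}\circ\alpha_z$ is (via the inverse function theorem on the $\pi^u_G$-projection) the graph of a $C^2$ function satisfying A1--A2, and invoke Corollary~\ref{a1a2}. The only notable difference is that you explicitly decompose $\Theta$ as an $F_\sigma$ before applying the corollary after Proposition~\ref{generic}, a point the paper leaves implicit; this extra care is appropriate since $W^s_{\delta_5}(\Omega)$ is only closed in $V$, not in $\mathcal{B}_0$.
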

\begin{proof}
 It is enough to prove that $\mathcal{M}^{-1}W^s_{\delta_5}(\Omega)$ satisfies assumption $(H)$ in Proposition \ref{generic} for every $k \in \mathbb{N}\cup \{\infty,\omega_\mathbb{R}\}$.   There are two cases. \\ \\
\noindent {\it Case I.} If $x \not\in \mathcal{M}^{-1}W^s_{\delta_5}(\Omega)$. Let $\alpha \colon T \rightarrow \mathcal{B}_2$ be the constant family $\alpha(\lambda)=x$, for every $\lambda \in T$.  Since $\mathcal{M}^{-1}W^s_{\delta_5}(\Omega)$ is a closed set,  if $ z\in \mathcal{B}_2$ is small enough we have $m_{\alpha_z}(\mathcal{M}^{-1}W^s_{\delta_5}(\Omega))=0$.\\ 

\noindent {\it Case II.} If $x  \in \mathcal{M}^{-1}W^s_{\delta_5}(\Omega)$.  Then $\mathcal{M} x \in W^s_{\delta_5}(G)$, for some $G \in \Omega$. Since the image of $D_x\mathcal{M}$ is dense, there exists $v_1,  \dots, v_n \in \mathcal{B}_2$ such that 
$$\{\pi^u_{G}\cdot D_x\mathcal{M}\cdot  v_j\}_{j\leq n}$$
is a basis of $E^u_G$ and moreover 
$$ \{ \sum_j \lambda_j D_x\mathcal{M}\cdot  v_j, \ \lambda_j \in \mathbb{R} \} \subset C^u_{\frac{\Cr{3}}{2}  \epsilon, 0}(G).$$ 
Consider the function
$$Q\colon U_1  \times U_2 \rightarrow \mathcal{B}_0$$
where $U_1 \subset \mathcal{B}_2$ is a small neighborhood of $0$  and $U_2\subset \mathbb{R}^n$ is a small  neighborhood of $0$, defined by 
$$Q(z,\lambda_1,\dots,\lambda_n)=  \mathcal{M}(x+z+ \sum_j \lambda_j v_j).$$
Define the $C^2$ map
$$\tilde{Q}\colon U_1  \times U_2  \rightarrow  U_1  \times E^u_G$$ 
as 
$$\tilde{Q}(z,\lambda_1,\dots,\lambda_n)= (z, \pi^u_G\circ Q(z,\lambda_1,\dots,\lambda_n)  - \pi^u_G(G))$$
Note that $D\tilde{Q}_{(0,0,\dots,0)}$ is a continuous linear isomorphism. By the Inverse Function Theorem there is a neighborhood $O=O_1 \times O_2 \subset \mathcal{B}_0\times E^u_G$ of 
$$\tilde{Q}(0,0,\dots,0)= (0,\pi^u_G\circ \mathcal{M}(x)-\pi^u_G(G))$$
and a $C^2$ diffeomorphism  on its image 
$S\colon O \rightarrow S(O),$
where $S(O)$ is a neighborhood of $(0,\dots,0)$ such that $\tilde{Q}\circ S  =Id$. In particular for each $z \in O_1$ the map
$$t \in O_2 \mapsto S(z,t)$$
is a $C^2$ diffeomorphism whose inverse is 
\begin{equation} \label{inversa} (\lambda_1,\dots,\lambda_n) \mapsto H_z(\lambda_1,\dots,\lambda_n) =\pi^u_G\circ Q(z,\lambda_1,\dots,\lambda_n)  - \pi^u_G(G).\end{equation}
For each $z \in O_1$ define
the $C^2$ function
$$\beta_z\colon O_2  \rightarrow E^h_G+G$$
as
$$\beta_z(t)= \pi^h_G\circ Q\circ S(z,t) - \pi^h_G(G)+G.$$
Of course $t + \beta_z(t)= Q\circ S(z,t)$. Reducing $O_2$ to a very small convex  open neighboohood of $\pi^u_G(\mathcal{R}^i(x))- \pi^u_G(G)$ in $E^u_G$ and $O_1$ to  a very small neighborhood of $0$ in $\mathcal{B}_2$  we have that  $\beta_z$  satisfies $A1$-$A2$ for every $z \in O_1$, so by Corollary \ref{a1a2} we conclude that 
\begin{equation}\label{df}  m_{\beta_z}(W^s_{\delta_5}(\Omega))=0.\end{equation}
Choose $\gamma >0$ and $\delta > 0$ such that 
$$\{ z \in \mathcal{B}_2\colon \ |z| < \delta\}\times [-\gamma,\gamma]^n \subset S(O_1\times O_2),$$
and define  
$$\alpha \colon [-1,1]^n  \mapsto \mathcal{B}_2$$
as
\begin{equation} \label{defalpha} \alpha(\lambda_1,\dots,\lambda_n)= x + \sum_i \gamma \lambda_i v_i\end{equation}
Notice that if $|z| < \delta$ 
\begin{eqnarray} \mathcal{M}(\alpha_z(\lambda_1,\dots,\lambda_n))&=& \mathcal{M}( x + z+ \sum_i \gamma \lambda_i v_i) \nonumber \\  &=& Q( z, \gamma \lambda_1, \dots, \gamma \lambda_n)   \nonumber \\ 
&=& \pi^u_G \circ Q( z, \gamma \lambda_1, \dots, \gamma \lambda_n) -  \pi^u_G(G) \nonumber \\
&+&  \pi^h_G \circ Q( z, \gamma \lambda_1, \dots, \gamma \lambda_n) -  \pi^h_G(G) + G \nonumber \\ 
 &=& t+ \beta_z(t),  \end{eqnarray} 
where $t = \pi^u_G\circ Q(z, \gamma \lambda_1,\dots,\gamma \lambda_n)-\pi^u_G(G)=H_z(\gamma \lambda_1,\dots,\gamma \lambda_n)$. Since $t \mapsto S(z,t)$ is a  diffeomorphism whose inverse is $H_z$, by (\ref{df}) we have that 
$$ m_{\alpha_z}(\mathcal{M}^{-1}W^s_{\delta_5}(\Omega))=0.$$
Note that $\alpha \in  \Gamma^k(\mathcal{B})$, for every $k \in \mathbb{N}\cup \{\infty\}$. To finish the proof in the case  $k=\omega_{\mathbb{R}}$   everything we need to do is to  extend $\alpha$ to a  complex affine function 
$$\alpha\colon \mathbb{C}^j \rightarrow \mathcal{B}_\mathbb{C}$$
using (\ref{defalpha}) with $\lambda_i \in \mathbb{C}$. 
\end{proof}

\begin{prop} \label{mdense2}  Let 
$$\mathcal{M}\colon P\rightarrow \mathcal{B}_0$$
be a complex analytic map, where $P$ is an open subset of a complex  Banach space $\mathcal{B}_2$ and moreover suppose that $D_x\mathcal{M}$ has dense image for every $x \in \mathcal{M}^{-1}W^s_{\delta_5}(\Omega)$.  If  $\mathcal{R}$ is a  complex analytic map satisfying the assumptions of Theorem \ref{main2}  then $\mathcal{M}^{-1}W^s_{\delta_5}(\Omega)$ is   $\Gamma^{\omega}(\mathcal{B}_2)$-null set and, considering  $\mathcal{B}_2$ as a real Banach space, all conclusions of Proposition \ref{mdense} holds. 
\end{prop}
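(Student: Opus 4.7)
The plan is to run the argument of Proposition \ref{mdense} in parallel, now in the complex analytic category: it suffices to verify hypothesis $(H)$ of Proposition \ref{generic} for $\Theta := \mathcal{M}^{-1}W^s_{\delta_5}(\Omega)$, first for $k=\omega$ on the complex Banach space $\mathcal{B}_2$, and then for every $k\in\mathbb{N}\cup\{\infty,\omega_\mathbb{R}\}$ when $\mathcal{B}_2$ is regarded as real. The chain Proposition \ref{comp33}--Proposition \ref{measureh}--Proposition \ref{zerohip}--Corollary \ref{a1a2} carries over verbatim to complex analytic transversal families $\hat{\mathcal{H}}$, as announced in the remark after Proposition \ref{trans}: uniform $C^1$ limits of holomorphic maps are holomorphic, so the compactness argument in Proposition \ref{comp33} yields a complex analytic limit, and the hypothesis of \emph{complex} transversal empty interior of Theorem \ref{main2} feeds Proposition \ref{measureh} as required. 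Since $\Theta$ is closed, the corollary to Proposition \ref{generic} will then upgrade $(H)$ to the desired $\Gamma^k$-nullity.

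For $x\notin\Theta$ the constant family suffices, by closedness of $\Theta$, exactly as in Case I of Proposition \ref{mdense}. For $x\in\Theta$, write $\mathcal{M}(x)\in W^s_{\delta_5}(G)$ with $G\in\Omega$. Since $D_x\mathcal{M}$ is $\mathbb{C}$-linear with dense image and $n:=\dim_{\mathbb{C}}E^u_G<\infty$, I pick $v_1,\dots,v_n\in\mathcal{B}_2$ so that $\{\pi^u_G D_x\mathcal{M}\cdot v_j\}_{j\le n}$ is a $\mathbb{C}$-basis of $E^u_G$ and the complex span of $\{D_x\mathcal{M}\cdot v_j\}$ is contained in $C^u_{\frac{\Cr{3}}{2}\epsilon,0}(G)$. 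Then $Q(z,\lambda):=\mathcal{M}(x+z+\sum_j\lambda_j v_j)$ is holomorphic near $(0,0)\in\mathcal{B}_2\times\mathbb{C}^n$; the holomorphic Inverse Function Theorem produces a complex analytic local inverse $S$ to $\tilde Q(z,\lambda)=(z,\pi^u_G Q(z,\lambda)-\pi^u_G G)$, so
$$\beta_z(t):=\pi^h_G Q(S(z,t))-\pi^h_G G + G$$
defines a complex analytic family of graphs over $E^u_G$. Shrinking the neighborhoods, $\beta_z$ satisfies A1--A2; the $C^2$ bound A3 is automatic by Cauchy estimates, so the complex analytic version of Corollary \ref{a1a2} gives $m_{\beta_z}(W^s_{\delta_5}(\Omega))=0$.

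For the $\Gamma^\omega(\mathcal{B}_2)$-null conclusion I set $\alpha(\lambda):=x+\gamma\sum_j\lambda_j v_j$ on $\overline{\mathbb{D}}^n$ for small $\gamma>0$, a holomorphic element of $\Gamma^\omega_n(\mathcal{B}_2)$, and the same change of variables used in Proposition \ref{mdense} (now through the biholomorphism $t\mapsto S(z,t)$) transfers the vanishing of $m_{\beta_z}$ to $m_{\alpha_z}(\Theta)=0$ for all sufficiently small $|z|$, which is $(H)$. For the second conclusion I keep the same complex vectors $v_j$ but parametrize by $2n$ \emph{real} variables via $\alpha(\mu,\nu):=x+\gamma\sum_j(\mu_j+i\nu_j)v_j$; this $\alpha$ is complex affine in the real parameters, hence extends to the complexification of $\mathcal{B}_2$ and belongs to $\Gamma^k(\mathcal{B}_2)$ for every $k\in\mathbb{N}\cup\{\infty,\omega_\mathbb{R}\}$, while its image still lies inside the same complex analytic submanifold carrying $\beta_z$. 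The same chain of equalities forces $m_{\alpha_z}(\Theta)=0$, verifying $(H)$ in the real setting as well.

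The one genuinely new point compared to Proposition \ref{mdense} is to guarantee that \emph{every} transversal family encountered along the proof is honestly complex analytic, since Theorem \ref{main2} only supplies the transversal empty interior property against complex analytic $n$-manifolds. This is exactly what forces the use of a $\mathbb{C}$-basis of $E^u_G$ in the image of $D_x\mathcal{M}$ and of the holomorphic, rather than real, Inverse Function Theorem to build $\beta_z$; once this is in place no new estimates are needed beyond reading Sections \ref{action_t}--\ref{dballs} in the complex analytic category.
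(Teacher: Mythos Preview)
Your proposal is correct and follows essentially the same route as the paper: verify hypothesis $(H)$ of Proposition \ref{generic} via a complex affine $\alpha$ on $\overline{\mathbb{D}}^n$ for $k=\omega$, and then, for the real conclusions, reparametrize the same complex affine map by $2n$ real coordinates $(\mu_j,\nu_j)\mapsto \mu_j+i\nu_j$ and extend to the complexification of $\mathcal{B}_2$. Your added remarks on why the pipeline Proposition \ref{comp33}--Corollary \ref{a1a2} persists in the holomorphic category, and why a $\mathbb{C}$-basis together with the holomorphic Inverse Function Theorem is needed to keep $\beta_z$ complex analytic, make explicit what the paper leaves implicit; the only cosmetic differences are that the paper records a harmless $1/\sqrt{2}$ rescaling to land in $[-1,1]^{2n}$, and that Corollary \ref{a1a2} already requires only A1--A2, so your appeal to Cauchy estimates for A3 is superfluous.
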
 
\begin{proof} We can easily replace $[-1,1]$ by $\overline{\mathbb{D}}$  in the proof of  Proposition \ref{mdense} and construct for each $x \in \mathcal{B}_2$ a complex affine function $\alpha \in  \Gamma^\omega(\mathcal{B}_2)$ that satisfies the assumption $(H)$ in Proposition \ref{generic} for $k=\omega$. Considering $\mathcal{B}_2$ as a {\it real } Banach space we can see  $\alpha$ as a real affine transformation
$$\alpha\colon \mathbb{R}^{2n}\rightarrow \mathcal{B}_2$$
defined by
$$\alpha(\lambda_1^a,\lambda_1^b, \dots,\lambda_n^a, \lambda_n^b)= x +z+ \sum_m \gamma (\lambda_m^a+ i \lambda_m^b) v_m.$$
If $\mathcal{B}_{\mathbb{C}}$ is a Banach space complexification  of $\mathcal{B}_2$, every point $w  \in \mathcal{B}_{\mathbb{C}}$ is  a pair $(u,v) \in \mathcal{B}^2_2$, and $\mathcal{B}_{\mathbb{C}}$ is endowed  with the obvious sum and the scalar multiplication
$$(\lambda^a+ i \lambda^b)(u,v)= (\lambda^a u - \lambda^b v, \lambda^a v + \lambda^b u)$$
for every $\lambda^a, \lambda^b \in \mathbb{R}$. We identify $u \in \mathcal{B}_2$ with $(u,0) \in \mathcal{B}_{\mathbb{C}}$. We can extend $\alpha$ to an affine complex map 
$$\alpha_z\colon \mathbb{C}^{2n}\rightarrow  \mathcal{B}_{\mathbb{C}}$$ 
taking $\lambda_m^a,\lambda_m^b \in \mathbb{C}$ and defining 
$$\alpha(\lambda_1^a,\lambda_1^b, \dots,\lambda_n^a, \lambda_n^b)= (x,0) + \sum_m \gamma (\lambda_m^a+ i \lambda_m^b) (v_m,0).$$
 Note that
$$m\{ w  \in \mathbb{R}^{2n}\colon \alpha_z(w) \in  \mathcal{M}^{-1}W^s_{\delta_5}(\Omega), \ and \ |w_i| < \frac{1}{\sqrt{2}}  \}=0.$$
for $z$ small enough and $\alpha_z(w)= z+\alpha(w).$   Define  the affine complex map $$\beta\colon \overline{\mathbb{D}}^{2n}\mapsto \mathcal{B}_\mathbb{C}$$ as $\beta(w)=\alpha(w/\sqrt{2}).$ Then  $\beta \in \Gamma^{k}(\mathcal{B}_2)$ for every $k \in \mathbb{N}\cup\{\infty, \omega_\mathbb{R}\}$, $\beta(0)=x$  and 
$$m\{ w  \in [-1,1]^{2n} \colon \beta_z(w) \in  \mathcal{M}^{-1}W^s_{\delta_5}(\Omega)\}=0,$$
for every $z$ small. So for each $x \in \mathcal{B}_2$ the corresponding $\beta$ satisfies the assumption $(H)$ in Proposition \ref{generic} for every  $k \in \mathbb{N}\cup\{\infty, \omega_\mathbb{R}\}$. This complete the proof. 
\end{proof} 

\begin{proof}[Proof of Theorems \ref{main} and \ref{main2}] Since a countable union of $\Gamma^k(\mathcal{B}_0)$-null sets is a $\Gamma^k(\mathcal{B}_0)$-null set, it is enough to prove that $\mathcal{R}^{-i}W^s_{\delta_5}(\Omega)$ is a $\Gamma^k(\mathcal{B}_0)$-null set for every $i \geq 0$. This follows immediately from Proposition \ref{mdense} and  Proposition \ref{mdense2} taking $\mathcal{M}=\mathcal{R}^i$. 
\end{proof}

\section{The shadow  is  shy.}

Note that if a set $\Theta$ satisfies the assumptions of Proposition \ref{generic} then $\Theta$ is a  locally shy set in the sense of Hunt, Sauer and Yorke \cite{hunt}, and a shy set if $\mathcal{B}$ is separable. Shy sets are the same as  Haar null sets in abelian polish groups as defined by Christensen \cite{haarnull} in the case of separable Banach spaces. Consequently if $\mathcal{B}$ is separable and  under the assumptions of either Proposition \ref{mdense}, Proposition \ref{mdense2}, Theorem \ref{main} or Theorem \ref{main2} the sets under consideration are also shy sets (Haar null sets). 

\section{Curves transversal to a  global stable lamination.}

The following results  are  useful to prove that  {\it specific } families $\gamma$ satisfy $$m_\gamma(\cup_{i\geq 0}\mathcal{R}^{-i}W^s_{\delta_5}(\Omega))=0.$$

\begin{prop}\label{transtable} Let $\mathcal{R}$, $\Omega$, $\mathcal{B}_0$ be as in  Theorem \ref{main} and 
$$\mathcal{M}_i\colon P_i \rightarrow \mathcal{B}_0,$$
where   $i \in \Lambda \subset \mathbb{N}$ and $P_i$ are  open subsets of a Banach space $\mathcal{B}_2$, be $C^2$ maps.  Let $\gamma \in \Gamma^2(\mathcal{B}_2)$ and  
$K_i$, $i \in \Lambda$,  be the subset of parameters $\lambda \in (-1,1)^n$  such that 
\begin{itemize} 
\item[i.] We have that $\gamma(\lambda) \in P_i$,
\item[ii.] We have $\mathcal{M}_i\circ \gamma(\lambda) \in W^s_{\delta_5}(G)$ for some $G \in \Omega$,
\item[iii.]  The derivative $D_\lambda ( \mathcal{M}_i\circ \gamma)$   is injective   and 
\item[iv.]  The image of $D_\lambda (\mathcal{M}_i\circ \gamma)$ is contained in $C^u_{\frac{\Cr{3}}{2}  \epsilon, 0}(G)$. 
\end{itemize}
Then $$m(\cup_{i\in \Lambda} K_i)=0.$$ The same holds if we replace the assumptions of Theorem \ref{main}   by the assumptions of  Theorem \ref{main2}, $(-1,1)^n$ by $\mathbb{D}^n$ and $\gamma \in \Gamma^\omega(\mathcal{B}_2)$.
\end{prop}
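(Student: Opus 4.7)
The strategy is to localize and reduce directly to Corollary \ref{a1a2}. Since a countable union of Lebesgue null sets is null, it suffices to prove $m(K_i)=0$ for each fixed $i\in\Lambda$. Fix such an $i$ and a point $\lambda_0\in K_i$, and choose $G\in\Omega$ realizing conditions (ii) and (iv). The plan is to construct, on a neighborhood $U_{\lambda_0}$ of $\lambda_0$, a $C^2$ transversal family $\mathcal{H}$ whose graph is precisely $\mathcal{M}_i\circ\gamma(U_{\lambda_0})$ and which satisfies A1--A2 from Section \ref{dballs}, then invoke Corollary \ref{a1a2}.

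By the cone condition (iv) and the injectivity (iii), every nonzero vector $w\in\mathrm{Im}\,D_{\lambda_0}(\mathcal{M}_i\circ\gamma)$ satisfies $|\pi^h_G(w)|_{G,0}\leq\tfrac{\Cr{3}}{2}\epsilon|\pi^u_G(w)|_{G,0}$, so $\pi^u_G(w)\neq 0$; hence $\pi^u_G\circ D_{\lambda_0}(\mathcal{M}_i\circ\gamma)\colon\mathbb{R}^n\to E^u_G$ is a linear isomorphism (both spaces having dimension $n$). The inverse function theorem yields an open $U_{\lambda_0}\subset(-1,1)^n$ on which
$$\phi(\lambda):=\pi^u_G(\mathcal{M}_i\circ\gamma(\lambda)-G)$$
is a $C^2$ diffeomorphism onto an open set $W\subset E^u_G$. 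Define
$$\mathcal{H}\colon W\to E^h_G+G,\qquad \mathcal{H}(x)=\pi^h_G\bigl(\mathcal{M}_i\circ\gamma(\phi^{-1}(x))-G\bigr)+G,$$
so that $\hat{\mathcal{H}}=\mathcal{M}_i\circ\gamma(U_{\lambda_0})$ with base point $F_0=\mathcal{M}_i\circ\gamma(\lambda_0)$. For $v\in E^u_G$, set $w=D_{\phi^{-1}(x)}(\mathcal{M}_i\circ\gamma)\cdot D_x\phi^{-1}\cdot v$; then $\pi^u_G(w)=v$ by the very definition of $\phi$. Because (iv) controls the cone at $\lambda_0$ by the \emph{strict} factor $\Cr{3}/2$, continuity of $D(\mathcal{M}_i\circ\gamma)$ lets us shrink $U_{\lambda_0}$ so that $w\in C^u_{\Cr{3}\epsilon,0}(G)$ for all $x\in W$. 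Consequently $|D_x\mathcal{H}\cdot v|_{G,0}=|\pi^h_G(w)|_{G,0}\leq\Cr{3}\epsilon|v|_{G,0}$, which is A1. Further shrinking of $U_{\lambda_0}$ makes the diameter term in A2 arbitrarily small while $|F_0-G|_0\leq\delta_5$, so A2 holds. Corollary \ref{a1a2} then gives $m_{\mathcal{H}}(W^s_{\delta_5}(\Omega)\cap\hat{\mathcal{H}})=0$.

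Finally, $\mathcal{M}_i\circ\gamma$ restricted to $U_{\lambda_0}$ is a $C^2$ diffeomorphism onto $\hat{\mathcal{H}}$ with the metric induced by $|\cdot|_0$, hence bi-Lipschitz on compact sub-neighborhoods; so preimages of $m_{\mathcal{H}}$-null sets are Lebesgue null in $U_{\lambda_0}$. Condition (ii) yields $K_i\cap U_{\lambda_0}\subset(\mathcal{M}_i\circ\gamma)^{-1}(W^s_{\delta_5}(\Omega))$, so $m(K_i\cap U_{\lambda_0})=0$. Extracting a countable subcover of $K_i$ from the cover $\{U_{\lambda_0}\}_{\lambda_0\in K_i}$ via the Lindel\"of property of $(-1,1)^n$ yields $m(K_i)=0$, completing the real case. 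The complex analytic setting of Theorem \ref{main2} goes through verbatim, using the holomorphic inverse function theorem and $\mathbb{D}^n$ in place of $(-1,1)^n$. The only delicate point in the argument is precisely the propagation of the cone condition (iv) from the single point $\lambda_0$ to a neighborhood, which is exactly what the strict factor $\Cr{3}/2$ versus $\Cr{3}$ in the hypothesis is designed to allow.
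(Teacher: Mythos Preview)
Your proof is correct and follows essentially the same approach as the paper: localize at a point $\lambda_0\in K_i$, use conditions (iii)--(iv) to see that $\pi^u_G\circ D_{\lambda_0}(\mathcal{M}_i\circ\gamma)$ is a linear isomorphism, apply the inverse function theorem to write $\mathcal{M}_i\circ\gamma$ locally as the graph of a $C^2$ function $\mathcal{H}$ satisfying A1--A2, and invoke Corollary~\ref{a1a2}. The only cosmetic difference is that the paper argues by contradiction using a Lebesgue density point of $K_i$, whereas you proceed directly and finish with a Lindel\"of countable subcover; these are interchangeable standard devices.
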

\begin{proof} Suppose that $m(K_i) > 0$ for some $i \in \Lambda$.  Let $\lambda_0 \in K_i$ be a Lesbegue  density point of $K_i$. Due iii.  and iv.  the derivative   $D_{\lambda_0} ( \pi_G^u \circ \mathcal{M}_i\circ \gamma)$ is injective so we   can use the same method in the proof of Proposition \ref{mdense} to show that there exists a  small neighboorhood $O$ of $\lambda_0$ in $(-1,1)^n$ such that $\mathcal{M}_i\circ \gamma (O)$ is the graph of a $C^2$ function $\mathcal{H}\colon U \mapsto E^h_G+G$, where $U$ is an open subset of $E^u_G$,  and there is a $C^2$ diffeomorphism $S\colon U \mapsto O$ such that $F+\mathcal{H}(F)= \mathcal{M}_i\circ \gamma (S(F))$. Because the image of $D_\lambda (\mathcal{M}_i\circ \gamma)$ is contained in $C^u_{\frac{\Cr{3}}{2}  \epsilon, 0}(G)$ we can reduce $O$ such that $\mathcal{H}$  satisfies conditions $A1$-$A2$ in Corollary  \ref{a1a2}. Since $\lambda_0$ is  a Lesbegue  density point of $K_i$ we conclude that $m_{\mathcal{H}}(W^s_{\delta_5}(G))> 0$. That  contradicts the conclusion of  Corollary \ref{a1a2}. A similar proof also works in the second case. 
\end{proof}

An important case is when we are under the assumptions of Theorem \ref{main}, $\Omega$ is hyperbolic and its stable  directions are the horizontal directions $E^h$. Suppose that 
$$\mathcal{M}\colon P\rightarrow \mathcal{B}_0$$
is  a $C^2$ map, where $P$ is an open subset of a Banach space $\mathcal{B}_2$. For every $G \in \Omega$ define the set 
$$W^{st}(\mathcal{M},G) =\{F \in P \colon \lim_{i} | \mathcal{R}^i \circ \mathcal{M}(F)- \mathcal{R}^i G|_0=0    \}$$
and the {\it global stable  lamination }
$$W^{st}(\mathcal{M},\Omega)= \cup_{G\in \Omega} W^{st}(\mathcal{M},G).$$
Moreover suppose that $D_F \mathcal{R}^i\circ \mathcal{M}$, $i \in \mathbb{N}$,  is injective and it has  dense image for every 
$$F \in W^{st}(\mathcal{M},\Omega).$$
For each $F \in W^{st}(\mathcal{M},G)$ define the subspace 
$$E^h_F= \{ v \in \mathcal{B}_2 \colon \sup_i |D_F (\mathcal{R}^i \circ \mathcal{M})\cdot v|_0 < \infty \}.$$
Note that 
\begin{itemize}
\item[i.] If $v \notin E^h_F$  then for every $\epsilon' > 0$  there exists $i_{(F,v)}$ such that  
$$D_F(\mathcal{R}^i \circ \mathcal{M})\cdot v \in C^u_{\epsilon',0}(\mathcal{R}^iG)$$
for every $i > i_{(F,v)}$,
\item[ii.] The subspace $E^h_F$ is closed,
\item[iii.] $codim \ E^h_F= n.$
\item[iv.] There exists $C\geq 0$ and $\lambda \in (0,1)$ such that 
$$|D_F(\mathcal{R}^i \circ \mathcal{M})\cdot v|_0\leq C\lambda^i |v|_0$$
for every $v \in E^h_F$. 
\end{itemize}
Indeed  (i) and (iv) follows from the hyperbolicity of $\Omega$. Property (ii) follows from (i), since $D_{\mathcal{R}^i \circ \mathcal{M}(F)} \mathcal{R}$ maps  $C^u_{\epsilon, 0}(\mathcal{R}^iG)\setminus \{0\}$  strictly inside  $$C^u_{\epsilon, 0}(\mathcal{R}^{i+1} G)\setminus\{0\}$$  for every large $i$ and it expands vectors in that cone. Since $D_F \mathcal{R}^i\circ \mathcal{M}$, $i \in \mathbb{N}$  is injective and it has  dense image, it is easy to prove (iii). 

\begin{cor} Under the above conditions we have   $m_{\gamma}(W^{st}(\mathcal{M},\Omega))=0$ for every $\gamma \in \Gamma^2_n(\mathcal{B}_0)$ such that $\gamma \pitchfork W^{st}(\mathcal{M}, \Omega)$, that is, if $\gamma(\lambda) \in  W^{st}(\mathcal{M}, \Omega)$ then $Im \ D_\lambda \gamma \pitchfork E^h_{\gamma(\lambda)}$. The same conclusion holds if we are under the conditions of Theorem \ref{main2} and $\gamma \in \Gamma^\omega_n(\mathcal{B}_0)$. \end{cor}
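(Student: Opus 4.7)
The plan is to reduce to Proposition \ref{transtable} applied to the countable family $\mathcal{M}_i := \mathcal{R}^i\circ \mathcal{M}$, $i\in\mathbb{N}$. Letting $K_i$ denote the sets specified by that proposition, the goal is to establish the inclusion $\gamma^{-1}(W^{st}(\mathcal{M},\Omega)) \subset \bigcup_i K_i$; since Proposition \ref{transtable} gives $m(\bigcup_i K_i)=0$, this yields $m_\gamma(W^{st}(\mathcal{M},\Omega))=0$.

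Fix a parameter $\lambda_0$ with $\gamma(\lambda_0) \in W^{st}(\mathcal{M},G)$ for some $G\in\Omega$. Condition (i) of Proposition \ref{transtable} is automatic once all the iterates are defined near $\gamma(\lambda_0)$, and condition (ii) holds for every large $i$ because $|\mathcal{R}^j\mathcal{M}\gamma(\lambda_0)-\mathcal{R}^j G|_0 \to 0$ forces $\mathcal{R}^i\mathcal{M}\gamma(\lambda_0)\in W^s_{\delta_5}(\mathcal{R}^i G)$ eventually. The hypothesis $\mathrm{Im}\,D_{\lambda_0}\gamma \pitchfork E^h_{\gamma(\lambda_0)}$, combined with $\mathrm{codim}\,E^h_{\gamma(\lambda_0)}=n$ and the parameter space having dimension $n$, forces $D_{\lambda_0}\gamma$ to be injective with image complementary to $E^h_{\gamma(\lambda_0)}$. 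Combined with the injectivity of $D_{\gamma(\lambda_0)}(\mathcal{R}^i\circ\mathcal{M})$ assumed for points in $W^{st}(\mathcal{M},\Omega)$, this gives condition (iii).

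Condition (iv), the cone condition at $\mathcal{R}^i G$, is the heart of the argument and I verify it vector by vector in the $n$-dimensional image $\mathrm{Im}\,D_{\lambda_0}\gamma$. The characterization $E^h_F=\{v:D_F\mathcal{M}\cdot v\in E^h_G\}$, which follows from property (iv) of $E^h_F$ stated in the excerpt together with the $E^u$--$E^h$ decomposition, combined with transversality, gives $\pi^u_G(D_{\gamma(\lambda_0)}\mathcal{M}\cdot v)\neq 0$ for every nonzero $v\in\mathrm{Im}\,D_{\lambda_0}\gamma$. Writing $D_{\gamma(\lambda_0)}\mathcal{M}\cdot v = w_u + w_h$ with $w_u\in E^u_G$, $w_h\in E^h_G$, the $D\mathcal{R}$-invariance of the splitting yields
\begin{equation*}
\pi^u_{\mathcal{R}^iG}\bigl(D_{\lambda_0}(\mathcal{M}_i\circ\gamma)\cdot v\bigr) = D\mathcal{R}^i\cdot w_u, \qquad \pi^h_{\mathcal{R}^iG}\bigl(D_{\lambda_0}(\mathcal{M}_i\circ\gamma)\cdot v\bigr) = D\mathcal{R}^i\cdot w_h,
\end{equation*}
with hyperbolic bounds $|D\mathcal{R}^i w_u|_0\geq\theta^i|w_u|_0$ and $|D\mathcal{R}^i w_h|_0\leq \tilde C\mu^i|w_h|_0$ for some $\mu<1$. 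Since $v\mapsto w_u$ is injective on the finite-dimensional space $\mathrm{Im}\,D_{\lambda_0}\gamma$, compactness of its unit sphere gives a uniform lower bound $|w_u|\geq c|v|$, while $|w_h|\leq \|D_{\gamma(\lambda_0)}\mathcal{M}\|\cdot|v|$, so the ratio $|D\mathcal{R}^i w_h|_0/|D\mathcal{R}^i w_u|_0$ is $O((\mu/\theta)^i)$ uniformly on the unit sphere. For $i$ large enough this ratio drops below $\tfrac{\Cr{3}}{2}\epsilon$, and by scale-invariance of the cone condition the same bound extends to every $v\in\mathrm{Im}\,D_{\lambda_0}\gamma$, giving (iv).

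The main technical point is the tension between transversality of $\gamma$ with the infinite-codimensional horizontal bundle $E^h$ and the cone condition at the deep iterate $\mathcal{R}^i G$: since $C^u_{\cdot,0}$ is scale-invariant but not convex, (iv) must be verified along every ray of $\mathrm{Im}\,D_{\lambda_0}\gamma$, and this is what the pointwise hyperbolic estimates above achieve, with the compactness of the unit sphere used only to ensure that a single index $i$ works for all directions at once. The complex analytic case is identical, using the $\overline{\mathbb{D}}^n$-version of Proposition \ref{transtable} together with $\gamma\in\Gamma^\omega_n(\mathcal{B}_0)$ and the same pointwise cone estimate.
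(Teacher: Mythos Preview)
Your overall strategy is exactly the paper's: set $\mathcal{M}_i=\mathcal{R}^i\circ\mathcal{M}$ and verify that each $\lambda_0$ with $\gamma(\lambda_0)\in W^{st}(\mathcal{M},\Omega)$ lies in some $K_i$, then invoke Proposition~\ref{transtable}. Your observation that a single $i$ must work for every direction in the $n$-dimensional image, and that this comes from compactness of the unit sphere, is a useful detail the paper leaves implicit.

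There is, however, a genuine technical error in your verification of condition~(iv). You write that ``the $D\mathcal{R}$-invariance of the splitting yields'' the displayed identities
\[
\pi^u_{\mathcal{R}^iG}\bigl(D_{\lambda_0}(\mathcal{M}_i\circ\gamma)\cdot v\bigr)=D\mathcal{R}^i\cdot w_u,\qquad
\pi^h_{\mathcal{R}^iG}\bigl(D_{\lambda_0}(\mathcal{M}_i\circ\gamma)\cdot v\bigr)=D\mathcal{R}^i\cdot w_h.
\]
But the derivative here is $D_{\mathcal{M}(\gamma(\lambda_0))}\mathcal{R}^i$, taken at a point that is only \emph{eventually} close to the orbit of $G$; the splitting $E^u_G\oplus E^h_G$ is invariant under $D_G\mathcal{R}$, not under $D_F\mathcal{R}$ for $F\neq G$. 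Consequently your exact identities are false, and so are the ``hyperbolic bounds'' $|D\mathcal{R}^i w_u|_0\geq\theta^i|w_u|_0$ and $|D\mathcal{R}^i w_h|_0\leq\tilde C\mu^i|w_h|_0$ as stated (they cannot be iterated because the images leave $E^u$ and $E^h$). For the same reason your characterization $E^h_F=\{v:D_F\mathcal{M}\cdot v\in E^h_G\}$ is not justified.

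The fix is to argue with \emph{cones} rather than with the exact splitting. First pass to a large index $j_0$ so that $\mathcal{R}^j\mathcal{M}(\gamma(\lambda_0))$ is $\delta_2$-close to $\mathcal{R}^jG$ for all $j\geq j_0$; then decompose $w_j:=D_{\gamma(\lambda_0)}(\mathcal{R}^j\circ\mathcal{M})\cdot v$ in $E^u_{\mathcal{R}^jG}\oplus E^h_{\mathcal{R}^jG}$ and use (\ref{incones2}) together with properties~(iii) and~(iv) of Section~\ref{adapted} to show that the ratio $|\pi^h w_j|/|\pi^u w_j|$ contracts by a fixed factor at each step once $\pi^u w_j\neq 0$. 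This is precisely the content of the paper's property~(i) listed just before the corollary (which the paper asserts ``follows from the hyperbolicity of $\Omega$'' without further detail). Your compactness step then upgrades the pointwise index $i_{(F,v)}$ to a uniform one, completing the argument.
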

\begin{proof} If $\gamma(\lambda_0)=F \in W^{st}(\mathcal{M},G)$ for some $G \in \Omega$  we have that for $i$ large enough $\mathcal{R}^i\circ \mathcal{M}(F) \in W_{\delta_5}^s(\mathcal{R}^i(G))$, 
$$Im \ D_{\lambda_0}(\mathcal{R}^i  \circ \mathcal{M} \circ \gamma) \subset C^u_{\frac{\Cr{3}}{2}  \epsilon, 0}(\mathcal{R}^i(G)),$$
and moreover 
$$D_{\lambda_0}(\pi^u_{\mathcal{R}^iG} \circ \mathcal{R}^i  \circ \mathcal{M} \circ \gamma)$$
is injective.  Take $\mathcal{M}_i =\mathcal{R}^i\circ \mathcal{M}$ and apply Proposition \ref{transtable}. 
\end{proof}

\bibliographystyle{abbrv}

\end{document}